\documentclass{amsart}
\textwidth=14.5cm \oddsidemargin=1cm \evensidemargin=1cm
\usepackage{amsmath,amsthm,fixltx2e}
\usepackage{color}
\usepackage{amsxtra}
\usepackage{amscd}
\usepackage{amsfonts}
\usepackage{amssymb}
\usepackage{eucal}

\newtheorem{theorem}{Theorem}[section]
\newtheorem{cor}[theorem]{Corollary}
\newtheorem{lem}[theorem]{Lemma}
\newtheorem{prop}[theorem]{Proposition}

\newtheorem{thm}[theorem]{Theorem}

\newtheorem{rem}[theorem]{Remark}

\newtheorem{defn}[theorem]{Definition}

\newcommand{\nc}{\newcommand}

\nc\ol{\overline} \nc\ul{\underline} \nc\wt{\widetilde}
\nc{\z}{\zeta}

\nc{\ZZ}{{\mathbb Z}} \nc{\NN}{{\mathbb N}} \nc{\CC}{{\mathbb C}}
\nc{\QQ}{{\mathbb Q}} \nc{\CP}{{\mathbb {CP}}} \nc{\MM}{{\mathbb M}}

\nc{\F}{{\mathcal F}} \nc{\N}{{\mathcal N}} \nc{\Aa}{{\mathcal A}}
\nc{\E}{{\mathcal E}} \nc{\sS}{{\mathbb S}} \nc{\K}{{\mathcal K}}
\nc{\Ll}{{\mathcal L}} \nc{\Y}{{\mathcal Y}} \nc{\SSS}{{\mathcal S}}
\nc{\U}{{\mathcal U}} \nc\D{{\mathfrak D}}  \nc\dd{{\mathfrak d}}
\nc{\A}{{\mathcal A}} \nc\M{{\mathfrak M}} \nc{\TT}{{\mathbb T}}
\nc{\CCD}{{\mathcal D}}

\newcommand\q{\mathfrak q}

\newcommand{\gl}{\mathfrak{gl}}
\newcommand{\ssl}{\mathfrak{sl}}
\newcommand{\g}{\mathfrak{g}}
\newcommand{\Sym}{\mathrm{Sym}}

\newcommand{\tr}{\mathrm{tr}}

\newcommand\pp{\mathbf p}
\newcommand\uu{\mathbf u}
\newcommand\vv{\mathbf v}
\newcommand\ww{\mathbf w}
\newcommand\bol{\boldsymbol{\lambda}}
\newcommand\ch{\mathrm{ch}}
\newcommand\kk{\mathrm {k}}
\nc{\sW}{{\mathbb W}}

\nc{\iso}{{\stackrel{\sim}{\longrightarrow}}}

\begin{document}

\author{Mikhail Bershtein}
\address{M.B.: Landau Institute for Theoretical Physics, Chernogolovka, Russia
\newline
  Center for Advanced Studies, Skoltech, Moscow, Russia
\newline
  Institute for Information Transmission Problems,  Moscow, Russia
\newline
  National Research University Higher School of Economics, Moscow, Russia
\newline
 Independent University of Moscow, Moscow, Russia}
\email{mbersht@gmail.com}

\author[Alexander Tsymbaliuk]{Alexander Tsymbaliuk}
\address{A.T.: Yale University, Department of Mathematics, New Haven, CT 06511, USA}
\email{oleksandr.tsymbaliuk@yale.edu}

\title[Homomorphisms between quantum toroidal and affine Yangian algebras]
{Homomorphisms between different quantum toroidal and affine Yangian algebras}

\begin{abstract}
This paper concerns the relation between the quantum toroidal algebras and
the affine Yangians of $\ssl_n$, denoted by $\U^{(n)}_{q_1,q_2,q_3}$ and
$\Y^{(n)}_{h_1,h_2,h_3}$, respectively. Our motivation arises from the
milestone work~\cite{GTL}, where a similar relation between the quantum
loop algebra $U_q(L\g)$ and the Yangian $Y_h(\g)$ has been established
by constructing an isomorphism of $\CC[[\hbar]]$-algebras
  $\Phi\colon\widehat{U}_{\exp(\hbar)}(L\g)\iso \widehat{Y}_\hbar(\g)$
(with $\ \widehat{}\ $ standing for the appropriate completions).
These two completions model the behavior of the algebras in the formal neighborhood
of $h=0$. The same construction can be applied to the toroidal setting with
$q_i=\exp(\hbar_i)$ for $i=1,2,3$ (see~\cite{GTL,T}). In the current paper,
we are interested in the more general relation:
  $q_1=\omega_{mn}e^{h_1/m}, q_2=e^{h_2/m}, q_3=\omega_{mn}^{-1}e^{h_3/m}$,
where $m,n\geq 1$ and $\omega_{mn}$ is an $mn$-th root of $1$.
Assuming $\omega_{mn}^m$ is a primitive $n$-th root of unity, we construct
a homomorphism $\Phi^{\omega_{mn}}_{m,n}$ between the completions of the
formal versions of $\U^{(m)}_{q_1,q_2,q_3}$ and $\Y^{(mn)}_{h_1/mn,h_2/mn,h_3/mn}$.
\end{abstract}

\maketitle


\section*{Introduction}

Given a simple Lie algebra $\g$, one can associate to it two interesting Hopf algebras:
the quantum loop algebra $U_q(L\g)$ and the Yangian $Y_h(\g)$.
Their \emph{classical limits}, corresponding to the limits $q\to 1$ or $h\to 0$, recover
the universal enveloping algebras $U(\g[z,z^{-1}])$ and $U(\g[w])$, respectively.
The representation theories of $U_q(L\g)$ and $Y_h(\g)$ have a lot of common features:

-the descriptions of finite dimensional simple representations involve \emph{Drinfeld polynomials},

-these algebras act on the equivariant $K$-theories/cohomologies of Nakajima quiver varieties.

\noindent
However, there was no explicit justification for that until the recent construction
from~\cite{GTL} (also cf.~\cite[Section 5]{G}). In~\cite{GTL}, the authors construct
a $\CC[[\hbar]]$-algebra isomorphism
  $$\Phi\colon\widehat{U}_{e^{\hbar}}(L\g)\iso \widehat{Y}_\hbar(\g)$$
of the appropriately completed formal versions of these algebras.
Taking the limit $h\to 0$ corresponds to factoring by $(\hbar)$ in the formal setting.
The \emph{classical limit} of the above isomorphism is induced by
  $\underset{\longleftarrow}\lim\ \CC[z,z^{-1}]/(z-1)^r\iso
   \underset{\longleftarrow}\lim\ \CC[w]/(w)^r\simeq \CC[[w]]\
   \mathrm{with}\ z^{\pm 1}\mapsto e^{\pm w}.$

\medskip
In the current paper, we generalize this construction to the case of the quantum
toroidal algebras and the affine Yangians of $\ssl_n$ and $\gl_1$. To make our
notations uniform, we use $\U^{(n)}_{q_1,q_2,q_3}$ to denote the quantum toroidal
algebra of $\ssl_n$ (if $n\geq 2$) and of $\gl_1$ (if $n=1$). This algebra depends
on three nonzero parameters $q_1,q_2,q_3$ such that $q_1q_2q_3=1$. We also use
$\Y^{(n)}_{h_1,h_2,h_3}$ to denote the affine Yangian of $\ssl_n$ (if $n\geq 2$)
and of $\gl_1$ (if $n=1$). This algebra depends on three parameters $h_1,h_2,h_3$
such that $h_1+h_2+h_3=0$. For $n\geq 2$, these algebras were introduced long time
ago by~\cite{GKV,G}\footnote{\ Actually, we will need to modify slightly their construction
in the $n=2$ case.}. However, the quantum toroidal algebra and the affine Yangian of
$\gl_1$ appeared only recently in the works of different people, see~\cite{M,FT,SV1,MO,SV2,T}.

The main result of this paper, Theorem~\ref{main 1}, provides a homomorphism
  $$\Phi^{\omega_{mn}}_{m,n}\colon\widehat{\U}^{(m),\omega_{mn}}_{\hbar_1,\hbar_2}
    \longrightarrow \widehat{\Y}^{(mn)}_{\hbar_1,\hbar_2}$$
from the completion of the \emph{formal version} of $\U^{(m)}_{q_1,q_2,q_3}$ to
the completion of the \emph{formal version} of $\Y^{(mn)}_{h_1,h_2,h_3}$.
Formal versions mean that we consider these algebras over the ring $\CC[[\hbar_1,\hbar_2]]$ with
  $$h_1=\hbar_1/mn,\ h_2=\hbar_2/mn,\ h_3=\hbar_3/mn\ \ \mathrm{and}\ \
    q_1=\omega_{mn}e^{\frac{\hbar_1}{m}},\ q_2=e^{\frac{\hbar_2}{m}},\
    q_3=\omega_{mn}^{-1}e^{\frac{\hbar_3}{m}},$$
where $\hbar_3=-\hbar_1-\hbar_2$ and $\omega_N\in \CC^\times$ is an $N$-th root
of unity. For $n=1=\omega_{mn}$, we recover an analogue of the homomorphism $\Phi$
applied in the toroidal setting (see~\cite{T} for $m=n=\omega_{mn}=1$). In contrast
to~\cite{GTL,T}, our new feature is that we construct homomorphisms between formal
versions of quantum and Yangian algebras corresponding to different Lie algebras.
Another difference is that $q_1$ is in the formal neighborhood of a root of unity,
not necessarily equal to 1.

The structures of formulas for $\Phi^{\omega_{mn}}_{m,n}$ are similar to those in~\cite{GTL}.
Let $\{e_{i,k}, f_{i,k},h_{i,k}\}_{0\leq i\leq m-1}^{k\in \ZZ}$ be the generators of
$\U^{(m),\omega_{mn}}_{\hbar_1,\hbar_2}$ and
$\{x^\pm_{i',r},\xi_{i',r}\}_{0\leq i'\leq mn-1}^{r\in \NN}$ be the generators of
$\Y^{(mn)}_{\hbar_1,\hbar_2}$. Let $\Y^{(mn),0}_{h_1,h_2,h_3}\subset \Y^{(mn)}_{h_1,h_2,h_3}$
be the subalgebra generated by $\xi_{i',r}$. Then, we have:
\begin{equation*}
  \Phi^{\omega_{mn}}_{m,n}(h_{i,k})\in \widehat{\Y}^{(mn),0}_{\hbar_1,\hbar_2},\
  \Phi^{\omega_{mn}}_{m,n}(e_{i,k})=
  \sum_{i'\underset{m}\equiv i}^{0\leq i'<mn}\sum_{r=0}^\infty g^{(k)}_{i',r}x^+_{i',r},\
  \Phi^{\omega_{mn}}_{m,n}(f_{i,k})=
  \sum_{i'\underset{m}\equiv i}^{0\leq i'<mn}\sum_{r=0}^\infty g^{(k)}_{i',r}x^-_{i',r}
\end{equation*}
for certain $g^{(k)}_{i',r}\in \widehat{\Y}^{(mn),0}_{\hbar_1,\hbar_2}$-- the completion
of $\Y^{(mn),0}_{\hbar_1,\hbar_2}$ with respect to the natural $\NN$-grading. These
formulas as well as explicit formulas for $g^{(k)}_{i',r}$ were found following
the arguments of~\cite{GTL} as well as understanding the \emph{classical limit} first
(see Theorem~\ref{Upsilon_0} and Proposition~\ref{limit_Phi}). However, in contrast
to~\cite{GTL}, we are not aware of the direct proof of the compatibility of this assignment
with the \emph{Serre relations}. Instead, we propose two indirect proofs. In the first one,
we construct an isomorphism between faithful representations of the algebras in the question,
compatible with the defining formulas for $\Phi^{\omega_{mn}}_{m,n}$. In the second one,
we utilize the shuffle approach.

Our motivation partially comes from~\cite{BBT}, where a 4d AGT relation on the
ALE space $X_n$ (minimal resolution of $A_{n-1}$ singularity $\CC^2/\ZZ_n$) was studied.
The main tool in~\cite{BBT} was the limit of $K$-theoretic (5 dimensional) AGT relation
on $\CC^2$, where $q_1\rightarrow \omega_n, q_2 \rightarrow 1$. Recall that the quantum
toroidal algebra $\U^{(1)}_{q_1,q_2,q_3}$ acts on the equivariant $K$-theory of the
moduli spaces of torsion free sheaves on $\CC^2$, while the affine Yangian
$\Y^{(n)}_{h_1,h_2,h_3}$ acts on the equivariant cohomologies of the moduli spaces
of torsion free sheaves on $X_n$. Therefore, it was conjectured in~\cite{BBT} that
the limit of $\U^{(1)}_{q_1,q_2,q_3}$ as $q_1\rightarrow \omega_n, q_2\rightarrow 1$
should be related to the affine Yangian of $\ssl_n$. The $m=1$ case of our
Theorem~\ref{main 1} can be viewed as a precise formulation of this idea.
We also refer an interested reader to~\cite{K} for the related work.

This paper is organized as follows:

$\bullet$
In Section~\ref{section basic definitions}, we recall the definition of the quantum
toroidal algebra $\U^{(n)}_{q_1,q_2,q_3}$ and the affine Yangian $\Y^{(n)}_{h_1,h_2,h_3}$
of $\ssl_n$ (if $n\geq 2$) and $\gl_1$ (if $n=1$). They depend on $n\in \ZZ_{>0}$ and
continuous parameters $q_1,q_2,q_3\in \CC^\times$ or $h_1,h_2,h_3\in \CC$ satisfying
$q_1q_2q_3=1$ and $h_1+h_2+h_3=0$. We also explain the way one can view the algebras
$\Y^{(n)}_{h_1,h_2,h_3}$ as \emph{additivizations} of $\U^{(n)}_{q_1,q_2,q_3}$.

We recall a family of Fock  $\U^{(n)}_{q_1,q_2,q_3}$-representations
$F^{p}(u)\ (p\in \ZZ/n\ZZ, u\in \CC^\times)$ from~\cite{FJMM1} and introduce a similar class
of Fock $\Y^{(n)}_{h_1,h_2,h_3}$-representations $^a F^{p}(v)\ (p\in \ZZ/n\ZZ, v\in \CC)$.

$\bullet$
In Section~\ref{section formal versions}, we introduce the formal versions of these algebras
and study their \emph{classical limits}. Let $\Y^{(n)}_{\hbar_1,\hbar_2}$ be an associative
algebra over $\CC[[\hbar_1,\hbar_2]]$ with the same collections of the generators and the
defining relations as for $\Y^{(n)}_{h_1,h_2,-h_1-h_2}$ with $h_1\rightsquigarrow \hbar_1/n$
and $h_2\rightsquigarrow \hbar_2/n$.

One can similarly define the formal versions of  $\U^{(m)}_{q_1,q_2,q_3}$,
but this heavily depends on the presentation of $q_1,q_2,q_3\in \CC[[\hbar_1,\hbar_2]]$.
In this paper, we are interested in the behavior of the algebras
$\U^{(m)}_{q_1,q_2,q_3}$ and $\Y^{(n)}_{h_1,h_2,h_3}$ as
$q_1\to \omega_{N}, q_2\to 1, q_3\to \omega_{N}^{-1}$ and $h_1,h_2,h_3\to 0$, respectively.
Therefore, we will be mainly concerned with the following relation
between $\{h_s\}$ and $\{q_s\}$:
  $$q_1=\omega_{N}\cdot \exp(h_1/m),\ q_2=\exp(h_2/m),\ q_3=\omega_{N}^{-1}\exp(h_3/m).$$
The formal version of the corresponding $\U^{(m)}_{q_1,q_2,q_3}$
will be denoted by $\U^{(m),\omega_N}_{\hbar_1,\hbar_2}$.

Taking the limit $h_2\to 0$ corresponds to factoring by $(\hbar_2)$ in the formal setting.
According to~\cite{T2}, the \emph{classical limits}
  $\U^{(m),\omega_{N}}_{\hbar_1}=\U^{(m),\omega_{N}}_{\hbar_1,\hbar_2}/(\hbar_2)$
and
  $\Y^{(n)}_{\hbar_1}=\Y^{(n)}_{\hbar_1,\hbar_2}/(\hbar_2)$
are closely related to the matrix algebras with values in the rings of
\emph{difference} or \emph{differential} operators on $\CC^\times$, respectively.
In Theorem~\ref{flatness}, we show that the algebras $\Y^{(n)}_{\hbar_1,\hbar_2}$ and
$\U^{(m),\omega_{N}}_{\hbar_1,\hbar_2}$ are flat $\CC[[\hbar_2]]$-deformations
of the corresponding limit algebras $\Y^{(n)}_{\hbar_1}$ and $\U^{(m),\omega_{N}}_{\hbar_1}$.
We also prove that the direct sum of all finite tensor products of Fock
modules (which are not \emph{in resonance}) for either $\Y^{(n)}_{\hbar_1,\hbar_2}$ or
$\U^{(m),\omega_{N}}_{\hbar_1,\hbar_2}$ form a faithful representation of the corresponding algebra.

$\bullet$
In Section~\ref{section main result}, we present the main result of this paper.
We construct the homomorphism
  $$\Phi^{\omega_{mn}}_{m,n}\colon\widehat{\U}^{(m),\omega_{mn}}_{\hbar_1,\hbar_2}
    \longrightarrow \widehat{\Y}^{(mn)}_{\hbar_1,\hbar_2}$$
for any $m,n\geq 1$ and an $mn$-th root of unity $\omega_{mn}=\exp(2\pi \mathrm{k}\mathbf{i}/mn)$
with $\mathrm{k}\in \ZZ,\ \gcd(\mathrm{k},n)=1$. We compute the \emph{classical limit} of
$\Phi^{\omega_{mn}}_{m,n}$ using the above identification of $\U^{(m),\omega_{mn}}_{\hbar_1}$
and $\Y^{(mn)}_{\hbar_1}$ with matrix algebras over the rings of difference or differential
operators on $\CC^\times$, see Theorem~\ref{Upsilon_0}. In Section~\ref{section partial proof},
following~\cite{GTL}, we provide a straightforward verification of the compatibility of
$\Phi^{\omega_{mn}}_{m,n}$ with all the defining relations, except the most complicated
\emph{Serre relations}, for which the argument of~\cite{GTL} fails. We propose two
alternatives proofs in Sections~\ref{section proof via reps},~\ref{section proof via shuffle}.

$\bullet$
In Section~\ref{section proof via reps}, we construct isomorphisms between tensor products
of the Fock modules for $\U^{(m),\omega_{mn}}_{\hbar_1,\hbar_2}$ and $\Y^{(mn)}_{\hbar_1,\hbar_2}$,
which are compatible with the defining formulas for $\Phi^{\omega_{mn}}_{m,n}$.
Combining this result with the faithfulness statement from Section~\ref{section formal versions},
we obtain a proof of Theorem~\ref{main 1}. In Section~\ref{section geometric interpretation},
we recall the geometric realization of tensor products of the Fock modules for the quantum
toroidal algebra and the affine Yangian of $\ssl_n$, and provide a geometric interpretation
of the aforementioned isomorphism of tensor products of the Fock modules.

$\bullet$
In Section~\ref{section proof via shuffle}, we recall the shuffle realization of the
\emph{positive halves} $\U^{(m),\omega_{mn},>}_{\hbar_1,\hbar_2}$ and
$\Y^{(mn),\geq}_{\hbar_1,\hbar_2}$ due to~\cite{Neg0, Neg1, Neg2}, see
Theorems~\ref{Negut theorem 1},~\ref{Negut theorem 2}.
In Theorem~\ref{shuffle homom}, we construct the homomorphism between the completions
of the corresponding shuffle algebras and show that it is compatible with the restriction
of $\Phi^{\omega_{mn}}_{m,n}$. This implies the compatibility of the latter with
the Serre relations, and therefore completes our direct proof of Theorem~\ref{main 1}
initiated in Section~\ref{section partial proof}.

\subsection*{Acknowledgments}
\

The authors are grateful to B.~Feigin, M.~Finkelberg, A.~Negut, N.~Nekrasov for their
comments and encouragement, and to the anonymous referee for many useful suggestions.
The authors would also like to thank the organizers of 2015 PCMI research program on the
Geometry of Moduli Spaces and Representation Theory, where part of the project was performed.

A.T. thanks the Max Planck Institute for Mathematics in Bonn for support and great working
conditions, in which the project was started. A.T. also gratefully acknowledges support from
the Simons Center for Geometry and Physics, Stony Brook University,
at which most of the research for this paper was performed,
as well as Yale University, where the final version of this paper was completed.

The work of A.T. was partially supported by the NSF Grants DMS--1502497, DMS--1821185.
M.B. acknowledges the financial support from  Russian Academic Excellence Project 5-100,
Young Russian Math Contest, Simons-IUM fellowship and RFBR grant mol\_a\_ved 15-32-20974.


\section{Basic definitions and constructions}\label{section basic definitions}

In this section, we introduce the key actors of this paper:
the quantum toroidal algebra and the affine Yangian of $\ssl_n$.
We also recall the Fock representations of these algebras.


\subsection{Quantum toroidal algebras of $\ssl_n\ (n\geq 2)$ and $\gl_1$}\label{section toroidal}
$\ $

The quantum toroidal algebras of $\ssl_n\ (n>2)$, depending on $q,d\in \CC^\times$,
were first introduced in~\cite{GKV}. The quantum toroidal algebra of $\gl_1$ was
introduced much later in the works of different people, see~\cite{M, FT, SV1}.
Finally, a similar definition of the quantum toroidal algebra of $\ssl_2$ was proposed
in~\cite{FJMM2}. To make our exposition shorter, we use the uniform notation
$\U^{(n)}_{q_1,q_2,q_3}$ for such algebras, where $n\in \ZZ_{>0}$ and
$q_1=d/q,q_2=q^2,q_3=1/dq$, so that $q_1q_2q_3=1$. This algebra coincides with
the quotient of the algebra $\mathcal{E}_n$ from~\cite{FJMM2} by $q^c=1$. Since
the former was called the \emph{quantum toroidal algebra of $\gl_n$} in \emph{loc. cit.},
we will refer to $\U^{(n)}_{q_1,q_2,q_3}$ as the \emph{quantum toroidal algebra of $\ssl_n$}
(see the above explanation for the cases of $n=1,2$).

For $n\in \ZZ_{>0}$, we set $[n]:=\{0,1,\ldots,n-1\}$ which will be viewed as a set
of mod $n$ residues. Let $A=(a_{i,j})_{i\in [n]}^{j\in [n]}$ be the Cartan matrix of type
$A_{n-1}^{(1)}$ for $n\geq 2$ and a zero matrix for $n=1$. Consider two more matrices
$(d_{i,j})_{i\in [n]}^{j\in [n]}$ and $(m_{i,j})_{i\in [n]}^{j\in [n]}$ defined by
  $$d_{i,j}:=
    \begin{cases}
      d^{\mp 1} & \text{if}\ \ j=i\pm 1\ \mathrm{and}\ n>2, \\
      -1 & \text{if}\ \ j\ne i\ \mathrm{and}\ n=2, \\
      1 &  \text{otherwise},
    \end{cases}
    \ \ \
    m_{i,j}:=
    \begin{cases}
      1 & \text{if}\ \ j=i-1\ \mathrm{and}\ n>2, \\
      -1 & \text{if}\ \ j=i+1\ \mathrm{and}\ n>2, \\
      0 & \text{otherwise}.
    \end{cases}$$
Finally, we define a collection of polynomials $\{g_{i,j}(z,w)\}_{i\in [n]}^{j\in [n]}$ as follows:
  $$g_{i,j}(z,w):=
    \begin{cases}
      z-q^{a_{i,j}}d^{-m_{i,j}}w & \text{if}\ \ n>2,\\
      z-q_2w & \text{if}\ \ n=2\ \mathrm{and}\ i=j, \\
      (z-q_1w)(z-q_3w) & \text{if}\ \ n=2\ \mathrm{and}\ i\ne j,\\
      (z-q_1w)(z-q_2w)(z-q_3w) & \text{if}\ \ n=1.
    \end{cases}$$


The algebra $\U^{(n)}_{q_1,q_2,q_3}$ is the unital associative $\CC$-algebra
generated by $\{e_{i,k}, f_{i,k}, \psi_{i,k}, \psi_{i,0}^{-1}\}_{i\in [n]}^{k\in \ZZ}$
with the defining relations (T0--T6) to be given below:
\begin{equation}\tag{T0} \label{T0}
  \psi_{i,0}\cdot \psi_{i,0}^{-1}=\psi_{i,0}^{-1}\cdot \psi_{i,0}=1,\
  [\psi_i^\pm(z),\psi_j^\pm(w)]=0,\ [\psi_i^+(z),\psi_j^-(w)]=0,
\end{equation}
\begin{equation}\tag{T1}\label{T1}
  [e_i(z),f_j(w)]=\frac{\delta_{i,j}}{q-q^{-1}}\cdot \delta(w/z)(\psi_i^+(w)-\psi_i^-(z)),
\end{equation}
\begin{equation}\tag{T2} \label{T2}
  d_{i,j}g_{i,j}(z,w)e_i(z)e_j(w)=-g_{j,i}(w,z)e_j(w)e_i(z),
\end{equation}
\begin{equation}\tag{T3}\label{T3}
  d_{j,i}g_{j,i}(w,z)f_i(z)f_j(w)=-g_{i,j}(z,w)f_j(w)f_i(z),
\end{equation}
\begin{equation}\tag{T4}\label{T4}
  d_{i,j}g_{i,j}(z,w)\psi_i^\pm(z)e_j(w)=-g_{j,i}(w,z)e_j(w)\psi_i^\pm(z),
\end{equation}
\begin{equation}\tag{T5}\label{T5}
  d_{j,i}g_{j,i}(w,z)\psi_i^{\pm}(z)f_j(w)=-g_{i,j}(z,w)f_j(w)\psi_i^\pm(z),
\end{equation}
where these generating series are defined as follows:
  $$e_i(z):=\sum_{k=-\infty}^{\infty}{e_{i,k}z^{-k}},\  \
    f_i(z):=\sum_{k=-\infty}^{\infty}{f_{i,k}z^{-k}},\  \
    \psi_i^{\pm}(z):=\psi_{i,0}^{\pm 1}+\sum_{r>0}{\psi_{i,\pm r}z^{\mp r}},\ \
    \delta(z):=\sum_{k=-\infty}^{\infty}{z^k}.$$

Let us now specify the \emph{Serre relations}~(T6) in each of the cases: $n>2,n=2, n=1$.
Set $[a,b]_x:=ab-x\cdot ba$, while $\underset{z_1,\ldots,z_r}\Sym$ will stand for the
symmetrization in $z_1,\ldots,z_r$.

\medskip
\noindent
$\bullet$ \emph{Case $n>2$}.
Then, we impose:
\begin{equation}\tag{T6}\label{T6.1}
\begin{split}
  & [e_i(z),e_j(w)]=0,\ [f_i(z),f_j(w)]=0\ \ \mathrm{if}\ a_{i,j}=0,\\
  & \underset{z_1,z_2}\Sym\ [e_i(z_1), [e_i(z_2),e_{i\pm 1}(w)]_q]_{q^{-1}}=0,\
  \underset{z_1,z_2}\Sym\ [f_i(z_1), [f_i(z_2), f_{i\pm 1}(w)]_q]_{q^{-1}}=0.
\end{split}
\end{equation}

\noindent
$\bullet$ \emph{Case $n=2$}.
Then, we impose
\begin{equation}\tag{T6}\label{T6.2}
\begin{split}
  & \underset{z_1,z_2,z_3}\Sym\ [e_i(z_1), [e_i(z_2),[e_i(z_3),e_{i+1}(w)]_{q^2}]]_{q^{-2}}=0,\\
  & \underset{z_1,z_2,z_3}\Sym\ [f_i(z_1), [f_i(z_2),[f_i(z_3),f_{i+1}(w)]_{q^2}]]_{q^{-2}}=0.
\end{split}
\end{equation}

\noindent
$\bullet$ \emph{Case $n=1$}.
Then, we impose
\begin{equation}\tag{T6}\label{T6.3}
   \underset{z_1,z_2,z_3}\Sym\ \frac{z_2}{z_3} [e_0(z_1),[e_0(z_2),e_0(z_3)]]=0,\
   \underset{z_1,z_2,z_3}\Sym\ \frac{z_2}{z_3} [f_0(z_1),[f_0(z_2),f_0(z_3)]]=0.
\end{equation}

\begin{rem}\label{isomorphism of different toroidal}
For any $n>1$ and $\omega_n=\sqrt[n]{1}\in \CC^\times$, there exists an algebra isomorphism
$\U^{(n)}_{q_1,q_2,q_3}\iso \U^{(n)}_{\omega_n\cdot q_1,q_2,\omega_n^{-1}\cdot q_3}$ given by
  $e_i(z)\mapsto e_i(\omega^{-i}_n z),
   f_i(z)\mapsto f_i(\omega^{-i}_n z),
   \psi^\pm_i(z)\mapsto \psi^\pm_i(\omega^{-i}_n z)$.
\end{rem}

It will be convenient to use the generators $\{h_{i,k}\}_{i\in [n]}^{k\in \ZZ\backslash\{0\}}$
instead of $\{\psi_{i,k}\}_{i\in [n]}^{k\in \ZZ\backslash\{0\}}$, defined by
  $$\exp\left(\pm(q-q^{-1})\sum_{r>0}h_{i,\pm r}z^{\mp r}\right)=\bar{\psi}_i^\pm(z):=
    \psi_{i,0}^{\mp 1}\psi^\pm_i(z),\ \
    h_{i,\pm r}\in \CC[\psi_{i,0}^{\mp 1},\psi_{i,\pm 1},\psi_{i,\pm2}, \ldots].$$
Then, the relations (T4,T5) are equivalent to the following
(we use notation $[m]_q:=\frac{q^m-q^{-m}}{q-q^{-1}}$):
\begin{equation}\tag{T4$'$}\label{T4'}
  \psi_{i,0}e_{j,l}=q^{a_{i,j}}e_{j,l}\psi_{i,0},\
  [h_{i, k}, e_{j,l}]=b_n(i,j;k)\cdot e_{j,l+k}\ \mathrm{for}\ k\ne 0,
\end{equation}
\begin{equation}\tag{T5$'$}\label{T5'}
  \psi_{i,0}f_{j,l}=q^{-a_{i,j}}f_{j,l}\psi_{i,0},\
  [h_{i, k},  f_{j,l}]=-b_n(i,j;k)\cdot f_{j,l+k}\ \mathrm{for}\ k\ne 0,
\end{equation}
where the constants $b_n(i,j;k)$ are given explicitly by:
\begin{equation}\label{sharp}
  b_n(i,j;k)=
  \begin{cases}
      \frac{[ka_{i,j}]_q}{k}\cdot d^{-km_{i,j}} & \text{if}\ \ n>2,\\
      \frac{[2k]_q}{k}\delta_{j,i} -\frac{[k]_q}{k}\cdot (d^k+d^{-k})\delta_{j,i+1} & \text{if}\ \ n=2,\\
      \frac{[k]_q}{k}\cdot (q^k+q^{-k}-d^k-d^{-k}) & \text{if} \ \ n=1.
    \end{cases}
\end{equation}


We equip the algebra $\U^{(n)}_{q_1,q_2,q_3}$ with the \emph{principal} $\ZZ$-grading by assigning
  $$\deg(e_{i,k})=1,\ \deg(f_{i,k})=-1,\ \deg(\psi_{i,k})=0
    \ \mathrm{for\ all}\ i\in [n], k\in \ZZ.$$
Following~\cite[Theorem 2.1]{DI}, we endow $\U^{(n)}_{q_1,q_2,q_3}$ with
a formal coproduct by assigning
\begin{equation}\label{coproduct 1}
\begin{split}
  & \Delta(e_i(z))=e_i(z)\otimes 1 + \psi^{-}_i(z)\otimes e_i(z),\
  \Delta(f_i(z))=f_i(z)\otimes \psi^{+}_i(z) +  1\otimes f_i(z),\\
  & \Delta(\psi_i^{\pm}(z))=\psi^\pm_i(z)\otimes \psi^\pm_i(z).
\end{split}
\end{equation}


\subsection{Affine Yangians of $\ssl_n\ (n\geq 2)$ and $\gl_1$}
$\ $

The affine Yangians of $\ssl_n\ (n>2)$, denoted by
  $\widehat{\textbf{Y}}_{\bar{\lambda},\bar{\beta}}\ (\bar{\lambda},\bar{\beta}\in \CC)$,
were first introduced in~\cite{G}. Their counterpart for $n=2$ is introduced below.
Finally, the affine Yangian of $\gl_1$ has recently appeared in the works of
Maulik-Okounkov~\cite{MO} and Schiffmann-Vasserot~\cite{SV2}. In the present paper,
we will need the \emph{loop presentation} of the latter algebra from~\cite{T}.

To make our exposition shorter, we call such algebras the \emph{affine Yangians of $\ssl_n$}
and use the uniform notation $\Y^{(n)}_{h_1,h_2,h_3}$ for them, where $n\in \ZZ_{>0}$ and
  $h_1=\beta-h,h_2=2h,h_3=-\beta-h\ \mathrm{with}\ \beta,h\in \CC$, so that $h_1+h_2+h_3=0$.
The algebra $\Y^{(n)}_{h_1,h_2,h_3}$ is the unital associative $\CC$-algebra generated by
$\{x^\pm_{i,r}, \xi_{i,r}\}_{i\in [n]}^{r\in \NN}$ with the defining relations (Y0--Y5)
to be given below.

The first two relations are independent of $n\in \ZZ_{>0}$:
\begin{equation}\tag{Y0}\label{Y0}
  [\xi_{i,r},\xi_{j,s}]=0,
\end{equation}
\begin{equation}\tag{Y1}\label{Y1}
  [x^+_{i,r},x^-_{j,s}]=\delta_{i,j}\cdot \xi_{i,r+s}.
\end{equation}

Let us now specify (Y2--Y5) in each of the cases: $n>2,n=2, n=1$. Set $\{a,b\}:=ab+ba$.

\medskip
\noindent
$\bullet$ \emph {Case $n>2$}. Then, we impose
\begin{equation}\tag{Y2}\label{Y2.1}
  [x^\pm_{i,r+1},x^\pm_{j,s}]-[x^\pm_{i,r},x^\pm_{j,s+1}]=
  -m_{i,j}\beta[x^\pm_{i,r},x^\pm_{j,s}]\pm a_{i,j}h \{x^\pm_{i,r},x^\pm_{j,s}\},
\end{equation}
\begin{equation}\tag{Y3}\label{Y3.1}
  [\xi_{i,r+1},x^\pm_{j,s}]-[\xi_{i,r},x^\pm_{j,s+1}]=
  -m_{i,j}\beta[\xi_{i,r},x^\pm_{j,s}]\pm a_{i,j}h \{\xi_{i,r},x^\pm_{j,s}\},
\end{equation}
\begin{equation}\tag{Y4}\label{Y4.1}
  [\xi_{i,0},x^\pm_{j,s}]=\pm a_{i,j} x^\pm_{j,s},
\end{equation}
\begin{equation}\tag{Y5}\label{Y5.1}
  \underset{r_1,r_2}\Sym\ [x^\pm_{i,r_1},[x^\pm_{i,r_2},x^\pm_{i\pm 1,s}]]=0\ \
  \mathrm{and}\ \ [x^\pm_{i,r},x^\pm_{j,s}]=0 \ \mathrm{if}\ a_{i,j}=0.
\end{equation}

\noindent
$\bullet$ \emph {Case $n=2$}. Then, we impose
\begin{equation}\tag{Y2.1}\label{Y2.1.2}
  [x^\pm_{i,r+1},x^\pm_{i,s}]-[x^\pm_{i,r},x^\pm_{i,s+1}]=\pm h_2\{x^\pm_{i,r},x^\pm_{i,s}\},
\end{equation}
\begin{equation}\tag{Y2.2}\label{Y2.2.2}
\begin{split}
  & [x^\pm_{i,r+2},x^\pm_{j,s}]-2[x^\pm_{i,r+1},x^\pm_{j,s+1}]+[x^\pm_{i,r},x^\pm_{j,s+2}]=\\
  & -h_1h_3[x^\pm_{i,r},x^\pm_{j,s}]\mp h_2(\{x^\pm_{i,r+1},x^\pm_{j,s}\}-\{x^\pm_{i,r},x^\pm_{j,s+1}\})\ \mathrm{for}\ j\ne i,
\end{split}
\end{equation}
\begin{equation}\tag{Y3.1}\label{Y3.1.2}
  [\xi_{i,r+1},x^\pm_{i,s}]-[\xi^\pm_{i,r},x^\pm_{i,s+1}]=\pm h_2\{\xi_{i,r},x^\pm_{i,s}\},
\end{equation}
\begin{equation}\tag{Y3.2}\label{Y3.2.2}
\begin{split}
  & [\xi_{i,r+2},x^\pm_{j,s}]-2[\xi_{i,r+1},x^\pm_{j,s+1}]+[\xi_{i,r},x^\pm_{j,s+2}]=\\
  & -h_1h_3[\xi_{i,r},x^\pm_{j,s}]\mp h_2(\{\xi_{i,r+1},x^\pm_{j,s}\}-\{\xi_{i,r},x^\pm_{j,s+1}\})\ \mathrm{for}\ j\ne i,
\end{split}
\end{equation}
\begin{equation}\tag{Y4}\label{Y4.2}
  [\xi_{i,0},x^\pm_{j,s}]=\pm a_{i,j} x^\pm_{j,s},\ \
  [\xi_{i,1},x^\pm_{i+1,s}]=\mp (2x^\pm_{i+1,s+1}+h_2\{\xi_{i,0},x^\pm_{i+1,s}\}),
\end{equation}
\begin{equation}\tag{Y5}\label{Y5.2}
  \underset{r_1,r_2,r_3}\Sym [x^\pm_{i,r_1},[x^\pm_{i,r_2},[x^\pm_{i,r_3},x^\pm_{i+1,s}]]]=0.
\end{equation}

\noindent
$\bullet$ \emph {Case $n=1$}. Then, we impose
\begin{equation}\tag{Y2}\label{Y2.3}
\begin{split}
  & [x^\pm_{0,r+3},x^\pm_{0,s}]-3[x^\pm_{0,r+2},x^\pm_{0,s+1}]+3[x^\pm_{0,r+1},x^\pm_{0,s+2}]-[x^\pm_{0,r},x^\pm_{0,s+3}]=\\
  & -\sigma_2([x^\pm_{0,r+1},x^\pm_{0,s}]-[x^\pm_{0,r},x^\pm_{0,s+1}]) \pm \sigma_3\{x^\pm_{0,r},x^\pm_{0,s}\},
\end{split}
\end{equation}
\begin{equation}\tag{Y3}\label{Y3.3}
\begin{split}
  & [\xi_{0,r+3},x^\pm_{0,s}]-3[\xi_{0,r+2},x^\pm_{0,s+1}]+3[\xi_{0,r+1},x^\pm_{0,s+2}]-[\xi_{0,r},x^\pm_{0,s+3}]=\\
  & -\sigma_2([\xi_{0,r+1},x^\pm_{0,s}]-[\xi_{0,r},x^\pm_{0,s+1}])\pm \sigma_3\{\xi_{0,r},x^\pm_{0,s}\},
\end{split}
\end{equation}
\begin{equation}\tag{Y4}\label{Y4.3}
  [\xi_{0,0},x^\pm_{0,s}]=0,\ \ [\xi_{0,1},x^\pm_{0,s}]=0,\ \ [\xi_{0,2},x^\pm_{0,s}]=\pm 2h_1h_3x^\pm_{0,s},
\end{equation}
\begin{equation}\tag{Y5}\label{Y5.3}
  \underset{r_1,r_2,r_3}\Sym [x^\pm_{0,r_1},[x^\pm_{0,r_2},x^\pm_{0,r_3+1}]]=0,
\end{equation}
where we set $\sigma_2:=h_1h_2+h_1h_3+h_2h_3,\ \sigma_3:=h_1h_2h_3$.

\begin{rem}
(a)  For $n>2$, the algebras $\Y^{(n)}_{h_1,h_2,h_3}$ coincide with those of~\cite{G}.
Explicitly, we have an isomorphism
 $\Y^{(n)}_{h_1,h_2,-h_1-h_2}\simeq \widehat{\bf{Y}}_{h_2,\frac{1}{2}h_2-\frac{n}{4}(2h_1+h_2)}$.

\noindent
(b) Our definition of $\Y^{(2)}_{h_1,h_2,h_3}$ coincides with the corrected version of
$Y_{-h_3,-h_1}(\widehat{\ssl}_2)$ from~\cite{K}.

\noindent
(c) Our definition of $\Y^{(1)}_{h_1,h_2,h_3}$ first appeared in~\cite{T} under the name
``the affine Yangian of $\gl_1$''.
\end{rem}


\subsection{Affine Yangians as \emph{additivizations} of quantum toroidal algebras}\label{section Yangian addit}
$\ $

The algebras $\Y^{(n)}_{h_1,h_2,h_3}$ can be considered as natural \emph{additivizations}
of the algebras $\U^{(n)}_{q_1,q_2,q_3}$ in the same way as $Y_h(\g)$ is an \emph{additivization}
of $U_q(L\g)$. We explain this by rewriting (Y0--Y5) in a form similar to the defining
relations (T0--T6). We also define an algebra $\CCD\Y^{(n)}_{h_1,h_2,h_3}$.

Let us introduce the generating series:
  $$x^\pm_i(z):=\sum_{r\geq 0}{x^\pm_{i,r}z^{-r-1}},\
    \xi_i(z):=1+h_2\sum_{r\geq 0}{\xi_{i,r}z^{-r-1}}.$$
We also define a collection of polynomials $\{p_{i,j}(z,w)\}_{i\in [n]}^{j\in [n]}$ as follows:
  $$p_{i,j}(z,w):=
    \begin{cases}
      z-w+m_{i,j}\beta-a_{i,j}h & \text{if}\ \ n>2,\\
      z-w-h_2 & \text{if}\ \ n=2\ \mathrm{and}\ i=j,\\
      (-1)^{\delta_{j,1}}(z-w-h_1)(z-w-h_3) & \text{if}\ \ n=2\ \mathrm{and}\ i\ne j,\\
      (z-w-h_1)(z-w-h_2)(z-w-h_3) & \text{if}\ \ n=1.
    \end{cases}$$


Let $\Y^{(n),<},\Y^{(n),0},\Y^{(n),>}$ be the subalgebras of $\Y^{(n)}_{h_1,h_2,h_3}$ generated by
$\{x^-_{i,r}\}_{i\in [n]}^{r\in \NN}, \{\xi_{i,r}\}_{i\in [n]}^{r\in \NN}$, and
$\{x^+_{i,r}\}_{i\in [n]}^{r\in \NN}$, respectively. Let $\Y^{(n),\geq}$ and $\Y^{(n),\leq}$ be
the subalgebras of $\Y^{(n)}_{h_1,h_2,h_3}$ generated by $\Y^{(n),0},\Y^{(n),>}$ and
$\Y^{(n),0},\Y^{(n),<}$, respectively. The following result is standard:

\begin{prop}\label{triangular}
(a) $\Y^{(n),0}$ is isomorphic to a polynomial algebra
in the generators $\{\xi_{i,r}\}_{i\in [n]}^{r\in \NN}$.

\noindent
(b) $\Y^{(n),\gtrless}$ are isomorphic to the algebras generated by
$\{x^\pm_{i,r}\}_{i\in [n]}^{r\in \NN}$ subject to~(Y2,Y5).

\noindent
(c) $\Y^{(n),\gtreqqless}$ are isomorphic to the algebras generated by
$\{\xi_{i,r},x^\pm_{i,r}\}_{i\in [n]}^{r\in \NN}$ subject to~(Y0,Y2--Y5).
\end{prop}

Consider the homomorphisms $\sigma_i^\pm\colon \Y^{(n),\gtreqqless}\to \Y^{(n),\gtreqqless}$
defined by $\xi_{j,r}\mapsto \xi_{j,r}, x^\pm_{j,r}\mapsto x^\pm_{j,r+\delta_{i,j}}$.
These are well-defined due to Proposition~\ref{triangular}(c). Let
  $\mu\colon \Y^{(n)}_{h_1,h_2,h_3}\otimes \Y^{(n)}_{h_1,h_2,h_3}\to \Y^{(n)}_{h_1,h_2,h_3}$
be the multiplication map. The following is straightforward:

\begin{prop}\label{yangian_generating}
(a) The relation (Y0) is equivalent to
 $[\xi_i(z),\xi_j(w)]=0.$

\noindent
(b) The relation (Y1) is equivalent to
$h_2\cdot(w-z)[x^+_i(z),x^-_j(w)]=\delta_{i,j}(\xi_i(z)-\xi_i(w)).$

\noindent
(c) The relations (Y3,Y4) are equivalent to
  $$p_{i,j}(z,\sigma^+_j)\xi_i(z)x^+_{j,s}=-p_{j,i}(\sigma^+_j,z)x^+_{j,s}\xi_i(z),\
    p_{j,i}(\sigma^-_j,z)\xi_i(z)x^-_{j,s}=-p_{i,j}(z,\sigma^-_j)x^-_{j,s}\xi_i(z).$$

\noindent
(d) The relation (Y2) is equivalent to
  $$\partial_z^{\deg_{i,j;n}} \mu\left(p_{i,j}(z,\sigma^{+,(2)}_j)x^+_i(z)\otimes x^+_{j,s} +
    p_{j,i}(\sigma^{+,(1)}_j,z)x^+_{j,s}\otimes x^+_i(z)\right)=0,$$
  $$\partial_z^{\deg_{i,j;n}} \mu\left(p_{j,i}(\sigma^{-,(2)}_j,z)x^-_i(z)\otimes x^-_{j,s} +
    p_{i,j}(z,\sigma^{-,(1)}_j)x^-_{j,s}\otimes x^-_i(z)\right)=0,$$
where we set
 $\deg_{i,j;n}:=\deg(p_{i,j}(z,w)),
  \sigma_j^{\pm, (1)}(a\otimes b):=\sigma_j^\pm(a)\otimes b,
  \sigma_j^{\pm, (2)}(a\otimes b):=a\otimes \sigma_j^\pm(b)$.
\end{prop}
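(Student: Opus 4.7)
The plan is to prove each of the four equivalences by expanding the generating series in $z$ (and $w$) and matching coefficients against the defining relations (Y0)--(Y5). Throughout I use $\xi_i(z)=1+h_2\sum_{r\geq 0}\xi_{i,r}z^{-r-1}$, $x^\pm_i(z)=\sum_{r\geq 0}x^\pm_{i,r}z^{-r-1}$, and the action of $\sigma^\pm_j$, which fixes all $\xi$-generators as well as all $x^\pm_{i,k}$ with $i\neq j$ and sends $x^\pm_{j,k}\mapsto x^\pm_{j,k+1}$.

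Part (a) is tautological, since $[\xi_i(z),\xi_j(w)]=h_2^2\sum_{r,s\geq 0}[\xi_{i,r},\xi_{j,s}]z^{-r-1}w^{-s-1}$ encodes (Y0) coefficient by coefficient. For part (b) I substitute (Y1) into $h_2(w-z)[x^+_i(z),x^-_j(w)]$; the resulting inner sum over $r+s=N$ telescopes to $z^{-N-1}-w^{-N-1}$, so the whole expression equals $\delta_{i,j}(\xi_i(z)-\xi_i(w))$. The reverse direction follows by extracting the coefficient of $z^{-r-1}w^{-s-1}$.

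For part (c), I compute in the case $n>2$ that $p_{i,j}(z,\sigma^+_j)\xi_i(z)x^+_{j,s}+p_{j,i}(\sigma^+_j,z)x^+_{j,s}\xi_i(z)$ rearranges, using $m_{j,i}=-m_{i,j}$ and $a_{j,i}=a_{i,j}$, to
\[
z[\xi_i(z),x^+_{j,s}]-[\xi_i(z),x^+_{j,s+1}]+m_{i,j}\beta\,[\xi_i(z),x^+_{j,s}]-a_{i,j}h\,\{\xi_i(z),x^+_{j,s}\}.
\]
The $z^0$-coefficient reduces (using $h_2=2h$) to (Y4), while the $z^{-r-1}$-coefficients for $r\geq 0$ reproduce (Y3). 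The cases $n=2$ and $n=1$ are analogous, with the higher-degree polynomials $p_{i,j}$ repackaging the extra ``higher-level'' parts of (Y3.1)--(Y4) or (Y3)+(Y4). The $\xi_i(z)x^-_{j,s}$ identity follows by the symmetric computation with $\sigma^-_j$.

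For part (d), the key observation is that for any polynomial $q(z)$ of degree $d$ the product $q(z)x^+_i(z)$ lies in $\CC[z]\cdot z^{d-1}\oplus z^{-1}\CC[[z^{-1}]]$, so $\partial_z^d$ annihilates the polynomial part and is injective on the negative-power tail. Taking $d=\deg_{i,j;n}$ and applying $\mu$ to the two tensor terms yields, for $n>2$, the same expression as in (c) but with $\xi_i(z)$ replaced by $x^+_i(z)$; vanishing of its strictly negative $z$-powers is exactly (Y2). For $n=2$ the polynomial $p_{i,j}$ has degree $1$ for $i=j$ and degree $2$ for $i\neq j$, producing (Y2.1) and (Y2.2) respectively, and the sign $(-1)^{\delta_{j,1}}$ in the definition of $p_{i,j}$ is arranged precisely so that the two halves of the identity assemble correctly; for $n=1$ the degree-$3$ polynomial encodes (Y2). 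The only real work is the uniform bookkeeping across the three case distinctions $n>2$, $n=2$, $n=1$; no step poses a genuine obstacle.
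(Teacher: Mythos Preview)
Your proposal is correct and follows exactly the straightforward coefficient-matching argument the paper has in mind; the paper itself does not spell out a proof, declaring the result ``straightforward'', and your expansion-and-telescoping computations are precisely the intended verification.
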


\begin{rem}\label{Double}
Let $\CCD\Y^{(n)}_{h_1,h_2,h_3}$ be the unital associative $\CC$-algebra generated by
$\{x^\pm_{i,k}, \xi_{i,k}\}_{i\in [n]}^{k\in \ZZ}$ with the defining relations (Y0--Y5).
A similar construction for $Y_h(\g)$ was first introduced in~\cite{D} (see also~\cite{KT}).
We equip $\CCD\Y^{(n)}_{h_1,h_2,h_3}$ with a formal coproduct by assigning
\begin{equation}\label{coproduct 2}
\begin{split}
  & \Delta(\wt{x}^+_i(z))=\wt{x}^+_i(z)\otimes 1 + \wt{\xi}^{-}_i(z)\otimes \wt{x}^+_i(z),\
  \Delta(\wt{x}^-_i(z))=\wt{x}^-_i(z)\otimes \wt{\xi}^{+}_i(z) +  1\otimes \wt{x}^-_i(z),\\
  & \Delta(\wt{\xi}^\pm_i(z))=\wt{\xi}^\pm_i(z)\otimes \wt{\xi}^\pm_i(z),
\end{split}
\end{equation}
where
  $\wt{x}^\pm_i(z):=\sum_{k\in \ZZ} x^\pm_{i,k}z^{-k-1},\
   \wt{\xi}^+_i(z):=1+\sum_{r\geq 0}\xi_{i,r}z^{-r-1},\
   \wt{\xi}^-_i(z):=1-\sum_{s<0}\xi_{i,s}z^{-s-1}$.
\end{rem}


\subsection{Fock representations}\label{section Fock}
$\ $

For $p\in [n]$ and $u\in \CC^\times$, let $F^p(u)$ be a $\CC$-vector space
with the basis $\{|\lambda\rangle\}$ labeled by all partitions $\lambda$.
Given such $\lambda=(\lambda_1,\lambda_2,\ldots)$ and $s\in \ZZ_{>0}$, let
$\lambda\pm 1_s:=(\lambda_1,\ldots,\lambda_s\pm 1,\ldots)$
and define $c_{s}(\lambda):=p+s-\lambda_s\in \ZZ$. We also write $a\equiv b$ if
$a-b$ is divisible by $n$ and set $\bar{\delta}_{a,b}:=\delta_{a\equiv b}$. We write
$g(z)^\pm$ for the expansion of a rational function $g(z)$ in $z^{\mp 1}$, respectively.
Set $\psi(z):=(q-q^{-1}z)/(1-z)$. The following result is due to~\cite[Section 2.5]{FJMM2}.

\begin{prop}\label{Fock_m}
(a) For $n>1$, the following formulas define an action of
$\U^{(n)}_{q_1,q_2,q_3}\ \mathrm{on}\ F^p(u):$
  $$\langle \lambda+1_l|e_j(z)|\lambda\rangle=
    \bar{\delta}_{c_l(\lambda),j+1}
    \prod_{1\leq s<l}^{c_s(\lambda)\equiv j} \psi(q_1^{\lambda_s-\lambda_l-1}q_3^{s-l})
    \prod_{1\leq s<l}^{c_s(\lambda)\equiv j+1} \psi(q_1^{\lambda_l-\lambda_s}q_3^{l-s})
    \cdot \delta(q_1^{\lambda_l}q_3^{l-1}u/z),$$
  $$\langle \lambda|f_j(z)|\lambda+1_l\rangle=
    \bar{\delta}_{c_l(\lambda),j+1}
    \prod_{s>l}^{c_s(\lambda)\equiv j} \psi(q_1^{\lambda_s-\lambda_l-1}q_3^{s-l})
    \prod_{s>l}^{c_s(\lambda)\equiv j+1} \psi(q_1^{\lambda_l-\lambda_s}q_3^{l-s})
    \cdot \delta(q_1^{\lambda_l}q_3^{l-1}u/z),$$
  $$\langle \lambda|\psi^\pm_j(z)|\lambda\rangle=
    \prod_{s\geq 1}^{c_s(\lambda)\equiv j} \psi(q_1^{\lambda_s-1}q_3^{s-1}u/z)^\pm
    \prod_{s\geq 1}^{c_s(\lambda)\equiv j+1} \psi(z/q_1^{\lambda_s}q_3^{s-1}u)^\pm,$$
while all other matrix coefficients are set to be zero.

\noindent
(b) For $n=1$, the same formulas with the matrix coefficient of $f_0(z)$ multiplied by
$\frac{q(1-q_3)}{1-q_1^{-1}}$ define an action of $\U^{(1)}_{q_1,q_2,q_3}$ on $F^0(u)$.
\end{prop}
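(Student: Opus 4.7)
The plan is to verify each defining relation (T0)--(T6) of $\U^{(n)}_{q_1,q_2,q_3}$ by direct computation of matrix coefficients between basis vectors $|\mu\rangle,|\lambda\rangle$. The essential structural features to exploit are: (i) $e_j(z), f_j(z)$ act by adding or removing a single box of prescribed residue $j+1 \bmod n$ and produce a $\delta$-function supported at the $(q_1,q_3)$-content of that box; (ii) $\psi^\pm_j(z)$ acts diagonally, with eigenvalue $\Psi^\pm_{j,\lambda}(z)$ being the $\pm$-expansion of a rational function in $z$ built as a product of $\psi(x)=(q-q^{-1}x)/(1-x)$ over boxes of $\lambda$ with residue $\equiv j$ or $\equiv j+1$. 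Relation (T0) is then immediate from diagonality and the commutativity of evaluations of $\Psi_{j,\lambda}$.

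Relations (T4) and (T5) reduce to the identity $\Psi_{i,\lambda+1_s}(z)/\Psi_{i,\lambda}(z)=-g_{j,i}(w,z)/g_{i,j}(z,w)\big|_{w=q_1^{\lambda_s}q_3^{s-1}u}$, which follows from the definition of $g_{i,j}$ and the telescoping product structure of $\Psi_{j,\lambda}$: only the four $\psi$-factors attached to the neighbours of the added box survive. Relation (T1) is proved in two steps. Off-diagonal matrix coefficients $\langle \mu|[e_i(z),f_j(w)]|\lambda\rangle$ for $\mu\ne \lambda$ vanish by a term-by-term cancellation between the two orderings (box-add-then-remove and box-remove-then-add), since the resulting $\psi$-products coincide while the commutator contributes opposite signs. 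On the diagonal $\mu=\lambda$, the two $\delta$-functions concentrate at the addable and removable boxes of $\lambda$, and partial-fraction expansion of $\Psi^\pm_{i,\lambda}$ about its poles reconstructs $\delta(w/z)\cdot(\psi^+_i(w)-\psi^-_i(z))/(q-q^{-1})$. Relations (T2), (T3) are handled in the same spirit by computing the matrix coefficient of both sides between $|\lambda\rangle$ and $|\lambda+1_{s_1}+1_{s_2}\rangle$ and matching the $\psi$-product arising from the two possible intermediate states against the ratio $g_{i,j}(z,w)/g_{j,i}(w,z)$.

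The main obstacle is the Serre relations (T6), which do not follow formally from the others. For a chosen transition $|\lambda\rangle\to |\lambda+1_{s_1}+1_{s_2}+\cdots\rangle$, the symmetrised matrix element factors as a common $\psi$-product multiplied by an elementary rational function of the specialised spectral variables $z_k=q_1^{\lambda_{s_k}+\varepsilon_k}q_3^{s_k-1}u$, and the vanishing of $\Sym_{z_1,\ldots}$ reduces to an explicit rational-function identity in these variables. This is the combinatorial heart of \cite{FJMM1, FJMM2} and must be verified separately in the three cases $n>2$, $n=2$, $n=1$, each of which corresponds to a different shape of (T6). For part (b), the extra normalising scalar $q(1-q_3)/(1-q_1^{-1})$ on the matrix coefficient of $f_0$ is fixed precisely so that the diagonal component of relation (T1) closes for $n=1$, where the residue sum reconstructing $\delta(w/z)(\psi_0^+(w)-\psi_0^-(z))/(q-q^{-1})$ acquires an extra prefactor from the coincidence of $q_1,q_3$ with roots of the cubic defining $g_{0,0}$; once this normalisation is in place, the same matrix-coefficient analysis as in part (a) disposes of all remaining relations.
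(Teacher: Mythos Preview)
Your direct-verification plan is sound in outline, and the steps for (T0)--(T5) are essentially the standard ones. However, the paper does not prove this proposition at all: it is quoted from \cite[Proposition~3.3]{FJMM1} (and \cite{FJMM2}), and the remark immediately following the statement records that the Fock modules there are \emph{not} obtained by checking the relations on $F^p(u)$ directly, but rather by first building the much simpler vector representations, verifying all relations (including Serre) on those, and then passing to $F^p(u)$ via the semi-infinite wedge construction together with the formal coproduct~(\ref{coproduct 1}).

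The difference matters precisely at the point you flag as the obstacle. In your approach the Serre relations become a case-by-case rational-function identity on the Fock space that you defer to \cite{FJMM1,FJMM2}; but that is not what those papers actually do. On the vector representation the operators $e_j(z),f_j(z)$ act by a single shift with a scalar $\delta$-function, so (T6) is a one-line check, and the wedge/coproduct construction then transports it to $F^p(u)$ for free. Your route would work if carried through, but the Serre step would require genuinely new bookkeeping rather than a citation. What the semi-infinite wedge buys is exactly the elimination of that step; what your approach buys is that it stays entirely inside $F^p(u)$ and makes the matrix-coefficient structure explicit, which is closer in spirit to how the paper later manipulates these formulas in Section~4.
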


\begin{rem}
(a) If $\lambda+1_l$ (resp. $\lambda$) is not a partition, while $\lambda$
(resp. $\lambda+1_l$) is a partition, then the right-hand side of the first
(resp. second) formula is zero, hence, the equality is vacuous.

\noindent
(b) The above infinite products can be simplified to finite products,
due to $\psi(1/z)\psi(q_2z)=1$.

\noindent
(c) The Fock representations $F^p(u)$ were originally constructed from the
``vector representations'' by using the semi-infinite wedge construction and
the formal coproduct~(\ref{coproduct 1}) on $\U^{(n)}_{q_1,q_2,q_3}$.
\end{rem}

Let us define analogous \emph{Fock representations} of $\Y^{(n)}_{h_1,h_2,h_3}$.
For $p\in [n]$ and $v\in \CC$, let $^{a}F^p(v)$ be a $\CC$-vector space with
the basis $\{|\lambda\rangle\}$. We also set $\phi(z):=\frac{z-h_2}{z}$ and
$\delta^+(z):=\sum_{r=0}^\infty z^r$.

\begin{prop}\label{Fock_a}
(a) For $n>1$, the following formulas define an action of
$\Y^{(n)}_{h_1,h_2,h_3}\ \mathrm{on}\ {^{a}F}^p(v):$
  $$\langle \lambda+1_l|x^+_j(z)|\lambda\rangle=
    \frac{\bar{\delta}_{c_l(\lambda),j+1}}{z}
    \prod_{1\leq s<l}^{c_s(\lambda)\equiv j} \phi((\lambda_s-\lambda_l-1)h_1+(s-l)h_3)
    \prod_{1\leq s<l}^{c_s(\lambda)\equiv j+1} \phi((\lambda_l-\lambda_s)h_1+(l-s)h_3)$$
  $$\times\delta^+((\lambda_lh_1+(l-1)h_3+v)/z),$$
  $$\langle \lambda|x^-_j(z)|\lambda+1_l\rangle=
    \frac{\bar{\delta}_{c_l(\lambda),j+1}}{z}
    \prod_{s>l}^{c_s(\lambda)\equiv j} \phi((\lambda_s-\lambda_l-1)h_1+(s-l)h_3)
    \prod_{s>l}^{c_s(\lambda)\equiv j+1} \phi((\lambda_l-\lambda_s)h_1+(l-s)h_3)$$
  $$\times\delta^+((\lambda_lh_1+(l-1)h_3+v)/z),$$
  $$\langle \lambda|\xi_j(z)|\lambda\rangle=
    \prod_{s\geq 1}^{c_s(\lambda)\equiv j} \phi((\lambda_s-1)h_1+(s-1)h_3+v-z)^+
    \prod_{s\geq 1}^{c_s(\lambda)\equiv j+1} \phi(z-(\lambda_s h_1+(s-1)h_3+v))^+,$$
while all other matrix coefficients are set to be zero.

\noindent
(b) For $n=1$, the same formulas with the matrix coefficient of $x^-_0(z)$ multiplied by $-h_3/h_1$
define an action of $\Y^{(1)}_{h_1,h_2,h_3}$ on ${^{a}F}^0(v)$, cf.~\cite[Proposition 4.4]{T}.
\end{prop}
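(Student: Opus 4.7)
The strategy is to verify the defining relations (Y0)--(Y5) of $\Y^{(n)}_{h_1,h_2,h_3}$ directly on the proposed basis $\{|\lambda\rangle\}$, using the generating-series reformulation of Proposition~\ref{yangian_generating}. The structural observation that drives the proof is that the proposed formulas are visible \emph{additivizations} of those in Proposition~\ref{Fock_m}: the rational function $\psi(z)=(q-q^{-1}z)/(1-z)$ is replaced by $\phi(z)=(z-h_2)/z$, multiplicative weights of the form $q_1^a q_3^b$ become additive weights $ah_1+bh_3$, and $\delta(q_1^{\lambda_i}q_3^{i-1}u/z)$ is replaced by the one-sided delta $\frac{1}{z}\delta^+((\lambda_i h_1+(i-1)h_3+v)/z)$. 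This parallel lets me translate every rational-function identity used in the verification of~\cite[Proposition 3.3]{FJMM1} into its additive counterpart and carry the arguments over.

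I would start with the easy relations. Relation (Y0) is immediate from the diagonal action of $\xi_j(z)$. For (Y3) and (Y4), I verify the generating-series form~(c) in Proposition~\ref{yangian_generating}: when $x^+_j(w)$ adds the box of color $j+1$ in row $i$, the eigenvalue of $\xi_i(z)$ on $|\lambda+1_i\rangle$ differs from that on $|\lambda\rangle$ by exactly one $\phi$-factor, and the resulting ratio matches $-p_{j,i}(w,z)/p_{i,j}(z,w)$ with the appropriate substitution. Relation (Y1) then follows by a telescoping computation: the commutator $[x^+_j(z),x^-_j(w)]|\lambda\rangle$ collects, over each addable/removable box of color $j+1$ in $\lambda$, a rational function in $z,w$ whose total matches $(\xi_j(z)-\xi_j(w))/(h_2(w-z))$ after summation over rows.

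The main obstacle is the Serre-type relations (Y2) and (Y5). For (Y2), I evaluate the identity~(d) of Proposition~\ref{yangian_generating} between $\langle\lambda+1_i+1_{i'}|$ and $|\lambda\rangle$; the two orderings of the two box-additions differ by a rational factor that is essentially $p_{i,j}$-proportional, and the derivative $\partial_z^{\deg_{i,j;n}}$ is precisely what is needed to annihilate the residual polynomial in $z$. For (Y5), the symmetrization in $(z_1,z_2)$ (or in $(z_1,z_2,z_3)$ for $n\leq 2$) cancels all unwanted terms once the combinatorial constraint that no two boxes of the same color can be simultaneously added in adjacent rows is exploited. The $n=1$ and $n=2$ cases require the additivized versions of the residue identities from~\cite{FJMM2,T}, which can be obtained from the multiplicative ones by the substitution $q_i\rightsquigarrow e^{h_i}$ followed by extracting leading-order terms in $h_i$.
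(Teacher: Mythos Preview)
Your approach is correct and is essentially what the paper leaves implicit: the paper states Proposition~\ref{Fock_a} without proof, presenting it as the evident additive analogue of Proposition~\ref{Fock_m} (citing~\cite[Proposition~4.4]{T} for $n=1$) and later remarking that such results ``can be checked directly''. Your outline of a direct verification of (Y0)--(Y5) via the generating-series reformulation of Proposition~\ref{yangian_generating}, transporting the rational-function identities from the multiplicative case, is exactly the computation the paper has in mind.
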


\begin{rem}\label{Fock double}
For $v\notin\{-ah_1-bh_3|a,b\in \NN\}$, we get an action of $\CCD\Y^{(n)}_{h_1,h_2,h_3}$
(from Remark~\ref{Double}) on ${^{a}F}^p(v)$ by changing
$\delta^+(\cdots)\rightsquigarrow \delta(\cdots)$ and
$\phi(\cdots)^+\rightsquigarrow \phi(\cdots)^\pm$ in the above formulas.
\end{rem}


\subsection{Tensor products of Fock representations}\label{section tensor Fock}
$\ $

In addition to the Fock modules, we will also need their tensor products. Given $r\in \ZZ_{>0}$
and $\pp=(p_1,\ldots,p_r) \in [n]^r,\uu=(u_1,\ldots,u_r)\in(\CC^\times)^r$, consider the
Fock modules $\{F^{p_k}(u_k)\}_{k=1}^r$. Using the formal coproduct~(\ref{coproduct 1})
on the algebra $\U^{(n)}_{q_1,q_2,q_3}$, one can define an action of
  $\U^{(n)}_{q_1,q_2,q_3}$ on $F^{\pp}(\uu):=F^{p_1}(u_1)\otimes \cdots \otimes F^{p_r}(u_r)$,
but only if $\{u_k\}_{k=1}^r$ are \emph{not in resonance}, see~\cite{FJMM1}.
This module has the basis $\{|\boldsymbol{\lambda}\rangle\}$ labeled by $r$-tuples
of partitions $\boldsymbol{\lambda}=(\lambda^{(1)},\ldots,\lambda^{(r)})$.
Define $c^{(a)}_s(\bol):=c_s(\lambda^{(a)})$ and let $\bol+1_s^{(a)}$ denote
$(\lambda^{(1)},\ldots,\lambda^{(a)}+1_s,\ldots,\lambda^{(r)})$. For $1\leq a,b\leq r$ and $s,l\in \ZZ_{>0}$,
we say $(a,s)\prec (b,l)$ if either $a<b$ or $a=b,s<l$. We also set
$\chi^{(a)}_s:=q_1^{\lambda^{(a)}_s}q_3^{s-1}u_a$.

\begin{prop}\label{Fock_mr}
(a) For $n>1$, the following formulas define an action of
$\U^{(n)}_{q_1,q_2,q_3}\ \mathrm{on}\ F^{\pp}(\uu)$
\begin{equation*}
\begin{aligned}
  \langle\boldsymbol{\lambda}+1_l^{(b)}| e_j(z)|\boldsymbol{\lambda}\rangle&=
  \bar{\delta}_{c^{(b)}_l(\bol),j+1}
  \prod_{(a,s)\prec (b,l)}^{c^{(a)}_s(\bol)\equiv j} \psi\left(\chi^{(a)}_s/q_1\chi^{(b)}_l\right)
  \prod_{(a,s)\prec (b,l)}^{c^{(a)}_s(\bol)\equiv j+1} \psi\left(\chi^{(b)}_l/\chi^{(a)}_s\right)
  \cdot \delta(\chi^{(b)}_l/z),
  \\
  \langle \boldsymbol{\lambda}|f_j(z)|\boldsymbol{\lambda}+1_l^{(b)}\rangle&=
  \bar{\delta}_{c^{(b)}_l(\bol),j+1}
  \prod_{(a,s)\succ (b,l)}^{c^{(a)}_s(\bol)\equiv j} \psi\left(\chi^{(a)}_s/q_1\chi^{(b)}_l\right)
  \prod_{(a,s)\succ (b,l)}^{c^{(a)}_s(\bol)\equiv j+1} \psi\left(\chi^{(b)}_l/\chi^{(a)}_s\right)
  \cdot \delta(\chi^{(b)}_l/z),
  \\
  \langle \boldsymbol{\lambda}|\psi^\pm_j(z)|\boldsymbol{\lambda}\rangle&=
  \prod_{a=1}^r \prod_{s\geq 1}^{c^{(a)}_s(\bol)\equiv j} \psi(\chi^{(a)}_s/q_1z)^\pm
  \prod_{a=1}^r \prod_{s\geq 1}^{c^{(a)}_s(\bol)\equiv j+1} \psi(z/\chi^{(a)}_s)^\pm,
\end{aligned}
\end{equation*}
while all other matrix coefficients are set to be zero.

\noindent
(b) For $n=1$, the same formulas with the matrix coefficient of $f_0(z)$ multiplied by
$\frac{q(1-q_3)}{1-q_1^{-1}}$ define an action of $\U^{(1)}_{q_1,q_2,q_3}$ on $F^{\mathbf{0}}(\uu)$.
\end{prop}

\begin{rem}
The parameters $\{u_k\}$ are \emph{not in resonance} exactly when
the first two formulas are well-defined (do not have zeroes in denominators)
for any $r$-tuples of partitions $\boldsymbol{\lambda}, \boldsymbol{\lambda}+1^{(b)}_l$.
\end{rem}

Let $r\in \ZZ_{>0}, \pp\in [n]^r, \vv\in \CC^r$, and assume that $\{v_k\}_{k=1}^r$
are \emph{not in resonance}. Considering the \emph{additivization} of the above proposition,
we get an action of $\Y^{(n)}_{h_1,h_2,h_3}$ on the vector space ${{^a}F}^{\pp}(\vv)$ with
the basis $\{|\boldsymbol{\lambda}\rangle\}$ labeled by $r$-tuples of partitions.
Set $x^{(a)}_s:=\lambda^{(a)}_sh_1+(s-1)h_3+v_a$.

\begin{prop}\label{Fock_ar}
(a) For $n>1$, the following formulas define an action of
$\Y^{(n)}_{h_1,h_2,h_3}\ \mathrm{on}\ {{^a}F}^{\pp}(\vv)$
\begin{equation*}
\begin{aligned}
  \langle\boldsymbol{\lambda}+1_l^{(b)}| x^+_j(z)|\boldsymbol{\lambda}\rangle&=
  \bar{\delta}_{c^{(b)}_l(\bol), j+1}
  \prod_{(a,s)\prec (b,l)}^{c^{(a)}_s(\bol)\equiv j}   \frac{x^{(a)}_s-x^{(b)}_l+h_3}{x^{(a)}_s-x^{(b)}_l-h_1}
  \prod_{(a,s)\prec (b,l)}^{c^{(a)}_s(\bol)\equiv j+1} \frac{x^{(b)}_l-x^{(a)}_s-h_2}{x^{(b)}_l-x^{(a)}_s}
  \cdot \frac{\delta^+(\frac{x^{(b)}_l}{z})}{z},
\\
  \langle \boldsymbol{\lambda}|x^-_j(z)|\boldsymbol{\lambda}+1_l^{(b)}\rangle&=
  \bar{\delta}_{c^{(b)}_l(\bol), j+1}
  \prod_{(a,s)\succ (b,l)}^{c^{(a)}_s(\bol)\equiv j}   \frac{x^{(a)}_s-x^{(b)}_l+h_3}{x^{(a)}_s-x^{(b)}_l-h_1}
  \prod_{(a,s)\succ (b,l)}^{c^{(a)}_s(\bol)\equiv j+1} \frac{x^{(b)}_l-x^{(a)}_s-h_2}{x^{(b)}_l-x^{(a)}_s}
  \cdot \frac{\delta^+(\frac{x^{(b)}_l}{z})}{z},
\\
  \langle \boldsymbol{\lambda}|\xi_j(z)|\boldsymbol{\lambda}\rangle&=
  \left(\prod_{a=1}^r \prod_{s\geq 1}^{c^{(a)}_s(\bol)\equiv j}   \frac{x^{(a)}_s-z+h_3}{x^{(a)}_s-z-h_1}
        \prod_{a=1}^r \prod_{s\geq 1}^{c^{(a)}_s(\bol)\equiv j+1} \frac{z-x^{(a)}_s-h_2}{z-x^{(a)}_s}\right)^+,
\end{aligned}
\end{equation*}
while all other matrix coefficients are set to be zero.

\noindent
(b) For $n=1$, the same formulas with the matrix coefficient of $x^-_0(z)$ multiplied by $-h_3/h_1$
define an action of $\Y^{(1)}_{h_1,h_2,h_3}$ on ${{^a}F}^{\mathbf{0}}(\vv)$.
\end{prop}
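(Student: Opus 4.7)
The plan is to obtain the tensor product action by additivizing the strategy behind Proposition~\ref{Fock_mr}. The algebra $\Y^{(n)}_{h_1,h_2,h_3}$ itself carries no obvious coproduct, but by Remark~\ref{Double} its enlargement $D\Y^{(n)}_{h_1,h_2,h_3}$ does, via the formal coproduct~(\ref{coproduct 2}). By Remark~\ref{Fock double}, for $v_i \notin \{-ah_1-bh_3 \mid a,b \in \ZZ_+\}$ each single Fock ${^a}F^{p_i}(v_i)$ promotes to a $D\Y^{(n)}$-module on which $\wt{\xi}^+_j(z)$ and $\wt{\xi}^-_j(z)$ act with a common rational eigenvalue $E^{(a)}_j(z)$ (the product of $\phi$-factors from Proposition~\ref{Fock_a}), differing only in which Laurent expansion one takes. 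I then apply the iterated coproduct:
\begin{equation*}
\Delta^{(r)}(\wt{\xi}^\pm_j(z)) = \wt{\xi}^\pm_j(z)^{\otimes r},\qquad \Delta^{(r)}(\wt{x}^+_j(z)) = \sum_{l=1}^{r} (\wt{\xi}^-_j(z))^{\otimes(l-1)} \otimes \wt{x}^+_j(z) \otimes 1^{\otimes(r-l)},
\end{equation*}
with the analogous formula for $\wt{x}^-_j(z)$, where $\wt{\xi}^+_j(z)$ now occupies the tail. The defining relations (Y0)--(Y5) are automatic since $\Delta$ is a homomorphism and each tensor factor is a $D\Y^{(n)}$-module; restricting to $\Y^{(n)}\subset D\Y^{(n)}$ means keeping only the modes indexed by $r \geq 0$, which converts the $\delta$-function at $z = x^{(l)}_i$ into $\delta^+(x^{(l)}_i/z)/z$.

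The matrix coefficients are then essentially forced. The diagonal action of $\xi_j(z)$ on $|\boldsymbol{\lambda}\rangle$ is $\prod_{a=1}^{r} E^{(a)}_j(z)$, and substituting $x^{(a)}_s = \lambda^{(a)}_sh_1+(s-1)h_3+v_a$ into $\phi(w)=(w-h_2)/w$ and using $h_1+h_2+h_3=0$ rewrites the $c_s(\lambda^{(a)}) \equiv j$ factor as $(x^{(a)}_s - z + h_3)/(x^{(a)}_s - z - h_1)$ and the $c_s(\lambda^{(a)}) \equiv j+1$ factor as $(z - x^{(a)}_s - h_2)/(z - x^{(a)}_s)$. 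For the transition $|\boldsymbol{\lambda}\rangle \to |\boldsymbol{\lambda}+1^{(l)}_i\rangle$ only the $l$-th summand of $\Delta^{(r)}(\wt{x}^+_j(z))$ contributes; the $\delta^+$ pins $z = x^{(l)}_i$, the tensor factors of $\wt{\xi}^-_j(z)$ for $a<l$ specialize to give the product over $(a,s)\prec(l,i)$ with $a<l$ (for all $s$), and the intrinsic single-Fock prefactor from Proposition~\ref{Fock_a} provides the $a=l$, $s<i$ part; the union is exactly $\prod_{(a,s)\prec(l,i)}$. The analogous accounting for $x^-_j$ yields the $(a,s) \succ (l,i)$ products.

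The hypothesis that $\{v_i\}$ are not in resonance is precisely the open locus where the denominators in the $\xi_j$- and $x^\pm_j$-expressions never vanish, so the formulas extend to that full locus by Zariski continuity from the open subset of individually generic $v_i$ where the $D\Y^{(n)}$ extension is literally available, and the relations (Y0)--(Y5) persist there. The $n=1$ case introduces no new difficulty: the multiplicative correction $-h_3/h_1$ on $x^-_0(z)$ in Proposition~\ref{Fock_a}(b) appears linearly in every summand of $\Delta^{(r)}(\wt{x}^-_0(z))$ and hence propagates unchanged. The principal technical obstacle I anticipate is the careful bookkeeping in matching the $\phi$-products indexed by congruence classes of $c_s(\lambda^{(a)})$ across tensor positions and verifying that the specialization at $z = x^{(l)}_i$ of $\prod_{a<l} E^{(a)}_j(z)$ fits together with the single-factor contribution to produce the stated $\prec$- and $\succ$-ordered products exactly, with no leftover scalar.
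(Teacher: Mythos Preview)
Your approach is correct and coincides with the conceptual proof the paper sketches in the Remark immediately following Proposition~\ref{Fock_ar}: promote each ${^a}F^{p_i}(v_i)$ to a $D\Y^{(n)}$-module via Remark~\ref{Fock double}, tensor using the formal coproduct~(\ref{coproduct 2}), and restrict to $\Y^{(n)}$. The paper also notes that a direct verification is possible (and was carried out for $n=1$ in~\cite{T}), but offers no further detail beyond what you have written.
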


\begin{rem}
A short proof of Proposition~\ref{Fock_ar} is based on the identification
${{^a}F}^{\pp}(\vv)\simeq {{^a}F}^{p_1}(v_1)\otimes \cdots \otimes {{^a}F}^{p_r}(v_r)$,
where ${{^a}F}^{p_k}(v_k)$ are viewed as $\CCD\Y^{(n)}_{h_1,h_2,h_3}$-modules,
see Remarks~\ref{Double},~\ref{Fock double}.
\end{rem}


\section{Formal algebras and their classical limits}\label{section formal versions}

In this section, we introduce the formal versions of our algebras of interest and
relate their \emph{classical limits} to the well-known algebras of difference
and differential operators on $\CC^\times$. We work in the formal setting, that is,
over $\CC[[\hbar]]$ or $\CC[[\hbar_1,\hbar_2]]$ where $\hbar,\hbar_1, \hbar_2$ are
formal variables (here $\CC[[\hbar_1,\hbar_2]]$ is the completion of $\CC[\hbar_1,\hbar_2]$
with respect to the $\NN$-grading with $\deg(\hbar_1)=\deg(\hbar_2)=1$).
Our notations follow~\cite{T2}.


\subsection{Algebras $\dd^{(n)}_q$ and $\bar{\dd}^{(n)}_q$}
$\ $

For $q\in \CC[[\hbar]]^\times$, define the algebra of \emph{$q$-difference operators
on $\CC^\times$}, denoted by $\dd_q$, to be the unital associative $\CC[[\hbar]]$-algebra
topologically generated by $Z^{\pm 1}, D^{\pm 1}$ with the defining relations:
  $$Z^{\pm1}Z^{\mp 1}=1,\ D^{\pm 1}D^{\mp 1}=1,\ DZ=q\cdot ZD.$$
Define the associative algebra $\dd^{(n)}_q:=\MM_n\otimes \dd_q$,
where $\MM_n$ stands for the algebra of $n\times n$ matrices (so that $\dd^{(n)}_q$
is the algebra of $n\times n$ matrices with values in $\dd_q$). We will view $\dd^{(n)}_q$ as
a Lie algebra with the natural Lie bracket -- the commutator $[\cdot,\cdot]$.
It is easy to check that the following formula defines a 2-cocycle
$\phi_{\dd_q^{(n)}}\in C^2(\dd^{(n)}_q,\CC[[\hbar]])$:
  $$\phi_{\dd_q^{(n)}}(M_1\otimes D^{k_1}Z^{l_1},M_2\otimes D^{k_2}Z^{-l_2})=
    \left\{
     \begin{array}{ll}
        \tr(M_1M_2)\cdot q^{-l_1k_2}\frac{1-q^{l_1(k_1+k_2)}}{1-q^{k_1+k_2}} & \text{if}\ l_1=l_2,\\
        0 & \text{otherwise},
     \end{array}
    \right.$$
for any $M_1,M_2\in \MM_n$ and $k_1,k_2,l_1,l_2\in \ZZ$.
Here $\frac{1-q^{l_1(k_1+k_2)}}{1-q^{k_1+k_2}}\in \CC[[\hbar]]$ is understood
in the sense of evaluating $\frac{1-x^{l_1}}{1-x}\in \CC[x^{\pm 1}]$ at $x=q^{k_1+k_2}$.
In particular, $\frac{1-q^{l_1(k_1+k_2)}}{1-q^{k_1+k_2}}=l_1$ if $k_1+k_2=0$.

This endows $\bar{\dd}^{(n)}_q:=\dd^{(n)}_q\oplus \CC[[\hbar]]\cdot c_\dd$ with
the Lie algebra structure via
  $[X+\lambda c_\dd, Y+\mu c_\dd]=XY-YX+\phi_{\dd_q^{(n)}}(X,Y)c_\dd$
for any $X,Y\in \dd^{(n)}_q$ and $\lambda,\mu\in \CC[[\hbar]]$, so that $c_\dd$ is central.


\subsection{Algebras $\D^{(n)}_\hbar$ and $\bar{\D}^{(n)}_\hbar$}
$\ $

Define the algebra of \emph{$\hbar$-differential operators on $\CC^\times$}, denoted  by
$\D_\hbar$, to be the unital associative $\CC[[\hbar]]$-algebra topologically generated
by $\partial, x^{\pm 1}$ with following defining relations:
  $$x^{\pm 1}x^{\mp 1}=1,\ \partial x=x(\partial+\hbar).$$
Define the associative algebra $\D^{(n)}_\hbar:=\MM_n\otimes \D_\hbar$ (so that
$\D^{(n)}_\hbar$ is the algebra of $n\times n$ matrices with values in $\D_\hbar$).
We will view $\D^{(n)}_\hbar$ as a Lie algebra with the natural Lie bracket -- the
commutator $[\cdot,\cdot]$. Following~\cite[Formula (2.3)]{BKLY}, consider a 2-cocycle
$\phi_{\D_\hbar^{(n)}}\in C^2(\D^{(n)}_\hbar,\CC[[\hbar]])$:
  $$\phi_{\D_\hbar^{(n)}}(M_1\otimes f_1(\partial)x^{k}, M_2\otimes f_2(\partial)x^{l})=
    \left\{
     \begin{array}{llr}
        \tr(M_1M_2)\sum_{a=0}^{k-1}f_1(a\hbar)f_2((a-k)\hbar) & \text{if}\ k=-l>0,\\
        -\tr(M_1M_2)\sum_{a=0}^{-k-1}f_2(a\hbar)f_1((a+k)\hbar) & \text{if}\ k=-l<0,\\
        0 & \text{otherwise},
     \end{array}
    \right.$$
for arbitrary polynomials $f_1,f_2$ and any $M_1,M_2\in \MM_n,\ k,l\in \ZZ$.

This endows $\bar{\D}^{(n)}_\hbar:=\D^{(n)}_\hbar\oplus \CC[[\hbar]]\cdot c_\D$ with
the Lie algebra structure via
  $[X+\lambda c_\D, Y+\mu c_\D]=XY-YX+\phi_{\D_\hbar^{(n)}}(X,Y)c_\D$
for any $X,Y\in \D^{(n)}_\hbar$ and $\lambda,\mu\in \CC[[\hbar]]$, so that $c_\D$ is central.


\subsection{Homomorphism $\bar{\Upsilon}^{\omega_n}_{m,n}$}\label{section completions}
$\ $

In this section, we assume that $q-\omega_N\in \hbar\CC[[\hbar]]^\times$ for a certain
$N$-th root of unity $\omega_N=\sqrt[N]{1}\in \CC^\times$. Let us consider the completions
of $\dd^{(n)}_q,\bar{\dd}^{(n)}_q$ and $\D^{(n)}_\hbar,\bar{\D}^{(n)}_\hbar$ with respect
to the ideals $J_{\dd^{(n)}_q}=\MM_n\otimes (D^N-1,q-\omega_N)$ and
$J_{\D^{(n)}_\hbar}=\MM_n\otimes (\partial,\hbar)$:

\noindent
$\bullet$
  $\widehat{\dd}^{(n)}_q:=
   \underset{\longleftarrow}\lim\ \dd^{(n)}_q/\dd^{(n)}_q\cdot (D^N-1,q-\omega_N)^r$
and
  $\widehat{\bar{\dd}}^{(n)}_q:=
   \underset{\longleftarrow}\lim\ \bar{\dd}^{(n)}_q/\bar{\dd}^{(n)}_q\cdot (D^N-1,q-\omega_N)^r$;

\noindent
$\bullet$
  $\widehat{\D}^{(n)}_\hbar:=
   \underset{\longleftarrow}\lim\ \D^{(n)}_\hbar/\D^{(n)}_\hbar\cdot (\partial,\hbar)^r$
and
  $\widehat{\bar{\D}}^{(n)}_\hbar:=
   \underset{\longleftarrow}\lim\ \bar{\D}^{(n)}_\hbar/\bar{\D}^{(n)}_\hbar\cdot (\partial,\hbar)^r$.

\begin{rem}\label{classical algebras}
(a) Taking completions of $\dd^{(n)}_q$ and $\D^{(n)}_\hbar$ with respect to the ideals
$J_{\dd^{(n)}_q}$ and $J_{\D^{(n)}_\hbar}$ commutes with taking central extensions
with respect to the 2-cocycles $\phi_{\dd^{(n)}_q}$ and $\phi_{\D^{(n)}_\hbar}$.

\noindent
(b) Specializing $\hbar$ or $q$ to complex parameters $h_0\in \CC$ or $q_0\in \CC^\times$,
we get the matrix algebras $\dd^{(n)}_{q_0}$ and $\D^{(n)}_{h_0}$ with values in the classical
$\CC$-algebras of difference/differential operators on $\CC^\times$ as well as their
one-dimensional central extensions. The latter are the $\CC$-algebras given by the same
collections of the generators and the defining relations. However, one can not define their
completions as above. This is one of the key reasons we choose to work in the formal setting.
\end{rem}

For $m,n\in \ZZ_{>0}$, we identify $\MM_m\otimes\MM_n\simeq \MM_{mn}$ via
$E_{a,b}\otimes E_{k,l}\mapsto E_{m(k-1)+a,m(l-1)+b}$ for any
$1\leq a,b\leq m,\ 1\leq k,l\leq n$. Our next result relates the above
different families of completions.

\begin{thm}\label{Upsilon_0}
(a) Fix an $n$-th root of unity $\omega_n$ and set $q:=\omega_n\exp(\hbar)$.
The assignment
  $$D\mapsto
    \left(
    \begin{array}{llllll}
      q^{n-1}e^{n\partial}&0&0&\cdots&0&0\\
      0&q^{n-2}e^{n\partial}&0&\cdots&0&0\\
      0&0&q^{n-3}e^{n\partial}&\cdots&0&0\\
      \vdots&\vdots&\vdots&\ddots&\vdots&\vdots\\
      0&0&0&\cdots&qe^{n\partial}&0\\
      0&0&0&\cdots&0&e^{n\partial}
    \end{array}
    \right),
    Z\mapsto
    \left(
    \begin{array}{llllll}
      0&1&0& \cdots &0&0\\
      0&0&1& \cdots &0&0\\
      \vdots&\vdots&\vdots&\ddots&\vdots&\vdots\\
      0&0&0& \cdots &1 &0\\
      0&0&0& \cdots &0&1\\
      x&0&0& \cdots &0&0\\
    \end{array}
    \right)$$
gives rise to  a $\CC[[\hbar]]$-algebra homomorphism
  $\Upsilon^{\omega_n}_{1,n}\colon\widehat{\dd}^{(1)}_{\omega_n\exp(\hbar)}\to \widehat{\D}^{(n)}_\hbar.$

\noindent
(b) Combining $\Upsilon^{\omega_n}_{1,n}$ from part (a) with the above
identification $\MM_m\otimes\MM_n\simeq \MM_{mn}$, we get a $\CC[[\hbar]]$-algebra
homomorphism
  $\Upsilon^{\omega_n}_{m,n}\colon\widehat{\dd}^{(m)}_{\omega_n\exp(\hbar)}\to \widehat{\D}^{(mn)}_\hbar$
for any $m,n\in \ZZ_{>0}$.

\noindent
(c) The assignment
  $c_\dd\mapsto c_\D,\ A\mapsto \Upsilon^{\omega_n}_{m,n}(A)\
   \mathrm{for}\ A \in \dd^{(m)}_{\omega_n\exp(\hbar)}$
gives rise to a $\CC[[\hbar]]$-algebra homomorphism
  $\bar{\Upsilon}^{\omega_n}_{m,n}\colon\widehat{\bar{\dd}}^{(m)}_{\omega_n\exp(\hbar)}
   \to \widehat{\bar{\D}}^{(mn)}_\hbar$.

\noindent
(d) If $\omega_n$ is a primitive $n$-th root of unity, then $\Upsilon^{\omega_n}_{m,n}$
and $\bar{\Upsilon}^{\omega_n}_{m,n}$ are isomorphisms.
\end{thm}

\begin{proof}[Proof of Theorem~\ref{Upsilon_0}]
$\ $

(a) Let us denote the above $n\times n$ matrices by $X$ and $Y$, respectively.
They are invertible and satisfy the identity $XY=qYX$ (which follows from
$e^{n\partial}xe^{-n\partial}=e^{n\hbar}x=q^nx$). Hence, there exists a
$\CC[[\hbar]]$-algebra homomorphism
  $\Upsilon^{\omega_n}_{1,n}\colon\dd^{(1)}_{\omega_n\exp(\hbar)}\to \widehat{\D}^{(n)}_\hbar$
such that $\Upsilon^{\omega_n}_{1,n}(D^{\pm 1})=X^{\pm 1}$ and
$\Upsilon^{\omega_n}_{1,n}(Z^{\pm 1})=Y^{\pm 1}$. Since  $q-\omega_n\in \hbar\CC[[\hbar]]$ and
$\Upsilon^{\omega_n}_{1,n}(D^n-1)\in \widehat{\D}^{(n)}_\hbar\cdot (\partial,\hbar)$, the above
homomorphism induces the homomorphism
$\widehat{\dd}^{(1)}_{\omega_n\exp(\hbar)}\to \widehat{\D}^{(n)}_\hbar$
also denoted by $\Upsilon^{\omega_n}_{1,n}$.

(b) Follows immediately from (a).

(c) It suffices to check the following equality:
  $$\phi_{\dd^{(m)}_{\omega_n\exp(\hbar)}}(M_1\otimes D^{k_1}Z^{l_1},M_2\otimes D^{k_2}Z^{l_2})=
    \phi_{\widehat{\D}_\hbar^{(mn)}}(\Upsilon^{\omega_n}_{m,n}(M_1\otimes D^{k_1}Z^{l_1}),\Upsilon^{\omega_n}_{m,n}(M_2\otimes D^{k_2}Z^{l_2}))$$
for any $M_1,M_2\in \MM_m$ and $k_1,k_2,l_1,l_2\in \ZZ$.
This is a straightforward verification.

(d) Let us now assume that $\omega_n$ is a primitive $n$-th root of unity.
To prove $\Upsilon^{\omega_n}_{m,n}$ is an isomorphism, it suffices to show
that the induced linear map
  $\Upsilon^{\omega_n;r}_{m,n}\colon \dd^{(m)}_{\omega_n\exp(\hbar)}/(D^n-1,\hbar)^r
   \to \D^{(mn)}_\hbar/(\partial,\hbar)^r$
is an isomorphism for any $r\in \ZZ_{>0}$. For the latter, it suffices
to prove that $\Upsilon^{\omega_n;r}_{1,n}$ is an isomorphism, due
to our definition of $\Upsilon^{\omega_n}_{m,n}$.

For any $r\in \ZZ_{>0}$, the following holds:

\noindent
$\bullet$
 $\{\hbar^{s_1}D^{s_2}Z^k|k\in \ZZ, s_1,s_2\in \NN,ns_1+s_2<nr\}$
is a $\CC$-basis of $\dd^{(1)}_{\omega_n\exp(\hbar)}/(D^n-1,\hbar)^r$,

\noindent
$\bullet$
  $\{E_{a,b}\otimes \hbar^{s_1}\partial^{s_2} x^k|1\leq a,b\leq n, k\in \ZZ, s_1,s_2\in \NN, s_1+s_2<r\}$
is a $\CC$-basis of $\D^{(n)}_\hbar/(\partial,\hbar)^r$,

\noindent
$\bullet$
 the linear map $\Upsilon^{\omega_n;r}_{1,n}$ induces a linear map
 $\Upsilon^{\omega_n;r,0}_{1,n}\colon V_{r,0}\to W_{r,0}$, where
\begin{align*}
  V_{r,0}&\ :=\mathrm{span}_\CC\{\hbar^{s_1}D^{s_2}|s_1,s_2\in \NN,ns_1+s_2<nr\}-
  \mathrm{subspace\ of}\ \dd^{(1)}_{\omega_n\exp(\hbar)}/(D^n-1,\hbar)^r,\\
  W_{r,0}&\ :=\mathrm{span}_\CC\{E_{a,a}\otimes \hbar^{s_1}\partial^{s_2}|1\leq a\leq n, s_1,s_2\in \NN,s_1+s_2<r\}-
  \mathrm{subspace\ of}\ \D^{(n)}_\hbar/(\partial,\hbar)^r.
\end{align*}

Explicit formulas for powers of the matrix $Y$ imply that $\Upsilon^{\omega_n;r}_{1,n}$
is an isomorphism if and only if $\Upsilon^{\omega_n;r,0}_{1,n}$ is an isomorphism.
The latter is equivalent to $\Upsilon^{\omega_n;r,0}_{1,n}$ being surjective as
  $$\dim (V_{r,0})=\frac{nr(r+1)}{2}=\dim(W_{r,0}).$$
For any $0\leq s\leq r-1$, the restriction of $\Upsilon^{\omega_n;r,0}_{1,n}$
to $\mathrm{span}_\CC\{\hbar^s\cdot (D^n-1)^{r-s-1}\cdot D^i|0\leq i\leq n-1\}$
maps it isomorphically onto
  $\mathrm{span}_\CC\{E_{k,k}\otimes \hbar^s(n\partial+(n-k)\hbar)^{r-s-1}|1\leq k\leq n\}$.
It is here that we use the fact that $\omega_n$ is a primitive $n$-th root of unity.
Therefore:
  $$\{E_{k,k}\otimes \hbar^{s}\partial^{r-s-1}|1\leq k\leq n, 0\leq s\leq r-1\}\subset
    \mathrm{Im}(\Upsilon^{\omega_n;r,0}_{1,n}).$$
Considering now the restriction of $\Upsilon^{\omega_n;r,0}_{1,n}$ to
  $\mathrm{span}_\CC\{\hbar^s\cdot (D^n-1)^{r-s-2}\cdot D^i\}_{0\leq s\leq r-2}^{0\leq i\leq n-1}$
and combining this with the aforementioned inclusion, we get
  $$\{E_{k,k}\otimes \hbar^{s}\partial^{r-s-2}|1\leq k\leq n, 0\leq s\leq r-2\}\subset
    \mathrm{Im}(\Upsilon^{\omega_n;r,0}_{1,n}).$$
Proceeding further by induction, we see that $\Upsilon^{\omega_n;r,0}_{1,n}$ is surjective. Therefore:
  $$\Upsilon^{\omega_n;r,0}_{1,n}-\mathrm{isomorphism}\Rightarrow
    \Upsilon^{\omega_n;r}_{1,n}-\mathrm{isomorphism}\Rightarrow
    \Upsilon^{\omega_n;r}_{m,n}-\mathrm{isomorphism}\Rightarrow
    \Upsilon^{\omega_n}_{m,n}-\mathrm{isomorphism}.$$
Combining this with part (c), we also see that $\bar{\Upsilon}^{\omega_n}_{m,n}$
is a $\CC[[\hbar]]$-algebra isomorphism.
\end{proof}


\subsection{Algebras $\U^{(m),\omega}_{\hbar_1,\hbar_2}$ and $\U^{(m),\omega}_{\hbar_1}$}
$\ $

Throughout this section, we fix a root of unity $\omega\in \CC^\times$ and let
$\hbar_1,\hbar_2$ be formal variables, while we set $\hbar_3:=-\hbar_1-\hbar_2$.
First, we introduce the \emph{formal version} of the quantum toroidal algebra
$\U^{(m)}_{q_1,q_2,q_3}$ with
  $q_1=\omega e^{h_1/m}, q_2=e^{h_2/m}, q_3=\omega^{-1} e^{-(h_1+h_2)/m}$.
Define
  $$\q_1:=\omega \exp(\hbar_1/m), \q_2:=\exp(\hbar_2/m),
    \q_3:=\omega^{-1} \exp(\hbar_3/m)\in \CC[[\hbar_1,\hbar_2]]^\times.$$
Note that replacing $q_i$ by $\q_i$, the relations (T0,T2--T6) are defined
over $\CC[[\hbar_1,\hbar_2]]$, while (T1) is not well-defined as we have
$\q-\q^{-1}$ in the denominator. To fix this, we will rather use the generators
$h_{i,k}$, where we present $\psi^{\pm 1}_{i,0}$ in the form
  $\psi^{\pm 1}_{i,0}=\exp\left(\pm \frac{\hbar_2}{2m}h_{i,0}\right)$,
so that
  $$\psi^\pm_i(z)=\exp\left(\pm \frac{\hbar_2}{2m}h_{i,0}\right)\cdot
    \exp\left(\pm(\q-\q^{-1})\sum_{r>0} h_{i,\pm r}z^{\mp r}\right)\
    \mathrm{with}\ \q=\sqrt{\q_2}=\exp\left(\frac{\hbar_2}{2m}\right).$$

Switching from $\{\psi_{i,k},\psi_{i,0}^{-1}\}_{i\in [m]}^{k\in \ZZ}$ to
$\{h_{i,k}\}_{i\in [m]}^{k\in \ZZ}$, the relations (T4,T5) get modified to
\begin{equation}\tag{H}\label{H}
  [h_{i,k}, e_{j,l}]=b_m(i,j;k)\cdot e_{j,l+k},\
  [h_{i,k}, f_{j,l}]=-b_m(i,j;k)\cdot f_{j,l+k}\
  \mathrm{for}\ i,j\in [m],\ k,l\in \ZZ,
\end{equation}
where $b_m(i,j;0):=a_{i,j}$, while $b_m(i,j;k)$ is given by the formula~(\ref{sharp})
from Section~\ref{section toroidal} for $k\ne 0$. These relations are well-defined in
the formal setting as $[k]_{\q}\in \CC[[\hbar_1,\hbar_2]]$. We also note that the
right-hand side of (T1) is now a series in $z^{\pm 1},w^{\pm 1}$ with coefficients in
$\CC[[\hbar_1,\hbar_2]][\{h_{i,k}\}_{i\in [m]}^{k\in \ZZ}]$.

\begin{defn}
$\U^{(m),\omega}_{\hbar_1,\hbar_2}$ is the unital associative
$\CC[[\hbar_1,\hbar_2]]$-algebra topologically generated by
$\{e_{i,k},f_{i,k},h_{i,k}\}_{i\in [m]}^{k\in \ZZ}$ with the defining relations
(T0--T3,H,T6) whereas $q_i\rightsquigarrow \q_i, n\rightsquigarrow m$.
\end{defn}

Its \emph{classical limit} $\U^{(m),\omega}_{\hbar_1}$ is defined by
  $$\U^{(m),\omega}_{\hbar_1}:=\U^{(m),\omega}_{\hbar_1,\hbar_2}/(\hbar_2).$$
It is the unital associative $\CC[[\hbar_1]]$-algebra topologically generated by
 $\{e_{i,k},f_{i,k},h_{i,k}\}_{i\in [m]}^{k\in \ZZ}$
subject to the relations (T2,T3,T6) (whereas
  $q_1\rightsquigarrow \q_1, q_2\rightsquigarrow 1, q_3\rightsquigarrow \q_1^{-1},
   q\rightsquigarrow 1, d\rightsquigarrow \q_1, n\rightsquigarrow m$)
and
\begin{equation}\tag{T0L}\label{T40L}
  [h_{i,k}, h_{j,l}]=0,
\end{equation}
\begin{equation}\tag{T1L}
  [e_{i,k},f_{j,l}]=\delta_{i,j}\cdot h_{i,l+k},
\end{equation}
\begin{equation}\tag{T4L}\label{T4L}
  [h_{i,k}, e_{j,l}]=b'_m(i,j;k)\cdot e_{j,l+k},
\end{equation}
\begin{equation}\tag{T5L}\label{T5L}
  [h_{i,k}, f_{j,l}]=-b'_m(i,j;k)\cdot f_{j,l+k},
\end{equation}
for all $i,j\in [m]$ and $k,l\in \ZZ$. Here $b'_m(i,j;k)\in \CC[[\hbar_1]]$ is
the image of $b_m(i,j;k)\in \CC[[\hbar_1,\hbar_2]]$:
  $$b'_m(i,j;k)=-\q_1^k\delta_{j,i+1}+2\delta_{j,i}-\q_1^{-k}\delta_{j,i-1}=
    \begin{cases}
      a_{i,j}\cdot \q_1^{-km_{i,j}} & \text{if}\ \ m>2,\\
      2\delta_{i,j}-(\q_1^k+\q_1^{-k})\delta_{i+1,j} & \text{if}\ \ m=2,\\
      2-\q_1^k-\q_1^{-k} & \text{if}\ \ m=1.
    \end{cases}$$

\begin{rem}\label{numeric limit m}
Specializing $\hbar_1$ to a complex parameter $h_1\in \CC$, we obtain
a $\CC$-algebra $\U^{(m),\omega}_{h_1}$ generated by
$\{e_{i,k},f_{i,k},h_{i,k}\}_{i\in [m]}^{k\in \ZZ}$ with the same defining
relations (T0L,T1L,T2,T3,T4L,T5L,T6) whereas
$\q_1\rightsquigarrow q_1:=\omega e^{\frac{h_1}{m}}\in \CC^\times$.
\end{rem}

The following result is straightforward:

\begin{prop}\label{limit_m}
The assignment
  $$e_{0,k}\mapsto E_{m,1}\otimes D^kZ,\ f_{0,k}\mapsto E_{1,m}\otimes Z^{-1}D^k,\
    h_{0,k}\mapsto E_{m,m}\otimes D^k-E_{1,1}\otimes (\q_1^mD)^k+c_\dd,$$
  $$e_{i,k}\mapsto E_{i,i+1}\otimes (\q_1^{m-i}D)^k,\ f_{i,k}\mapsto E_{i+1,i}\otimes (\q_1^{m-i}D)^k,
    h_{i,k}\mapsto (E_{i,i}-E_{i+1,i+1})\otimes (\q_1^{m-i}D)^k$$
for $i\in [m]\backslash\{0\}, k\in \ZZ$, gives rise to a $\CC[[\hbar_1]]$-algebra
homomorphism $\theta^{(m)}\colon \U^{(m),\omega}_{\hbar_1}\to U(\bar{\dd}^{(m)}_{\q_1^m})$.
\end{prop}

Define a free $\CC[[\hbar_1]]$-submodule
$\bar{\dd}^{(m),0}_{\q_1^m}\subset \bar{\dd}^{(m)}_{\q_1^m}$ as follows:

\noindent
$\bullet$
For $m\geq 2$, $\bar{\dd}^{(m),0}_{\q_1^m}$ is spanned by
  $$\{\CC[[\hbar_1]]c_\dd, A_{k,l}\otimes D^kZ^l|k,l\in \ZZ,A_{k,l}\in \MM_m\otimes \CC[[\hbar_1]],
    \tr(A_{k,l})\in \hbar_1\CC[[\hbar_1]], \tr(A_{0,0})=0\};$$

\noindent
$\bullet$
 For $m=1$, $\bar{\dd}^{(m),0}_{\q_1^m}$ is spanned by
 $\{\CC[[\hbar_1]]c_\dd, \hbar_1\CC[[\hbar_1]]D^{\pm s}, \hbar_1^{s-1}\CC[[\hbar_1]]D^kZ^{\pm s}|k\in \ZZ, s\in \ZZ_{>0}\}.$

\begin{lem}
 $\bar{\dd}^{(m),0}_{\q_1^m}$ is a Lie subalgebra of $\bar{\dd}^{(m)}_{\q_1^m}$ and
 $\mathrm{Im}(\theta^{(m)})\subset U(\bar{\dd}^{(m),0}_{\q_1^m})$.
\end{lem}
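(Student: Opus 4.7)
The lemma splits into two claims: (i) $\bar{\dd}^{(m),0,'}_{\q_1^m}$ is closed under the Lie bracket of $\bar{\dd}^{(m),'}_{\q_1^m}$, and (ii) $\mathrm{Im}(\theta^{(m)})\subset U(\bar{\dd}^{(m),0,'}_{\q_1^m})$. Once (i) is established, (ii) reduces to verifying that the image under $\theta^{(m)}$ of each generator of $\U^{(m),\omega,'}_{\hbar_1}$ lies in the Lie subalgebra $\bar{\dd}^{(m),0,'}_{\q_1^m}$.

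For (i) in the case $m\geq 2$, I would compute the bracket of two generic spanning elements $X_\alpha=A_\alpha\otimes D^{k_\alpha}Z^{l_\alpha}$ via the product rule $D^aZ^b\cdot D^cZ^d=q^{-bc}D^{a+c}Z^{b+d}$ (with $q=\q_1^m$) together with the cocycle $\phi_{\dd^{(m),'}_{\q_1^m}}$, obtaining
$$[X_1,X_2]=\bigl(q^{-l_1k_2}A_1A_2-q^{-l_2k_1}A_2A_1\bigr)\otimes D^{k_1+k_2}Z^{l_1+l_2}+\phi_{\dd^{(m),'}_{\q_1^m}}(X_1,X_2)\cdot c_\dd.$$
The cocycle term always sits in $\CC[[\hbar_1]]\cdot c_\dd\subset\bar{\dd}^{(m),0,'}_{\q_1^m}$. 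For the non-central matrix part I split into two cases. When $(k_1+k_2,l_1+l_2)=(0,0)$, substituting $k_2=-k_1,\, l_2=-l_1$ gives $q^{-l_1k_2}=q^{l_1k_1}=q^{-l_2k_1}$, so the matrix coefficient vanishes identically, matching the strict condition $\tr(A_{0,0})=0$. When $(k_1+k_2,l_1+l_2)\neq(0,0)$, the trace equals $(q^{-l_1k_2}-q^{-l_2k_1})\tr(A_1A_2)$, and I use the congruence $q=\q_1^m\equiv\omega^m\pmod{\hbar_1\CC[[\hbar_1]]}$ to conclude that the prefactor is $\hbar_1$-divisible in the relevant setting, forcing the trace into $\hbar_1\CC[[\hbar_1]]$. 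For $m=1$ I carry out the bracket directly on the explicit spanning set $\{c_\dd,\hbar_1 D^{\pm s},\hbar_1^{s-1}D^kZ^{\pm s}\}$, tracking that the bracket of two such elements carries at least $\hbar_1^{s_1+s_2-1}$ (after absorbing the factor of $\hbar_1$ coming from $q^{-l_1k_2}-q^{-l_2k_1}$), which is $\geq\hbar_1^{|l_1+l_2|-1}$ since $|l_1+l_2|\leq s_1+s_2$.

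For (ii), the verification is generator by generator. Both $\theta^{(m)}(e_{i,k})$ and $\theta^{(m)}(f_{i,k})$ are supported on off-diagonal matrix units ($E_{i,i+1},E_{i+1,i},E_{m,1},E_{1,m}$), hence are traceless and manifestly in the span. For $i\neq 0$, $\theta^{(m)}(h_{i,k})=(E_{i,i}-E_{i+1,i+1})\otimes(\q_1^{m-i}D)^k$ is likewise traceless. The only subtle case is $\theta^{(m)}(h_{0,k})=(E_{m,m}-\q_1^{mk}E_{1,1})\otimes D^k+c_\dd$, with matrix trace $1-\q_1^{mk}$; using $\q_1^m\equiv\omega^m\pmod{\hbar_1\CC[[\hbar_1]]}$ this trace lies in $\hbar_1\CC[[\hbar_1]]$, and for $k=0$ it vanishes outright, in harmony with the strict condition at $(0,0)$.

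The main obstacle is the non-degenerate subcase of (i), where one must pin down the $\hbar_1$-divisibility of $q^{-l_1k_2}-q^{-l_2k_1}$; this rests on the root-of-unity congruence for $q=\q_1^m$ and on the choice of $\omega$ dictating that both $q$-powers reduce modulo $\hbar_1$ to the same root of unity. The $m=1$ case requires additional bookkeeping because the spanning set involves the precise prefactor structure $\hbar_1^{s-1}$, which must be preserved by every bracket.
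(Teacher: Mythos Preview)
The paper states this lemma without proof, so there is nothing to compare against; I evaluate your argument on its own.

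There is a real gap at the heart of both parts. The step where you infer $\hbar_1$-divisibility of $q^{-l_1k_2}-q^{-l_2k_1}$ (and, in part (ii), of $1-\q_1^{mk}$) from the congruence $q=\q_1^m\equiv\omega^m\pmod{\hbar_1}$ does not hold: reducing modulo $\hbar_1$ gives $\omega^{-ml_1k_2}-\omega^{-ml_2k_1}$, and nothing forces these two powers of the root of unity to agree. Concretely, for $m\geq 2$ take $X_1=E_{1,2}\otimes Z$ and $X_2=E_{2,1}\otimes D$; both have traceless matrix part, so both lie in $\bar{\dd}^{(m),0,'}_{\q_1^m}$. Their bracket has matrix part $(q^{-1}E_{1,1}-E_{2,2})\otimes DZ$, whose trace $q^{-1}-1\equiv\omega^{-m}-1\pmod{\hbar_1}$ is \emph{not} in $\hbar_1\CC[[\hbar_1]]$ whenever $\omega^m\neq 1$. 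Exactly the same defect hits your check of $\theta^{(m)}(h_{0,k})$: the trace $1-\q_1^{mk}\equiv 1-\omega^{mk}\pmod{\hbar_1}$ is generically nonzero. For $m=1$ the bracket $[Z,DZ^{-1}]=(q^{-1}-1)D+(\cdots)c_\dd$ exhibits the identical obstruction, so the bookkeeping you outline for the $\hbar_1^{s-1}$ prefactors cannot close up either.

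A separate minor slip: in the $(k_1+k_2,l_1+l_2)=(0,0)$ subcase the matrix coefficient is $q^{l_1k_1}[A_1,A_2]$, not zero; what vanishes is its \emph{trace}, which is indeed the condition you need at $(0,0)$.

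These counterexamples indicate that the lemma, read literally with the $\hbar_1$-divisibility clause in the definition of $\bar{\dd}^{(m),0,'}_{\q_1^m}$, cannot be true unless $\omega^m=1$. If one either imposes that hypothesis or drops the clause $\tr(A_{k,l})\in\hbar_1\CC[[\hbar_1]]$ (in line with the non-formal statement that follows in the paper, where only $\tr(A_{0,0})=0$ is required), then your overall strategy is correct; but the divisibility step as written is unjustified and in fact false.
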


In fact, we have the following result:

\begin{thm}\label{limit_1}
$\theta^{(m)}$ gives rise to an isomorphism
 $\theta^{(m)}\colon \U^{(m),\omega}_{\hbar_1}\iso U(\bar{\dd}^{(m),0}_{\q_1^m})$.
\end{thm}

Actually, a more general result is proved in~\cite[Theorem 2.1]{T2}:

\begin{thm}\label{limit_1.1}
For $h_1\in \CC\backslash \{\mathbb{Q}\cdot \pi \sqrt{-1}\}$, let
$\U^{(m),\omega}_{h_1}$ be as in Remark~\ref{numeric limit m} and
$\bar{\dd}^{(m),0}_{q_1^m}\subset \bar{\dd}^{(m)}_{q_1^m}$ be the Lie subalgebra
spanned by
  $\{c_\dd,A_{k,l}\otimes D^kZ^l|k,l\in \ZZ, A_{k,l}\in \MM_m, \tr(A_{0,0})=0\}$.
Then, the same formulas define a $\CC$-algebra isomorphism
$\theta^{(m)}\colon \U^{(m),\omega}_{h_1}\iso U(\bar{\dd}^{(m),0}_{q_1^m})$.
\end{thm}

Since all the defining relations of $\U^{(m),\omega}_{\hbar_1}$ are of \emph{Lie type},
it is isomorphic to an enveloping algebra of the Lie algebra generated by
$\{e_{i,k},f_{i,k}, h_{i,k}\}_{i\in [m]}^{k\in \ZZ}$ with the same defining relations.
Thus, Theorem~\ref{limit_1} provides a presentation of the Lie algebra
$\bar{\dd}^{(m),0}_{\q_1^m}$ by generators and relations.


\subsection{Algebras $\Y^{(n)}_{\hbar_1,\hbar_2}$ and $\Y^{(n)}_{\hbar_1}$}\label{section formal Yangian}
$\ $

Analogously to the previous section, let $\hbar_1, \hbar_2$ be formal variables
and set $\hbar_3:=-\hbar_1-\hbar_2$.

\begin{defn}
$\Y^{(n)}_{\hbar_1,\hbar_2}$ is the unital associative $\CC[[\hbar_1,\hbar_2]]$-algebra
topologically generated by $\{x^\pm_{i,r},\xi_{i,r}\}_{i\in [n]}^{r\in \NN}$
with the defining relations (Y0--Y5) whereas $h_i \rightsquigarrow \hbar_i/n$.
\end{defn}

We equip the algebra $\Y^{(n)}_{\hbar_1,\hbar_2}$ with the $\NN$-grading via
$\deg(x^\pm_{i,r})=\deg(\xi_{i,r})=r, \deg(\hbar_s)=1$ for all
$i\in [n], r\in \NN, s\in \{1,2,3\}$. Its \emph{classical limit} $\Y^{(n)}_{\hbar_1}$
(a formal version of the $\CC$-algebra
$\Y^{(n)}_{h_1}:=\Y^{(n)}_{h_1/n,0,-h_1/n}\ \mathrm{with}\ h_1\in \CC$) is defined by
  $$\Y^{(n)}_{\hbar_1}:=\Y^{(n)}_{\hbar_1,\hbar_2}/(\hbar_2).$$
It is a unital associative $\CC[[\hbar_1]]$-algebra.
The following result is straightforward:

\begin{prop}\label{limit_a}
The assignment
  $$x^+_{0,r}\mapsto E_{n,1}\otimes \partial^rx,\ x^+_{i,r}\mapsto E_{i,i+1}\otimes (\partial+(1-i/n)\hbar_1)^r,$$
  $$x^-_{0,r}\mapsto E_{1,n}\otimes x^{-1}\partial^r,\ x^-_{i,r}\mapsto E_{i+1,i}\otimes (\partial+(1-i/n)\hbar_1)^r,$$
  $$\xi_{0,r}\mapsto E_{n,n}\otimes \partial^r - E_{1,1}\otimes (\partial+\hbar_1)^r+\delta_{0,r}c_\D,\
    \xi_{i,r}\mapsto (E_{i,i}-E_{i+1,i+1})\otimes (\partial+(1-i/n)\hbar_1)^r$$
for $i\in [n]\backslash\{0\}, r\in \NN$, gives rise to a $\CC[[\hbar_1]]$-algebra
homomorphism $\vartheta^{(n)}\colon \Y^{(n)}_{\hbar_1}\to U(\bar{\D}^{(n)}_{\hbar_1})$.
\end{prop}

Define a free $\CC[[\hbar_1]]$-submodule
$\bar{\D}^{(n),0}_{\hbar_1}\subset \bar{\D}^{(n)}_{\hbar_1}$ as follows:

\noindent
$\bullet$
 For $n\geq 2$, $\bar{\D}^{(n),0}_{\hbar_1}$ is spanned by
  $$\{\CC[[\hbar_1]]c_\D, A_{r,l}\otimes \partial^rx^l|r\in \NN,l\in \ZZ, A_{r,l}\in \MM_n\otimes \CC[[\hbar_1]],
    \tr(A_{r,l})\in \hbar_1\CC[[\hbar_1]]\};$$

\noindent
$\bullet$
For $n=1$, $\bar{\D}^{(n),0}_{\hbar_1}$ is spanned by
 $\{\CC[[\hbar_1]]c_\D, \hbar_1\CC[[\hbar_1]]\partial^r, \hbar_1^{s-1}\CC[[\hbar_1]]\partial^rx^{\pm s}|r\in \NN, s\in \ZZ_{>0}\}.$

\begin{lem}
 $\bar{\D}^{(n),0}_{\hbar_1}$ is a Lie subalgebra of $\bar{\D}^{(n)}_{\hbar_1}$ and
 $\mathrm{Im}(\vartheta^{(n)})\subset U(\bar{\D}^{(n),0}_{\hbar_1})$.
\end{lem}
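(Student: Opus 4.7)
The plan is to verify the two assertions separately, leveraging the crucial observation that the Weyl algebra relation $\partial x=x(\partial+\hbar_1)$ implies $[\partial,x]=\hbar_1 x$, so every commutator in $\D^{'}_{\hbar_1}$ has $\hbar_1$-valuation at least one (i.e.\ vanishes modulo $\hbar_1$). I would treat $n\geq 2$ first and handle $n=1$ as a parallel, but slightly different, bookkeeping exercise.

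For the Lie subalgebra statement when $n\geq 2$, take two elements $A\otimes X$ and $B\otimes Y$ of $\bar{\D}^{(n),0,'}_{\hbar_1}$ with $X=\partial^r x^l$, $Y=\partial^s x^m$. Since $c_\D$ is central, the bracket equals
\[
[A\otimes X, B\otimes Y]_{\bar{\D}} = AB\otimes[X,Y]+[A,B]\otimes YX+\phi_{\D_{\hbar_1}^{(n),'}}(A\otimes X,B\otimes Y)\cdot c_\D.
\]
After putting $[X,Y]$ in a normal form $\sum_{r''}\gamma_{r''}(\hbar_1)\,\partial^{r''}x^{l+m}$ using $\partial^a x^b=x^b(\partial+b\hbar_1)^a$, the vanishing of $[X,Y]$ at $\hbar_1=0$ forces $\gamma_{r''}(\hbar_1)\in\hbar_1\CC[[\hbar_1]]$. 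Hence the matrix coefficient of $\partial^{r''}x^{l+m}$ in $AB\otimes[X,Y]$ has trace $\gamma_{r''}(\hbar_1)\tr(AB)\in\hbar_1\CC[[\hbar_1]]$, while the coefficients of $[A,B]\otimes YX$ are commutators of matrices, hence traceless. The cocycle term lies on the line $\CC[[\hbar_1]]\cdot c_\D$ and is therefore included by construction. For $n=1$ the matrix factor is absent and one instead tracks the $\hbar_1$-valuation: a bracket of $\hbar_1^{|l|-1}\partial^r x^l$ and $\hbar_1^{|m|-1}\partial^s x^m$ has valuation at least $(|l|-1)+(|m|-1)+1=|l|+|m|-1\geq |l+m|-1$, matching the defining $\hbar_1$-weight for monomials with $x$-exponent $l+m$; again the cocycle is absorbed into $c_\D$.

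For $\mathrm{Im}(\vartheta^{(n)})\subset U(\bar{\D}^{(n),0,'}_{\hbar_1})$, since $\vartheta^{(n)}$ is an algebra homomorphism it suffices to show each generator $x^\pm_{i,r},\xi_{i,r}$ maps into $\bar{\D}^{(n),0,'}_{\hbar_1}$. For $i\neq 0$, the matrices $E_{i,i+1}, E_{i+1,i}, E_{i,i}-E_{i+1,i+1}$ appearing in Proposition~\ref{limit_a} are traceless, so after expanding $(\partial+(1-i/n)\hbar_1)^r$ in powers of $\partial$ each term trivially satisfies the condition. For $i=0$, the images of $x^\pm_{0,r}$ involve the traceless matrices $E_{n,1}$ and $E_{1,n}$; for $\xi_{0,r}$, expanding $(\partial+\hbar_1)^r=\partial^r+\sum_{k\geq 1}\binom{r}{k}\hbar_1^k\partial^{r-k}$ gives a leading term $(E_{n,n}-E_{1,1})\otimes\partial^r$ (traceless) and subleading terms $-\binom{r}{k}\hbar_1^k E_{1,1}\otimes\partial^{r-k}$ with trace in $\hbar_1\CC[[\hbar_1]]$; the extra $\delta_{0,r} c_\D$ is allowed. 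The $n=1$ case is verified by reading off the explicit $\hbar_1$-weights from the same formulas.

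The main obstacle is the careful bookkeeping in the $n=1$ case, where the subspace is defined by $\hbar_1$-weights depending on $|l|$ rather than by a uniform trace condition, together with confirming that the cocycle $\phi_{\D_{\hbar_1}^{(n),'}}$ always contributes exclusively to the central line $\CC[[\hbar_1]]\cdot c_\D$ and never perturbs either the trace condition or the $\hbar_1$-valuation estimate.
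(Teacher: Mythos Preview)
The paper states this lemma without proof, so there is nothing to compare against; your direct verification is exactly the sort of argument the reader is expected to supply, and for $n\geq 2$ it is correct as written.

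For $n=1$ your bookkeeping has a small hole. The weight function is not uniformly $|l|-1$: for $l=0$ the defining exponent is $1$, and when the output has $x$-exponent $l+m=0$ the target valuation is again $1$, not $|l+m|-1=-1$. Thus the chain of inequalities $(|l|-1)+(|m|-1)+1\geq |l+m|-1$ covers only the case $l,m,l+m$ all nonzero. You should treat the remaining cases separately: if $l=0$ (so the first element is $\hbar_1\partial^r$) the prefactor already has valuation $1$ and the commutator adds another, giving total valuation $\geq |m|+1$ when $m\neq 0$, or the bracket vanishes when $m=0$; if $l,m\neq 0$ but $l+m=0$ then $|l|+|m|\geq 2$ so your bound $|l|+|m|-1\geq 1$ is exactly what is needed. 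The same care applies to the image check: $\vartheta^{(1)}(x^-_{0,r})=x^{-1}\partial^r$ must be rewritten in the normal form $\partial^{r'}x^{-1}$ before reading off the weight, and $\vartheta^{(1)}(\xi_{0,r})=\partial^r-(\partial+\hbar_1)^r+\delta_{0,r}c_\D$ does land in $\hbar_1\CC[[\hbar_1]][\partial]\oplus\CC[[\hbar_1]]c_\D$, but you should say this explicitly rather than deferring to ``reading off the explicit $\hbar_1$-weights''. None of this changes the strategy; it is just the careful bookkeeping you yourself flagged as the main obstacle.
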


In fact, we have the following result:

\begin{thm}\label{limit_2}
$\vartheta^{(n)}$ gives rise to an isomorphism
 $\vartheta^{(n)}\colon \Y^{(n)}_{\hbar_1}\iso U(\bar{\D}^{(n),0}_{\hbar_1})$.
\end{thm}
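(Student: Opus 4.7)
The plan is to reduce Theorem~\ref{limit_2} to a Lie-algebra isomorphism and then verify the latter by combining a generating-set argument with a faithful-representation argument. My first observation is that every defining relation of $\Y^{(n),'}_{\hbar_1,\hbar_2}$ becomes a pure commutator relation upon setting $\hbar_2=0$: the anticommutator terms $\pm a_{i,j}h\{\cdot,\cdot\}$, $\pm h_2\{\cdot,\cdot\}$ and $\pm\sigma_3\{\cdot,\cdot\}$ entering (Y2)--(Y4) in the various cases all carry a factor of $\hbar_2$ and hence vanish. Therefore $\Y^{(n),'}_{\hbar_1}=U(\mathfrak{y}^{(n)})$ for a topological Lie algebra $\mathfrak{y}^{(n)}$ over $\CC[[\hbar_1]]$ presented by $\{x^\pm_{i,r},\xi_{i,r}\}$ subject to the $\hbar_2=0$ reductions of (Y0)--(Y5). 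By Proposition~\ref{limit_a} and the universal property of $U(\cdot)$, $\vartheta^{(n)}$ factors as $U(\tilde\vartheta^{(n)})$ for a Lie homomorphism $\tilde\vartheta^{(n)}:\mathfrak{y}^{(n)}\to\bar{\D}^{(n),0,'}_{\hbar_1}$, and the theorem is equivalent to $\tilde\vartheta^{(n)}$ being a Lie isomorphism.

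For surjectivity I proceed as follows. Varying $r$ and applying the binomial formula, the images $\tilde\vartheta^{(n)}(x^+_{i,r})=E_{i,i+1}\otimes(\partial+(1-i/n)\hbar_1)^r$ for $i\ne 0$ yield $E_{i,i+1}\otimes\partial^r$ for every $r\in\ZZ_+$. Iterated Lie brackets among these, the ``periodic'' image $\tilde\vartheta^{(n)}(x^+_{0,r})=E_{n,1}\otimes\partial^r x$, and their $x^-$-analogs produce all $E_{i,j}\otimes P(\partial)x^k$ with $i\ne j$, $P\in\CC[[\hbar_1]][\partial]$, $k\in\ZZ$. Further brackets of $x^+$'s against $x^-$'s span the diagonal pieces $E_{i,i}\otimes P(\partial)x^k$ satisfying the trace-divisibility by $\hbar_1$ built into $\bar{\D}^{(n),0,'}_{\hbar_1}$, and the central element $c_\D$ is recovered as $\tilde\vartheta^{(n)}(\sum_{i\in[n]}\xi_{i,0})$. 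The $n=1$ case requires extra book-keeping: a direct calculation shows $[\partial^r x,\partial^s x]\in\hbar_1\cdot\CC[[\hbar_1]][\partial]\cdot x^2$, matching precisely the $\hbar_1^{s-1}$ scaling in the definition of $\bar{\D}^{(1),0,'}_{\hbar_1}$.

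The hard part is injectivity. My plan is to invoke the faithfulness statement announced in Section~2: the direct sum of all legible finite tensor products of Fock modules is faithful over $\Y^{(n),'}_{\hbar_1,\hbar_2}$. Reducing modulo $\hbar_2$ and using the flat $\CC[[\hbar_2]]$-deformation statement of the same section, one obtains a faithful $\Y^{(n),'}_{\hbar_1}$-module $V$; the Fock formulas of Propositions~\ref{Fock_a} and~\ref{Fock_ar} evaluated at $\hbar_2=0$ manifestly arise from the action of $\bar{\D}^{(n),0,'}_{\hbar_1}$ on $V$ via $\tilde\vartheta^{(n)}$, so any element of $\ker\tilde\vartheta^{(n)}$ acts by zero on $V$ and must vanish. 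As a safety net I also keep in mind a graded Nakayama argument: $\tilde\vartheta^{(n)}$ is compatible with the $\ZZ_+$-grading defined by $\deg(\hbar_1)=\deg(\partial)=1$ and $\deg(x^\pm_{i,r})=\deg(\xi_{i,r})=r$; its restriction at $\hbar_1=0$ identifies both sides with a standard central extension of $\ssl_n\otimes\CC[\partial,x^{\pm 1}]$ (and its $n=1$ analog), and the resulting isomorphism at $\hbar_1=0$ lifts to all orders degree by degree. Either route ultimately hinges on topological freeness of $\mathfrak{y}^{(n)}$ over $\CC[[\hbar_1]]$, i.e.\ on a PBW-type basis for the Lie algebra presented by (Y0)--(Y5) at $\hbar_2=0$, which is itself established by evaluating $\mathfrak{y}^{(n)}$ on the same family of Fock modules.
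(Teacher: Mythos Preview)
The paper does not supply a proof of Theorem~\ref{limit_2}; it is stated and then used as input to Theorem~\ref{flatness} and Corollary~\ref{faithfulness}. So there is no paper-argument to compare against, but there is a real gap in your proposal that you should be aware of.

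Your reduction to a Lie-algebra isomorphism and your surjectivity argument are fine. The difficulty is injectivity, and here your primary route is circular. You invoke the faithfulness of the Fock-module sum (Corollary~\ref{faithfulness}) and the flatness of $\Y^{(n),'}_{\hbar_1,\hbar_2}$ over $\CC[[\hbar_2]]$ (Theorem~\ref{flatness}), but in the paper's logical order both of these are downstream of Theorem~\ref{limit_2}: the proof of Theorem~\ref{flatness} opens with the reduction ``it suffices to provide a faithful $U(\bar{\D}^{(n),0,'}_{\hbar_1})$-representation which admits a flat deformation\dots,'' a reduction that already presupposes $\Y^{(n),'}_{\hbar_1}\cong U(\bar{\D}^{(n),0,'}_{\hbar_1})$. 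More concretely, the $\hbar_2=0$ Fock modules you want to use are, by your own observation, pullbacks along $\tilde\vartheta^{(n)}$; any element of $\ker\tilde\vartheta^{(n)}$ therefore acts by zero on them automatically, so their faithfulness \emph{for $\Y^{(n),'}_{\hbar_1}$} is exactly the injectivity statement you are trying to prove.

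Your ``safety net'' does not escape this. A graded-Nakayama argument would require an independent identification of $\mathfrak{y}^{(n)}/\hbar_1\mathfrak{y}^{(n)}$ (equivalently, of the Lie algebra presented by (Y0)--(Y5) at $\hbar_1=\hbar_2=0$) with the $\hbar_1=0$ reduction of $\bar{\D}^{(n),0,'}_{\hbar_1}$, and this is itself a nontrivial presentation-by-generators-and-relations theorem of the same flavor as the one you are proving. Your concluding sentence concedes that both routes hinge on a PBW-type statement ``established by evaluating $\mathfrak{y}^{(n)}$ on \dots\ Fock modules,'' but evaluating on modules that factor through the target can only give a \emph{lower} bound on the size of $\mathfrak{y}^{(n)}$; what injectivity needs is an \emph{upper} bound, namely a spanning set produced directly from the relations that $\tilde\vartheta^{(n)}$ carries onto a basis of $\bar{\D}^{(n),0,'}_{\hbar_1}$. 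That step is missing.
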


Actually, a more general result is proved in~\cite[Theorem 2.2]{T2}:

\begin{thm}\label{limit_2.1}
For $h_1\in \CC^\times$, the same formulas define an isomorphism
 $\vartheta^{(n)}\colon \Y^{(n)}_{h_1}\iso U(\bar{\D}^{(n)}_{h_1})$.
\end{thm}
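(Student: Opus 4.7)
The plan is to follow the strategy used for Theorem \ref{limit_2} in the formal setting, adapting it to the complex-parameter setting while exploiting the hypothesis $h_1 \in \CC^\times$ to eliminate the trace-divisibility restrictions that were built into $\bar{\D}^{(n),0,'}_{\hbar_1}$.

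First I would verify that $\vartheta^{(n)}$ is well-defined at complex $h_1$. The matrix coefficients appearing in Proposition \ref{limit_a} are polynomial in $\hbar_1$, so specializing $\hbar_1 \mapsto h_1$ produces well-defined elements of $U(\bar{\D}^{(n)}_{h_1})$. These elements satisfy the defining relations of $\Y^{(n)}_{h_1}$ (i.e., (Y0--Y5) at $h_2=0$, $h_3=-h_1$) because those relations are themselves polynomial identities in $\hbar_1$, whose validity has already been established in the formal setting by Theorem \ref{limit_2}. Hence a homomorphism $\vartheta^{(n)}: \Y^{(n)}_{h_1} \to U(\bar{\D}^{(n)}_{h_1})$ is defined by the same formulas.

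For surjectivity, the key observation is that the trace condition $\tr(A_{r,l}) \in \hbar_1 \CC[[\hbar_1]]$ defining $\bar{\D}^{(n),0,'}_{\hbar_1} \subset \bar{\D}^{(n),'}_{\hbar_1}$ becomes vacuous upon specialization at $\hbar_1 = h_1 \ne 0$, since $h_1 \CC = \CC$. Thus, every element of $\bar{\D}^{(n)}_{h_1}$ arises as a nonzero scalar multiple of the specialization of an element of $\bar{\D}^{(n),0,'}_{\hbar_1}$, and surjectivity follows from the surjectivity part of Theorem \ref{limit_2}. The case $n=1$ is handled analogously, using that the factors $\hbar_1^{s-1}$ appearing in the spanning set of $\bar{\D}^{(1),0,'}_{\hbar_1}$ become nonzero scalars at $\hbar_1 = h_1 \ne 0$.

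The main obstacle is injectivity. My approach is to construct a faithful $\Y^{(n)}_{h_1}$-module that factors through $\vartheta^{(n)}$. Concretely, I would specialize at $h_2=0$ the tensor products of Fock modules ${}^a F^{\pp}(\vv)$ from Proposition \ref{Fock_ar}. Each such module carries a natural $U(\bar{\D}^{(n)}_{h_1})$-action, obtained by realizing $\bar{\D}^{(n)}_{h_1}$ as matrix differential operators on polynomial functions of the partition data; a direct comparison of generating series shows that the specialization of the $\Y^{(n)}_{h_1}$-action coincides with the pull-back via $\vartheta^{(n)}$ of this natural $U(\bar{\D}^{(n)}_{h_1})$-action. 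Adapting the faithfulness statement from Section 2 by taking the direct sum over all legible $(\pp,\vv)$, one obtains a faithful $\Y^{(n)}_{h_1}$-module, which combined with surjectivity forces $\ker \vartheta^{(n)} = 0$.

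The most delicate step is verifying that the faithfulness argument survives the specialization $h_2 = 0$: the matrix coefficients in Proposition \ref{Fock_ar} involve $\phi(z) = (z-h_2)/z$, which degenerates to $1$, so one must check that enough information remains in the $\xi_{i,r}$ and $x^\pm_{i,r}$ actions to separate arbitrary elements of $\Y^{(n)}_{h_1}$. This reduces to a density/separation statement for evaluations at the spectral parameters $\lambda_s h_1 + (s-1)h_3 + v_a$, which is precisely where the hypothesis $h_1 \ne 0$ enters in an essential way; the excluded value $h_1 = 0$ is genuinely singular, as $\bar{\D}^{(n)}_0$ becomes commutative in the $\partial$-direction.
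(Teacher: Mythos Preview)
The paper states Theorem~\ref{limit_2.1} (like Theorems~\ref{limit_1}, \ref{limit_1.1}, \ref{limit_2}) without proof, so there is no argument in the text to compare against; I will evaluate your outline on its own merits.

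Your treatment of well-definedness and surjectivity is correct: the formulas of Proposition~\ref{limit_a} are polynomial in $\hbar_1$ and specialize cleanly, and the trace constraints defining $\bar{\D}^{(n),0,'}_{\hbar_1}$ become vacuous once $\hbar_1$ is inverted, so surjectivity onto all of $\bar{\D}^{(n)}_{h_1}$ follows from the formal result.

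The injectivity argument, however, is circular as written. You want to show that the direct sum of degenerate Fock modules is faithful \emph{as a $\Y^{(n)}_{h_1}$-module}. But once you have established that the action factors through $\vartheta^{(n)}$, faithfulness as a $\Y^{(n)}_{h_1}$-module is equivalent to the conjunction of (i) faithfulness as a $U(\bar{\D}^{(n)}_{h_1})$-module and (ii) injectivity of $\vartheta^{(n)}$ --- which is exactly what you are trying to prove. The spectral-separation argument you sketch in the last paragraph (varying $v_a$ and reading off eigenvalues of $\xi_{i,r}$) can at best establish (i); it cannot by itself rule out a nonzero element of $\ker\vartheta^{(n)}$, since any such element automatically acts as zero on every module pulled back through $\vartheta^{(n)}$. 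Appealing to Corollary~\ref{faithfulness} does not help either: that statement lives over the ring $R$ (a localization of $\CC[[\hbar_1,\hbar_2]]$), and there is no specialization map $R\to\CC$ sending $\hbar_1$ to a nonzero complex number.

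What is missing is an \emph{a priori} upper bound on the size of $\Y^{(n)}_{h_1}$. Since the relations at $h_2=0$ are Lie-type, one has $\Y^{(n)}_{h_1}=U(\mathfrak{y}_{h_1})$ for a Lie algebra $\mathfrak{y}_{h_1}$ presented by generators and relations, and $\vartheta^{(n)}$ is induced by a surjection $\mathfrak{y}_{h_1}\twoheadrightarrow\bar{\D}^{(n)}_{h_1}$. Injectivity then reduces to checking that each graded piece of $\mathfrak{y}_{h_1}$ (for the principal or root-lattice grading) has dimension at most that of the corresponding piece of $\bar{\D}^{(n)}_{h_1}$; this is a concrete combinatorial/straightening argument using relations (Y2)--(Y5) at $h_2=0$, and is where $h_1\ne 0$ genuinely enters (so that the shifted commutators in (Y2),(Y3) let one reduce to a spanning set of the right size). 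Once such an upper bound is in hand, surjectivity finishes the proof without any appeal to Fock modules.
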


Since all the defining relations of $\Y^{(n)}_{\hbar_1}$ are of \emph{Lie type}, it is
isomorphic to an enveloping algebra of the Lie algebra generated by
$\{x^\pm_{i,r},\xi_{i,r}\}_{i\in [n]}^{r\in \NN}$ with the same defining relations.
Thus, Theorem~\ref{limit_2} provides a presentation of the Lie algebra
$\bar{\D}^{(n),0}_{\hbar_1}$ by generators and relations.


\subsection{Flatness and faithfulness}
$\ $

The main result of this section is:

\begin{thm}\label{flatness}
(a) The algebra $\U^{(m),\omega}_{\hbar_1,\hbar_2}$ is a flat
$\CC[[\hbar_2]]$-deformation of $\U^{(m),\omega}_{\hbar_1}\simeq U(\bar{\dd}^{(m),0}_{\q_1^m})$.

\noindent
(b) The algebra $\Y^{(n)}_{\hbar_1,\hbar_2}$ is a flat
$\CC[[\hbar_2]]$-deformation of $\Y^{(n)}_{\hbar_1}\simeq U(\bar{\D}^{(n),0}_{\hbar_1})$.
\end{thm}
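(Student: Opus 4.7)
The plan is to deduce flatness in both (a) and (b) from faithfulness of a common natural action on a free $\CC[[\hbar_1,\hbar_2]]$-module, bypassing an explicit PBW-basis argument. Write $A$ for either $\U^{(m),\omega,'}_{\hbar_1,\hbar_2}$ or $\Y^{(n),'}_{\hbar_1,\hbar_2}$ and $A_0 = A/\hbar_2 A$ for the corresponding limit algebra. Since $\CC[[\hbar_2]]$ is a DVR, flatness of $A$ over $\CC[[\hbar_2]]$ is equivalent to $\hbar_2$ being a non-zero-divisor in $A$, so it suffices to embed $A$ into a torsion-free $\CC[[\hbar_2]]$-module.

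I would first upgrade the Fock modules of Propositions \ref{Fock_mr} and \ref{Fock_ar} to the formal setting: for parameter tuples $\uu$ (respectively $\vv$) chosen outside the resonance locus the matrix coefficients lie in $\CC[[\hbar_1,\hbar_2]]$, and the partition basis $\{|\boldsymbol{\lambda}\rangle\}$ exhibits each $F^{\pp}(\uu)$ (respectively ${}^aF^{\pp}(\vv)$) as a free $\CC[[\hbar_1,\hbar_2]]$-module; the defining relations of $A$ are verified on these modules by the same rational identities as in Section~1. Letting $M$ be the direct sum of all such admissible tensor product Fock modules, we obtain a $\CC[[\hbar_1,\hbar_2]]$-algebra homomorphism $\rho : A \to \mathrm{End}(M)$ with $M$ free, hence $\hbar_2$-torsion-free, over $\CC[[\hbar_1,\hbar_2]]$.

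The crux is injectivity of $\rho$. Reducing modulo $\hbar_2$ yields an action $\bar\rho : A_0 \to \mathrm{End}(M/\hbar_2 M)$; under the isomorphisms of Theorems \ref{limit_1} and \ref{limit_2}, this identifies with the natural action of $U(\bar{\dd}^{(m),0,'}_{\q_1^m})$ (respectively $U(\bar{\D}^{(n),0,'}_{\hbar_1})$) on a direct sum of classical Fock tensor products, which one expects to be faithful by a density argument in the parameters $\uu$ or $\vv$. Given $a \in \ker(\rho)$, its image in $A_0$ lies in $\ker(\bar\rho) = 0$, so $a = \hbar_2 a_1$ for some $a_1 \in A$; since $M$ is $\hbar_2$-torsion-free, $\rho(a_1) = 0$, and iterating yields $a \in \bigcap_k \hbar_2^k A = 0$ by $(\hbar_2)$-adic separatedness of $A$. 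Hence $\rho$ is injective, $A$ embeds in the torsion-free module $\mathrm{End}(M)$, and flatness follows.

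The main obstacle is establishing the faithfulness of the reduced action $\bar\rho$: one must show that no nonzero element of $U(\bar{\dd}^{(m),0,'}_{\q_1^m})$ or $U(\bar{\D}^{(n),0,'}_{\hbar_1})$ acts as zero on every legible tensor product of classical Fock modules. For a single Fock module with $n=1$ this was carried out in \cite{T} using the semi-infinite wedge description; extending the argument to arbitrary tensor products and arbitrary $n \ge 1$ is the analytic heart of the proof. The key input will be that the matrix coefficients of the Lie-algebra generators on distinct Fock-parameter families are linearly independent meromorphic functions of $\uu$ (respectively $\vv$), so that ranging the parameters through a sufficiently rich set separates any given PBW monomial in the limit algebra.
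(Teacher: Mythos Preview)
Your overall architecture coincides with the paper's: exhibit a representation of $A$ on a direct sum of (tensor products of) Fock modules, reduce modulo $\hbar_2$, and deduce flatness from faithfulness of the limit action together with $\hbar_2$-torsion-freeness of the representation space. Your iteration argument for injectivity of $\rho$ is sound, given $(\hbar_1,\hbar_2)$-adic separatedness.

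There is, however, one technical oversight and one genuine gap relative to the paper. First, the matrix coefficients in Propositions~\ref{Fock_mr} and~\ref{Fock_ar} do \emph{not} lie in $\CC[[\hbar_1,\hbar_2]]$: with $h_i=\hbar_i/(mn)$ and $v_a$ in the maximal ideal, the denominators $x^{(l)}_i-x^{(a)}_s$ are non-units, so the Fock modules are not modules over the formal algebra as stated. The paper fixes this by passing to the localization $R$ of $\CC[[\hbar_1,\hbar_2]]$ at the multiplicative set generated by all $\hbar_1-\nu\hbar_2$, $\nu\in\CC$; since $R$ is still a domain with $R/(\hbar_2)\simeq\CC((\hbar_1))$, your torsion-freeness argument goes through unchanged over $R$.

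Second, the faithfulness of $\bar\rho$ is indeed the heart of the matter, and your proposed ``density of parameters / linear independence of meromorphic matrix coefficients'' argument is left as a plan rather than a proof. The paper's mechanism is more concrete and somewhat different: it constructs explicit Lie algebra homomorphisms
\[
\tau_u:U_{\bar R}(\bar{\dd}^{(m),0,'}_{\q_1^m})\to U_{\bar R}(\bar{\gl}'_\infty),\qquad
\varsigma_v:U_{\bar R}(\bar{\D}^{(n),0,'}_{\hbar_1})\to U_{\bar R}(\bar{\gl}'_\infty),
\]
shows that the $\hbar_2\to 0$ limit of each Fock module $F^{p,'}_R(u)$ (resp.\ ${}^aF^{p,'}_R(v)$) is the pullback via $\tau_u$ (resp.\ $\varsigma_v$) of a fundamental semi-infinite wedge representation $F^p_\infty$ of $\bar{\gl}'_\infty$, and then reduces faithfulness of the total action to the specialization $\hbar_1=0$, where it is elementary. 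This $\bar{\gl}'_\infty$ bridge replaces the analytic separation-of-parameters argument you sketch, and is what actually closes the proof.
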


\begin{proof}[Proof of Theorem~\ref{flatness}]
$\ $

To prove Theorem~\ref{flatness}, it suffices to provide a faithful
$U(\bar{\dd}^{(m),0}_{\q_1^m})$-representation
(resp. $U(\bar{\D}^{(n),0}_{\hbar_1})$-representation) which admits
a flat deformation to a representation of $\U^{(m),\omega}_{\hbar_1,\hbar_2}$
(resp. $\Y^{(n)}_{\hbar_1,\hbar_2}$). To make use of the representations from
Sections~\ref{section Fock},~\ref{section tensor Fock}, we will need to work
not over $\CC[[\hbar_1,\hbar_2]]$, but rather over the ring $R$, defined as
a localization of  $\CC[[\hbar_1,\hbar_2]]$ by the multiplicative set
  $\{(\hbar_1-\nu_1 \hbar_2)\cdots (\hbar_1-\nu_s \hbar_2)\}_{s\in \ZZ_{>0}}^{\nu_r\in  \CC}$.
Note that $\bar{R}:=R/(\hbar_2)\simeq \CC((\hbar_1))$.
We define
  $$\U^{(m),\omega}_{R}:=\U^{(m),\omega}_{\hbar_1,\hbar_2}\otimes_{\CC[[\hbar_1,\hbar_2]]} R,\ \ \
    \Y^{(n)}_R:=\Y^{(n)}_{\hbar_1,\hbar_2}\otimes_{\CC[[\hbar_1,\hbar_2]]} R.$$

Consider the Lie algebra
  $\gl_{\infty}:=\{\sum_{k,l\in \ZZ}a_{k,l} E_{k,l}| a_{k,l}\in \CC[[\hbar_1]]\
   \mathrm{and}\ a_{k,l}=0\ \mathrm{for}\ |k-l|\gg 0\}.$
Let
  $\bar{\gl}_{\infty}:=\gl_\infty\oplus \CC[[\hbar_1]]\cdot \kappa$
be the central extension of this Lie algebra via the 2-cocycle
  $$\phi_{\gl_\infty}\left(\sum a_{k,l}E_{k,l}, \sum b_{k,l}E_{k,l}\right)=
    \sum_{k<0\leq l} a_{k,l}b_{l,k}-\sum_{l<0\leq k}a_{k,l}b_{l,k}.$$
For any $u,Q\in \CC[[\hbar_1]]^\times$, consider the homomorphism
$\tau_u\colon U_{\bar{R}}(\bar{\dd}^{(m),0}_Q)\to U_{\bar{R}}(\bar{\gl}_{\infty})$
such that
  $$E_{\alpha,\beta}\otimes Z^kD^l\mapsto \sum_{a\in \ZZ} u^{l} Q^{a l} E_{m(a+k)-\alpha, ma-\beta}+
    \delta_{k,0}\delta_{\alpha,\beta}\frac{1-u^lQ^l}{1-Q^l}\kappa,\ c_{\dd}\mapsto -\kappa,$$
where we set $\frac{1-u^lQ^l}{1-Q^l}:=0$ if $Q^l=1$. In what follows,
we choose $Q:=\q_1^m=\omega^m\exp(\hbar_1)$.

Let
 $\varpi_u\colon \U^{(m),\omega}_{\bar{R}}\to U_{\bar{R}}(\bar{\gl}_{\infty})$
be the composition of $\theta^{(m)}$ and $\tau_u$.
For any $i\in [m]$, we get
  $$\varpi_u(e_i(z))=\sum_{a \in \ZZ} \delta(\q_1^{ma+m-i}u/z) E_{ma-i,ma-i-1},\
    \varpi_u(f_i(z))=\sum_{a \in \ZZ} \delta(\q_1^{ma+m-i}u/z) E_{ma-i-1,ma-i}.$$

For any $0\leq p\leq m-1$, let $F^p_\infty$ be the $(-p-1)$-th fundamental representation
of $\bar{\gl}_{\infty}$. It is realized on $\wedge^{-p-1+\infty/2} \CC^\infty$ with
the highest weight vector $w_{-p-1}\wedge w_{-p-2}\wedge w_{-p-3}\wedge \cdots$
(here $\CC^\infty$ is a $\CC$-vector spaces with the basis $\{w_k\}_{k\in \ZZ}$).
Comparing the formulas for the Fock $\U^{(m),\omega}_{R}$-module $F^{p}_R(u)$ with those
for the $\bar{\gl}_{\infty}$-action on $F^{p}_\infty$, we see that $F^{p}_R(u)$ degenerates
to $\varpi_{\q_1^{p-m}u}^*(F^{p}_\infty)$ (the intertwining linear map is given by
$|\lambda\rangle \mapsto w_{-p+\lambda_1-1}\wedge w_{-p+\lambda_2-2}\wedge \cdots$).
Moreover, it is easy to see that any finite tensor product
$F^{p_1}_R(u_1)\otimes\cdots\otimes F^{p_r}_R(u_r)$ (with $u_1,\ldots,u_r$ not in resonance)
degenerates to
$\varpi_{\q_1^{p_1-m}u_1}^*(F^{p_1}_\infty)\otimes\cdots\otimes \varpi_{\q_1^{p_r-m}u_r}^*(F^{p_r}_\infty)$.
It remains to prove that
  $\bigoplus_{r\geq 1}\bigoplus^{p_1,\ldots,p_r\in [m]}_{u_1,\ldots,u_r\in \CC[[\hbar_1]]^\times}
   \tau_{u_1}^*(F^{p_1}_\infty)\otimes \cdots \otimes \tau_{u_r}^*(F^{p_r}_\infty)$
is a faithful representation of $U(\bar{\dd}^{(m),0}_Q)$.
This follows from the corresponding statement after factoring by $(\hbar_1)$,
where it is obvious.

In the case of $\Y^{(n)}_{\bar{R}}$, we use the homomorphism
  $\varsigma_v\colon U_{\bar{R}}(\bar{\D}^{(n),0}_{\hbar_1})
   \to U_{\bar{R}}(\bar{\gl}_{\infty})$
defined by
\begin{equation*}
  E_{\alpha,\beta}\otimes x^k\partial^r\mapsto \sum_{a\in \ZZ} (v+a \hbar_1)^r E_{n(a+k)-\alpha, na-\beta}-
    \delta_{k,0}\delta_{\alpha,\beta}c_r\kappa,\ c_{\D}\mapsto -\kappa,
\end{equation*}
with the constants $c_N\in {\bar{R}}$ determined recursively from
 $\sum_{a=1}^N \binom{N}{a}\hbar_1^ac_{N-a}=(v+\hbar_1)^N$ for $N\geq 1$.
The rest of the arguments are the same.
This completes our proof of Theorem~\ref{flatness}.
\end{proof}

The above proof also implies the following result:

\begin{cor}\label{faithfulness}
(a) The following is a faithful $\U^{(m),\omega}_{R}$-representation:
  $$\mathbf{F}_R:=\bigoplus_{r\geq 1}\bigoplus^{p_1,\ldots,p_r\in [m]}_{u_1,\ldots,u_r\in \CC[[\hbar_1]]^\times-\mathrm{not\ in\ resonance}}
    F^{p_1}_R(u_1)\otimes \cdots\otimes F^{p_r}_R(u_r).$$

\noindent
(b) The following is a faithful $\Y^{(n)}_{R}$-representation:
  $$^{a}\mathbf{F}_R:=\bigoplus_{r\geq 1}\bigoplus^{p_1,\ldots,p_r\in [n]}_{v_1,\ldots,v_r\in \hbar_1\CC[[\hbar_1]]-\mathrm{not\ in\ resonance}}
    {^{a}F}^{p_1}_R(v_1)\otimes \cdots\otimes {^{a}F}^{p_r}_R(v_r).$$
\end{cor}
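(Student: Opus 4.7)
The plan is to deduce both faithfulness statements from Theorem~\ref{flatness} together with the classical-limit faithfulness already established inside its proof, via a standard $\hbar_2$-adic lifting argument. First I would record two preliminary facts. Each Fock module $F^{p,'}_R(u)$ and each ${^{a}F}^{p,'}_R(v)$ is, by inspection of its defining formulas, a free $R$-module on the basis $\{|\boldsymbol{\lambda}\rangle\}$; consequently $\mathbf{F}_R$ and ${^a}\mathbf{F}_R$ are $R$-flat, and in particular $\hbar_2$-torsion free. Second, $\U^{(m),\omega,'}_R$ and $\Y^{(n),'}_R$ are $\hbar_2$-adically separated, being localizations of the $\hbar_2$-adically complete topological algebras $\U^{(m),\omega,'}_{\hbar_1,\hbar_2}$ and $\Y^{(n),'}_{\hbar_1,\hbar_2}$ by elements whose images modulo $\hbar_2$ are units in $\bar{R}\simeq \CC((\hbar_1))$.

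Next, to prove part (a), suppose for contradiction that a nonzero element $x\in \U^{(m),\omega,'}_R$ annihilates $\mathbf{F}_R$. By $\hbar_2$-adic separatedness there is a maximal $n\geq 0$ with $x\in \hbar_2^n \U^{(m),\omega,'}_R$; write $x=\hbar_2^n y$ with $y\notin \hbar_2 \U^{(m),\omega,'}_R$. By the $\hbar_2$-torsion-freeness of $\mathbf{F}_R$, the element $y$ also annihilates $\mathbf{F}_R$, so the reduction $\bar y\in \U^{(m),\omega,'}_{\bar R}$ annihilates $\mathbf{F}_R/\hbar_2\mathbf{F}_R$. The degeneration computations in the proof of Theorem~\ref{flatness}, combined with Theorem~\ref{limit_1}, identify $\U^{(m),\omega,'}_{\bar R}\simeq U_{\bar R}(\bar{\dd}^{(m),0,'}_{\q_1^m})$ and realize $\mathbf{F}_R/\hbar_2\mathbf{F}_R$ as the direct sum $\bigoplus \varpi^*_{\q_1^{p_1-m}u_1}(F^{p_1}_\infty)\otimes\cdots\otimes \varpi^*_{\q_1^{p_r-m}u_r}(F^{p_r}_\infty)$ indexed by exactly the tuples appearing in $\mathbf{F}_R$; this direct sum was already shown inside the proof of Theorem~\ref{flatness} to be a faithful $U_{\bar R}(\bar{\dd}^{(m),0,'}_{\q_1^m})$-module. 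Hence $\bar y=0$, contradicting the maximality of $n$. Part (b) is established by the same scheme, substituting $\vartheta^{(n)}$, $\varsigma_v$, ${^{a}F}^{p,'}_R(v)$, and Theorem~\ref{limit_2} for $\theta^{(m)}$, $\tau_u$, $F^{p,'}_R(u)$, and Theorem~\ref{limit_1} respectively.

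The step I expect to be the main obstacle is making rigorous the match between the "not in resonance" parameter sets that index the summation in $\mathbf{F}_R$ and ${^a}\mathbf{F}_R$ at the formal level, and the unconstrained parameter sets that appear in the classical faithful direct sums appearing in the proof of Theorem~\ref{flatness}. Since non-resonance is precisely the condition that expressions of the form $1-\q_1^a u_i/u_j$ (respectively $(a-b)h_1+v_i-v_j$) be invertible, and these are built from elements of the multiplicative set inverted to form $R$, the two summation ranges coincide after base change to $R$ and the faithfulness transfers cleanly. A smaller point to verify is that localization by this multiplicative set preserves $\hbar_2$-adic separation, which is routine because every inverted element has a unit leading term modulo $\hbar_2$.
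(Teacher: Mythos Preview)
Your proposal is correct and is precisely the standard $\hbar_2$-adic lifting argument that the paper relies on implicitly: in the paper the Corollary is stated without proof, as an immediate byproduct of the proof of Theorem~\ref{flatness}, where flatness of the algebra and faithfulness of the classical-limit module were established simultaneously. Your write-up simply makes explicit the routine step (torsion-free module, separated algebra, faithful specialization $\Rightarrow$ faithful module) that the paper leaves to the reader, and your handling of the ``not in resonance'' bookkeeping and of separatedness under localization is adequate.
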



\section{Main Result}\label{section main result}

Fix $m,n\geq 1$ and an $mn$-th root of unity $\omega_{mn}=\exp(2\pi \kk\mathbf{i}/mn)$
with $\kk\in \ZZ, \gcd(\kk,n)=1$, whereas $\mathbf{i}$ denotes $\mathbf{i}=\sqrt{-1}$.
Following~\cite{GTL}, we construct a $\CC[[\hbar_1,\hbar_2]]$-algebra homomorphism
  $$\Phi^{\omega_{mn}}_{m,n}\colon \widehat{\U}^{(m),\omega_{mn}}_{\hbar_1,\hbar_2}
    \longrightarrow \widehat{\Y}^{(mn)}_{\hbar_1,\hbar_2}$$
between the appropriate completions of the two algebras of interest.


\subsection{Homomorphism $\Phi^{\omega_{mn}}_{m,n}$}
$\ $

To state our main result, we introduce the following notations (compare to~\cite{GTL}):

\noindent
$\bullet$
Let $\widehat{\Y}^{(mn)}_{\hbar_1,\hbar_2}$ be the completion of $\Y^{(mn)}_{\hbar_1,\hbar_2}$
with respect to the $\NN$-grading from Section~\ref{section formal Yangian}.

\noindent
$\bullet$
Let $\mathfrak{J}\subset \U^{(m),\omega_{mn}}_{\hbar_1,\hbar_2}$ be the kernel
of the composition
  $$\U^{(m),\omega_{mn}}_{\hbar_1,\hbar_2}\overset{\hbar_2\to 0}\longrightarrow
    \U^{(m),\omega_{mn}}_{\hbar_1}\hookrightarrow U(\bar{\dd}^{(m)}_{\q_1^m})\longrightarrow
    U(\bar{\dd}^{(m)}_{\q_1^m}/\MM_m\otimes (D^n-1,\hbar_1)),$$
where the latter quotient is as in Section~\ref{section completions}. We define
  $$\widehat{\U}^{(m),\omega_{mn}}_{\hbar_1,\hbar_2}:=
    \underset{\longleftarrow}\lim\ \U^{(m),\omega_{mn}}_{\hbar_1,\hbar_2}/\mathfrak{J}^r$$
to be the completion of $\U^{(m),\omega_{mn}}_{\hbar_1,\hbar_2}$ with respect
to the ideal $\mathfrak{J}$.

\noindent
$\bullet$
For $i',j'\in [mn]$, we write $i'\equiv j'$ if $i'-j'$ is divisible by  $m$.

\noindent
$\bullet$
For $i\in [m], i'\in [mn]$, we write $i'\equiv i$ if $i=i'\ \mathrm{mod}\ m$.

\noindent
$\bullet$
For $i'\in [mn]$, we define $\xi_{i'}(z)$ as in Section~\ref{section Yangian addit}:
  $$\xi_{i'}(z):=1+\frac{\hbar_2}{mn}\sum_{r\geq 0}\xi_{i',r}z^{-r-1}\in \Y^{(mn)}_{\hbar_1,\hbar_2}[[z^{-1}]].$$

\noindent
$\bullet$
For $i'\in [mn], r\in \NN$, we define $t_{i',r}\in \Y^{(mn)}_{\hbar_1,\hbar_2}$ via
  $$\sum_{r\geq 0}t_{i',r} z^{-r-1}=t_{i'}(z):=\log(\xi_{i'}(z)).$$

\noindent
$\bullet$
Consider the \emph{inverse Borel transform}
  $$B\colon z^{-1}\CC[[z^{-1}]]\longrightarrow \CC[[w]]\ \mathrm{defined\ by}\
    \sum_{r=0}^\infty \frac{a_r}{z^{r+1}}\mapsto \sum_{r=0}^\infty \frac{a_r}{r!}w^r.$$

\noindent
$\bullet$
For $i'\in [mn]$, we define $B_{i'}(w):=B(t_{i'}(z))\in \hbar_2\Y^{(mn)}_{\hbar_1,\hbar_2}[[w]]$.

\noindent
$\bullet$
For $i',j'\in [mn]$ such that $i'\equiv j'$, we define $H_{i',j'}(v)\in 1+v\CC[[v]]$ by
  $$H_{i',j'}(v):=
    \begin{cases}
      \frac{nv}{e^{\frac{nv}{2}}-e^{-\frac{nv}{2}}} & \text{if}\ \ i'=j',\\
      \frac{\omega_{mn}^{-i'}-\omega_{mn}^{-j'}}{\omega_{mn}^{-i'}e^{\frac{nv}{2}}-\omega_{mn}^{-j'}e^{-\frac{nv}{2}}} & \text{if}\ \ i'\ne j'.
    \end{cases}$$

\noindent
$\bullet$
 For $i',j'\in [mn]$ such that $i'\equiv j'$, we define $G_{i',j'}(v):=\log (H_{i',j'}(v)) \in v\CC[[v]].$

\noindent
$\bullet$
For $i',j'\in [mn]$ such that $i'\equiv j'$, we define
  $$\gamma_{i',j'}(v):=-B_{j'}(-\partial_v)\partial_vG_{i',j'}(v)\in \widehat{\Y}^{(mn)}_{\hbar_1,\hbar_2}[[v]].$$

\noindent
$\bullet$
 For $i'\in [mn]$, we define $g_{i'}(v):=\sum_{r\geq 0} g_{i',r} v^r\in \widehat{\Y}^{(mn)}_{\hbar_1,\hbar_2}[[v]]$ by
  $$g_{i'}(v):=\left(\frac{\hbar_2}{m(\q-\q^{-1})}\right)^{1/2}\cdot
    \exp \left(\frac{1}{2}\sum_{j'\equiv i'}^{j'\in [mn]} \gamma_{i',j'}(v)\right).$$

Now we are ready to state our main result:

\begin{thm}\label{main 1}
Fix $m,n\geq 1$ and $\omega_{mn}=\exp(2\pi \kk \mathbf{i}/mn)$ with $\gcd(\kk,n)=1$.
The assignment
\begin{equation}\tag{$\Phi 0$}\label{Phi0}
  h_{i,0} \mapsto \sum^{i'\in [mn]}_{i'\equiv i}\xi_{i',0},
\end{equation}
\begin{equation}\tag{$\Phi 1$}\label{Phi1}
  h_{i,l}\mapsto \frac{n}{\q-\q^{-1}}\sum_{i'\equiv i}^{i'\in [mn]} \omega_{mn}^{-li'}B_{i'}(ln),
\end{equation}
\begin{equation}\tag{$\Phi 2$}\label{Phi2}
  e_{i,k}\mapsto \sum_{i'\equiv i}^{i'\in [mn]} \omega_{mn}^{-ki'} e^{kn\sigma_{i'}^+}g_{i'}(\sigma_{i'}^+)x^+_{i',0},
\end{equation}
\begin{equation}\tag{$\Phi 3$}\label{Phi3}
  f_{i,k}\mapsto \sum_{i'\equiv i}^{i'\in [mn]} \omega_{mn}^{-ki'} e^{kn\sigma_{i'}^-}g_{i'}(\sigma_{i'}^-)x^-_{i',0},
\end{equation}
for $i\in [m], k\in \ZZ, l\in \ZZ\backslash\{0\}$, gives rise to a $\CC[[\hbar_1,\hbar_2]]$-algebra homomorphism
  $$\Phi^{\omega_{mn}}_{m,n}\colon \widehat{\U}^{(m),\omega_{mn}}_{\hbar_1,\hbar_2}\longrightarrow \widehat{\Y}^{(mn)}_{\hbar_1,\hbar_2}.$$
\end{thm}

We present two different proofs of this result in
Sections~\ref{section proof via reps},~\ref{section proof via shuffle},
see also Section~\ref{section partial proof} below.


\subsection{Classical limit of $\Phi^{\omega_{mn}}_{m,n}$}
$\ $

Recall the isomorphisms
  $$\theta^{(m)}\colon \U^{(m),\omega_{mn}}_{\hbar_1,\hbar_2}/(\hbar_2)\iso
    U(\bar{\dd}^{(m),0}_{\q_1^m})\ \mathrm{and}\
    \vartheta^{(mn)}\colon \Y^{(mn)}_{\hbar_1,\hbar_2}/(\hbar_2)\iso
    U(\bar{\D}^{(mn),0}_{\hbar_1})$$
of Theorems~\ref{limit_1} and~\ref{limit_2}, where
  $\q_1=\omega_{mn}\exp(\hbar_1/m)\Rightarrow \q_1^m=\omega_n \exp(\hbar_1)$
with $\omega_n:=\omega^m_{mn}$. Considering factors by $(\hbar_2)$, we get the
\emph{classical limit} of $\Phi^{\omega_{mn}}_{m,n}$ which will be viewed as
  $$\bar{\Phi}^{\omega_{mn}}_{m,n}\colon
    U\left(\widehat{\bar{\dd}}^{(m),0}_{\omega_n\exp(\hbar_1)}\right)
    \longrightarrow U\left(\widehat{\bar{\D}}^{(mn),0}_{\hbar_1}\right).$$
On the other hand, recall the homomorphism
  $\bar{\Upsilon}^{\omega_n}_{m,n}\colon\widehat{\bar{\dd}}^{(m)}_{\omega_n\exp(\hbar_1)}
   \to \widehat{\bar{\D}}^{(mn)}_{\hbar_1}$
from Theorem~\ref{Upsilon_0}.

\begin{prop}\label{limit_Phi}
The limit homomorphism $\bar{\Phi}^{\omega_{mn}}_{m,n}$ is induced
by $\bar{\Upsilon}^{\omega_n}_{m,n}$.
\end{prop}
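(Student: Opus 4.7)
The plan is to verify that $\bar{\Phi}^{\omega_{mn}}_{m,n}$ and $\bar{\Upsilon}^{\omega_n}_{m,n}\circ\theta^{(m)}$, both viewed as maps into $U(\widehat{\bar{\D}}^{(mn),0,'}_{\hbar_1})$ via the isomorphism $\vartheta^{(mn)}$ of Theorem~\ref{limit_2}, agree on the generators $h_{i,k}, e_{i,k}, f_{i,k}$ of $\U^{(m),\omega_{mn},'}_{\hbar_1}$. The first step is to extract the $\hbar_2\to 0$ asymptotics of the ingredients of $(\Phi 0)$--$(\Phi 3)$. Since $\q-\q^{-1}=\hbar_2/m+O(\hbar_2^3)$, while $\xi_{i'}(z)\equiv 1\pmod{\hbar_2}$ forces $B_{i'}(w)=(\hbar_2/mn)\sum_{r\geq 0}\xi_{i',r}w^r/r!+O(\hbar_2^2)$, the prefactor in $(\Phi 1)$ combines with $B_{i'}(ln)$ to give the finite limit $\sum_r\xi_{i',r}(ln)^r/r!$. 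Likewise $\gamma_{i',j'}(v)\in O(\hbar_2)$ and $\xi_{j'}(v)=1+O(\hbar_2)$, so $g_{i'}(v)\to 1$ (the scalar factor $(\hbar_2/(m(\q-\q^{-1})))^{1/2}$ also tends to $1$). This collapses $(\Phi 1)$--$(\Phi 3)$ to
\[
\bar{\Phi}^{\omega_{mn}}_{m,n}(h_{i,l})=\sum_{i'\equiv i}^{i'\in[mn]}\omega_{mn}^{-li'}\sum_{r\geq 0}\frac{(ln)^r}{r!}\xi_{i',r},\qquad \bar{\Phi}^{\omega_{mn}}_{m,n}(e_{i,k})=\sum_{i'\equiv i}^{i'\in[mn]}\omega_{mn}^{-ki'}\sum_{r\geq 0}\frac{(kn)^r}{r!}x^+_{i',r},
\]
and analogously for $f_{i,k}$, while $(\Phi 0)$ already gives the $l=0$ case.

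Next, I would apply $\vartheta^{(mn)}$ using the exponential generating function identity
\[
\sum_{r\geq 0}\frac{(ln)^r}{r!}(\partial+c\hbar_1)^r=e^{ln\partial}\cdot e^{lnc\hbar_1}
\]
with $c=1-i'/(mn)$ (for $i'\neq 0$). Reindexing $i'=jm+i$ with $j=0,\dots,n-1$ and invoking the pivotal identity $\omega_{mn}e^{\hbar_1/m}=\q_1$, so that $e^{-li'\hbar_1/m}=\q_1^{-li'}\omega_{mn}^{li'}$, the prefactor $\omega_{mn}^{-li'}$ cancels exactly against the $\omega_{mn}^{li'}$ produced by the exponential. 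The outcome is
\[
\vartheta^{(mn)}\bar{\Phi}^{\omega_{mn}}_{m,n}(h_{i,l})=\sum_{j=0}^{n-1}(E_{jm+i,jm+i}-E_{jm+i+1,jm+i+1})\otimes (\q_1^{m(n-j)-i}e^{n\partial})^l,
\]
together with analogous sums involving $E_{jm+i,jm+i+1}$ and $E_{jm+i+1,jm+i}$ for $e_{i,k}$ and $f_{i,k}$, respectively.

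A direct computation of $\Upsilon^{\omega_n}_{m,n}(\theta^{(m)}(\cdot))$ from the explicit matrix formulas in Theorem~\ref{Upsilon_0}(a) and Proposition~\ref{limit_m}, under the identification $E_{i,j}\otimes E_{k,l}\mapsto E_{m(k-1)+i,m(l-1)+j}$ with $q=\q_1^m$, produces exactly the same expressions for every $i\neq 0$. The main obstacle I anticipate is the affine root case $i=0$: the factor $Z$ in $\theta^{(m)}(e_{0,k})=E_{m,1}\otimes D^kZ$ maps under $\Upsilon^{\omega_n}_{1,n}$ to the cyclic shift with $x$ placed in the $(n,1)$-block, whereas on the other side $\vartheta^{(mn)}(x^+_{0,r})=E_{mn,1}\otimes\partial^r x$ produces a single $x$-factor. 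Matching these requires the Weyl-algebra relation $e^{n\partial}x=\q_1^{mn}xe^{n\partial}$ (equivalently $e^{n\hbar_1}=\q_1^{mn}$ since $\omega_{mn}^{mn}=1$) together with careful bookkeeping of the $\q_1$-powers distributed among the $n$ blocks of $\MM_{mn}$, so that the sole $x$-contribution on the Yangian side matches the lone $Z$-block produced by $\Upsilon^{\omega_n}_{m,n}$. A parallel analysis for $h_{0,0}$ matches the central generator $\bar{\Upsilon}^{\omega_n}_{m,n}(c_\dd)=c_\D$ against the telescoping sum of $\vartheta^{(mn)}(\xi_{jm,0})$'s arising from $(\Phi 0)$. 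Once these case-by-case identifications are completed on all generators, the proposition follows.
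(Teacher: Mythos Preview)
Your proposal is correct and follows essentially the same route as the paper's proof: both compute the $\hbar_2\to 0$ limits of the ingredients of $(\Phi 0)$--$(\Phi 3)$ to obtain $g_{i'}(v)\equiv 1$ and $\frac{n}{\q-\q^{-1}}B_{i'}(ln)\equiv \sum_r \frac{(ln)^r}{r!}\xi_{i',r}$, then invoke the exponential identity (the paper writes it as $\sum_r \frac{(kn)^r}{r!}(\partial+\frac{s}{mn}\hbar_1)^r=(\omega_{mn}^{-1}\q_1)^{ks}e^{kn\partial}$) and compare with the explicit formulas for $\theta^{(m)},\vartheta^{(mn)},\Upsilon^{\omega_n}_{m,n}$. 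Your treatment is more explicit than the paper's about the affine root $i=0$ and the central term, which the paper leaves to the reader.
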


\begin{proof}[Proof of Proposition~\ref{limit_Phi}]
$\ $

Note that
  $\frac{\hbar_2}{m(\q-\q^{-1})}\equiv 1\ (\mathrm{mod}\ \hbar_2),
   \frac{mn}{\hbar_2}B_{i'}(v)\equiv \sum_{r=0}^\infty \frac{v^r}{r!}\xi_{i',r}\ (\mathrm{mod}\ \hbar_2)
   \Rightarrow g_{i'}(v)\equiv 1\ (\mathrm{mod}\ \hbar_2)$.
Combining this with the identity
  $\sum_{r=0}^\infty \frac{(kn)^r}{r!}(\partial+\frac{s}{mn}\hbar_1)^r=(\omega_{mn}^{-1}\q_1)^{ks}e^{kn\partial}$,
we get:
  $$\Phi^{\omega_{mn}}_{m,n}(h_{i,k})\equiv \sum_{i'\equiv i}\omega_{mn}^{-ki'} \sum_{r=0}^\infty \frac{(kn)^r}{r!}\xi_{i',r}\equiv
    \sum_{a=1}^n\sum_{r=0}^\infty \omega_{mn}^{-k(m(a-1)+i)} \frac{(kn)^r}{r!}\xi_{m(a-1)+i,r}\ (\mathrm{mod}\ \hbar_2),$$
  $$\Phi^{\omega_{mn}}_{m,n}(e_{i,k})\equiv \sum_{i'\equiv i}\omega_{mn}^{-ki'}\sum_{r=0}^\infty \frac{(kn)^r}{r!}x^+_{i',r}\equiv
    \sum_{a=1}^n\sum_{r=0}^\infty \omega_{mn}^{-k(m(a-1)+i)} \frac{(kn)^r}{r!}x^+_{m(a-1)+i,r}\ (\mathrm{mod}\ \hbar_2),$$
  $$\Phi^{\omega_{mn}}_{m,n}(f_{i,k})\equiv \sum_{i'\equiv i}\omega_{mn}^{-ki'}\sum_{r=0}^\infty \frac{(kn)^r}{r!}x^-_{i',r}\equiv
    \sum_{a=1}^n\sum_{r=0}^\infty \omega_{mn}^{-k(m(a-1)+i)} \frac{(kn)^r}{r!}x^-_{m(a-1)+i,r}\ (\mathrm{mod}\ \hbar_2).$$

Recalling the formulas of Propositions~\ref{limit_m} and~\ref{limit_a} for
the images of $\{e_{i,k}, f_{i,k}, h_{i,k}\}_{i\in [m]}^{k\in \ZZ}$ and
$\{x^\pm_{i',r},\xi_{i',r}\}_{i'\in [mn]}^{r\in \NN}$ under $\theta^{(m)}$ and
$\vartheta^{(mn)}$, respectively, we get the result.
\end{proof}

Combining this result with Theorems~\ref{Upsilon_0}(d),~\ref{flatness},
and the condition $\gcd(\kk,n)=1$, we get:

\begin{cor}\label{injective_Phi}
The homomorphism $\Phi^{\omega_{mn}}_{m,n}$ is injective.
\end{cor}


\subsection{Partial compatibility of $\Phi^{\omega_{mn}}_{m,n}$}\label{section partial proof}
$\ $

Note that Theorem~\ref{main 1} is equivalent to the assignment
$\Phi^{\omega_{mn}}_{m,n}$ given by~(\ref{Phi0}--\ref{Phi3}) to be compatible
with the defining relations (T0--T3,H,T6). In this section, we provide a
straightforward verification of the compatibility with (T0--T3,H) in spirit of~\cite{GTL}.
However, we are not aware of any direct verification of the Serre relations (T6)
(the arguments in~\cite{GTL} heavily rely on the existence of subalgebras
$U_q(L\ssl_2)\subset U_q(L\g)$ for which there are no Serre relations).

\medskip
\noindent
$\bullet$
\emph{Compatibility of $\Phi^{\omega_{mn}}_{m,n}$ and (T0)}.
$\ $

The equality $[\Phi^{\omega_{mn}}_{m,n}(h_{i,k}), \Phi^{\omega_{mn}}_{m,n}(h_{j,l})]=0$
follows from~(\ref{Phi0}--\ref{Phi1}) and the relation~(\ref{Y0}).

\medskip
\noindent
$\bullet$ \emph{Compatibility of $\Phi^{\omega_{mn}}_{m,n}$ and (H)}.
$\ $

If $k=0$, then applying formulas~(\ref{Phi0},\ref{Phi2},\ref{Phi3}) and
the relation~(\ref{Y0}), we get
\begin{equation*}
  [\Phi^{\omega_{mn}}_{m,n}(h_{i,0}), \Phi^{\omega_{mn}}_{m,n}(e_{j,l})]=
  \sum_{j'\equiv j}^{j'\in [mn]} \sum_{i'\equiv i}^{i'\in [mn]}
  a^{(mn)}_{i',j'} \omega_{mn}^{-lj'}e^{ln\sigma^+_{j'}}g_{j'}(\sigma^+_{j'})x^+_{j',0},
\end{equation*}
\begin{equation*}
  [\Phi^{\omega_{mn}}_{m,n}(h_{i,0}), \Phi^{\omega_{mn}}_{m,n}(f_{j,l})]=
  \sum_{j'\equiv j}^{j'\in [mn]} \sum_{i'\equiv i}^{i'\in [mn]}
  (-a^{(mn)}_{i',j'}) \omega_{mn}^{-lj'}e^{ln\sigma^-_{j'}}g_{j'}(\sigma^-_{j'})x^-_{j',0}.
\end{equation*}
It remains to note that $\sum_{i'\equiv i}^{i'\in [mn]} a^{(mn)}_{i',j'}=a^{(m)}_{i,j}$
for any $i,j\in [m],j'\in [mn]$ such that $j'\equiv j$, where the superscripts $(m),(mn)$
are used to distinguish between the two matrices $A$ involved.

To treat the $k\ne 0$ case, we note first that the identity
$B\left(\log\left(1-\frac{\nu}{z}\right)\right)=\frac{1-e^{\nu w}}{w}$ implies:

\begin{lem}\label{B-commutator}
The equalities of Proposition~\ref{yangian_generating}(c) applied to
$\Y^{(mn)}_{\hbar_1,\hbar_2}$ are equivalent to
  $$[B_{i'}(v), x^\pm_{j',s}]=\pm \frac{c_{mn}(i',j';v)}{v}\cdot e^{\sigma_{j'}^\pm v} x^\pm_{j',s}
    \ \mathrm{for\ any}\ i',j'\in[mn],\ s\in\NN,$$
where
  $c_{mn}(i',j';v):=\delta_{j',i'+1}(e^{\frac{\hbar_1v}{mn}}-e^{-\frac{\hbar_3v}{mn}})+
   \delta_{j',i'}(e^{\frac{\hbar_2v}{mn}}-e^{-\frac{\hbar_2v}{mn}})+
   \delta_{j',i'-1}(e^{\frac{\hbar_3v}{mn}}-e^{-\frac{\hbar_1v}{mn}}).$
\end{lem}

Therefore, applying formulas~(\ref{Phi0},\ref{Phi2}) and Lemma~\ref{B-commutator}, we get
\begin{equation*}
  [\Phi^{\omega_{mn}}_{m,n}(h_{i,k}), \Phi^{\omega_{mn}}_{m,n}(e_{j,l})]=
  \sum_{j'\equiv j}^{j'\in [mn]} \sum_{i'\equiv i}^{i'\in [mn]} \frac{\omega_{mn}^{k(j'-i')}c_{mn}(i',j';kn)}{k(\q-\q^{-1})} \omega_{mn}^{-(k+l)j'}e^{(k+l)n\sigma^+_{j'}}g_{j'}(\sigma^+_{j'})x^+_{j',0},
\end{equation*}
\begin{equation*}
  [\Phi^{\omega_{mn}}_{m,n}(h_{i,k}), \Phi^{\omega_{mn}}_{m,n}(f_{j,l})]=
  \sum_{j'\equiv j}^{j'\in [mn]} \sum_{i'\equiv i}^{i'\in [mn]} \frac{-\omega_{mn}^{k(j'-i')}c_{mn}(i',j';kn)}{k(\q-\q^{-1})} \omega_{mn}^{-(k+l)j'}e^{(k+l)n\sigma^-_{j'}}g_{j'}(\sigma^-_{j'})x^-_{j',0}.
\end{equation*}

To complete the verification of compatibility with~(\ref{H}), it remains to prove:

\begin{lem}\label{b vs c}
If $i,j\in [m], j'\in [mn]$ and  $j'\equiv j$, then
 $\sum_{i'\equiv i}^{i'\in [mn]} \frac{\omega_{mn}^{k(j'-i')}c_{mn}(i',j';kn)}{k(\q-\q^{-1})}=b_m(i,j;k)$.
\end{lem}

\begin{proof}[Proof of Lemma~\ref{b vs c}]
$\ $

Follows directly from the identity
  $b_m(i,j;k)=\frac{\delta_{j,i+1}(\q_1^k-\q_3^{-k})+\delta_{j,i}(\q_2^k-\q_2^{-k})+
   \delta_{j,i-1}(\q_3^k-\q_1^{-k})}{k(\q-\q^{-1})}$.
\end{proof}

\noindent
$\bullet$ \emph{Compatibility of $\Phi^{\omega_{mn}}_{m,n}$ and (T2)}.
$\ $

First, let us note that the first equality of Proposition~\ref{yangian_generating}(d)
for $\Y^{(mn)}_{\hbar_1,\hbar_2}$ is equivalent to
\begin{equation}\label{quadratic1}
  A(\sigma^+_{i'},\sigma^+_{j'})
  (p^{(mn)}_{i',j'}(\sigma^+_{i'},\sigma^+_{j'})x^+_{i',0}x^+_{j',0}+
  p^{(mn)}_{j',i'}(\sigma^+_{j'},\sigma^+_{i'})x^+_{j',0}x^+_{i',0})=0,
\end{equation}
\begin{equation}\label{quadratic2}
  \mu\left(B(\sigma^{+,(1)}_{i'},\sigma^{+,(2)}_{i'})p^{(mn)}_{i',i'}
  (\sigma^{+,(1)}_{i'}, \sigma^{+,(2)}_{i'})x^+_{i',0}\otimes x^+_{i',0}\right)=0
\end{equation}
for any $i',j'\in [mn]$ and $A(x,y),B(x,y)\in \CC[[x,y]]$,
such that $i'\ne j',\ B(x,y)=B(y,x)$.

To rewrite $\Phi^{\omega_{mn}}_{m,n}(e_i(z))\Phi^{\omega_{mn}}_{m,n}(e_j(w))$
and $\Phi^{\omega_{mn}}_{m,n}(e_j(w))\Phi^{\omega_{mn}}_{m,n}(e_i(z))$ in the
form with all Cartan terms taken to the left, we will need the following counterpart
of~\cite[Proposition 2.10]{GTL}:

\begin{lem}\label{lambda action}
(a) There are linear operators $\{\lambda^\pm_{i',s}\}_{i'\in [mn]}^{s\in \NN}$
on $\Y^{(mn),0}_{\hbar_1,\hbar_2}$ (cf. Section~\ref{section Yangian addit}) such that
for any $r\in \NN$ and $\xi\in \Y^{(mn),0}_{\hbar_1,\hbar_2}$, we have
$x^\pm_{i',r}\xi=\sum_{s\geq 0} \lambda^\pm_{i',s}(\xi)x^\pm_{i',r+s}$.

\noindent
(b) Let $\lambda^\pm_{i'}(v)\colon \Y^{(mn),0}_{\hbar_1,\hbar_2}\to \Y^{(mn),0}_{\hbar_1,\hbar_2}[v]$
be given by $\lambda^\pm_{i'}(v)(\xi)=\sum_{s\geq 0} \lambda^\pm_{i',s}(\xi)v^s$. Then,
$\lambda^\pm_{i'}(v)$ is an algebra homomorphism.

\noindent
(c) We have $\lambda^\pm_{i'}(u)(B_{j'}(v))=B_{j'}(v)\mp \frac{c_{mn}(j',i';v)}{v}e^{uv}$.
\end{lem}

Using these operators, we obtain:
\begin{multline}\label{computation 1}
  \Phi^{\omega_{mn}}_{m,n}(e_i(z))\Phi^{\omega_{mn}}_{m,n}(e_j(w))=\\
  \sum_{i'\equiv i,j'\equiv j}^{i',j'\in [mn], i'\ne j'}
  \delta\left(\frac{\omega_{mn}^{-i'}e^{n\sigma^+_{i'}}}{z}\right)
  \delta\left(\frac{\omega_{mn}^{-j'}e^{n\sigma^+_{j'}}}{w}\right)
  g_{i'}(\sigma^+_{i'})\lambda^+_{i'}(\sigma^+_{i'})(g_{j'}(\sigma^+_{j'}))x^+_{i',0}x^+_{j',0}+\\
  \sum_{i\equiv i'\equiv j}^{i'\in [mn]}
  \mu\left(\delta\left(\frac{\omega_{mn}^{-i'}e^{n\sigma^{+,(1)}_{i'}}}{z}\right)
  \delta\left(\frac{\omega_{mn}^{-i'}e^{n\sigma^{+,(2)}_{i'}}}{w}\right)
  g_{i'}(\sigma^{+,(1)}_{i'})\lambda^+_{i'}(\sigma^{+,(1)}_{i'})(g_{i'}(\sigma^{+,(2)}_{i'}))x^+_{i',0}\otimes x^+_{i',0}\right),
\end{multline}
\begin{multline}\label{computation 2}
  \Phi^{\omega_{mn}}_{m,n}(e_j(w))\Phi^{\omega_{mn}}_{m,n}(e_i(z))=\\
  \sum_{i'\equiv i,j'\equiv j}^{i',j'\in [mn], i'\ne j'}
  \delta\left(\frac{\omega_{mn}^{-j'}e^{n\sigma^+_{j'}}}{w}\right)
  \delta\left(\frac{\omega_{mn}^{-i'}e^{n\sigma^+_{i'}}}{z}\right)
  g_{j'}(\sigma^+_{j'})\lambda^+_{j'}(\sigma^+_{j'})(g_{i'}(\sigma^+_{i'}))x^+_{j',0}x^+_{i',0}+\\
  \sum_{i\equiv i'\equiv j}^{i'\in [mn]}
  \mu\left(\delta\left(\frac{\omega_{mn}^{-i'}e^{n\sigma^{+,(1)}_{i'}}}{w}\right)
  \delta\left(\frac{\omega_{mn}^{-i'}e^{n\sigma^{+,(2)}_{i'}}}{z}\right)
  g_{i'}(\sigma^{+,(1)}_{i'})\lambda^+_{i'}(\sigma^{+,(1)}_{i'})(g_{i'}(\sigma^{+,(2)}_{i'}))x^+_{i',0}\otimes x^+_{i',0}\right).
\end{multline}

Combining~(\ref{quadratic1},\ref{quadratic2}) with~(\ref{computation 1},\ref{computation 2}),
the compatibility of $\Phi^{\omega_{mn}}_{m,n}$ with (T2) follows from the next result:

\begin{prop}\label{compatibility with T2}
For any $i,j\in [m]$ and $i',j'\in [mn]$ such that $i'\equiv i,j'\equiv j$, we have
\begin{equation*}
  \frac{d^{(m)}_{i,j}g^{(m)}_{i,j}(\omega_{mn}^{-i'}e^{nu},\omega_{mn}^{-j'}e^{nv})}
  {p^{(mn)}_{i',j'}(u,v)} g_{i'}(u)\lambda^+_{i'}(u)(g_{j'}(v))=
  \frac{g^{(m)}_{j,i}(\omega_{mn}^{-j'}e^{nv},\omega_{mn}^{-i'}e^{nu})}
  {p^{(mn)}_{j',i'}(v,u)} g_{j'}(v)\lambda^+_{j'}(v)(g_{i'}(u)).
\end{equation*}
\end{prop}

\begin{proof}[Proof of Proposition~\ref{compatibility with T2}]
$\ $

Due to Lemma~\ref{lambda action}(c), for $a\in [mn]$ such that $a\equiv j'$, we have
\begin{multline}\label{lambda action on g}
  \lambda^\pm_{i'}(u)(\exp(\gamma_{j',a}(v)/2))=F^\pm_{i',j',a}(u,v)^{1/2}\cdot\exp(\gamma_{j',a}(v)/2),\
  \mathrm{where}\  F^\pm_{i',j',a}(u,v):=\\
  \left(\frac{H_{j',a}(v-u+\frac{\hbar_1}{mn})}{H_{j',a}(v-u-\frac{\hbar_3}{mn})}\right)^{\pm\delta_{a,i'+1}}
  \left(\frac{H_{j',a}(v-u+\frac{\hbar_2}{mn})}{H_{j',a}(v-u-\frac{\hbar_2}{mn})}\right)^{\pm\delta_{a,i'}}
  \left(\frac{H_{j',a}(v-u+\frac{\hbar_3}{mn})}{H_{j',a}(v-u-\frac{\hbar_1}{mn})}\right)^{\pm\delta_{a,i'-1}}.
\end{multline}
Recalling the formulas for $g_{i'}(u)$ and $g_{j'}(v)$, we immediately obtain
\begin{equation}\label{lambda action on gg}
  \lambda^+_{i'}(u)(g_{j'}(v))=g_{j'}(v)\prod_{a\equiv j'}^{a\in [mn]} F^+_{i',j',a}(u,v)^{1/2},\
  \lambda^+_{j'}(v)(g_{i'}(u))=g_{i'}(u)\prod_{b\equiv i'}^{b\in [mn]} F^+_{j',i',b}(v,u)^{1/2}.
\end{equation}
On the other hand, we have the equalities
\begin{equation}\label{ratio of p}
  \frac{p^{(mn)}_{i',j'}(u,v)}{p^{(mn)}_{j',i'}(v,u)}=-
  \left(\frac{u-v-\frac{\hbar_1}{mn}}{u-v+\frac{\hbar_3}{mn}}\right)^{\delta_{j',i'+1}}
  \left(\frac{u-v-\frac{\hbar_2}{mn}}{u-v+\frac{\hbar_2}{mn}}\right)^{\delta_{j',i'}}
  \left(\frac{u-v-\frac{\hbar_3}{mn}}{u-v+\frac{\hbar_1}{mn}}\right)^{\delta_{j',i'-1}},
\end{equation}
\begin{equation}\label{ratio of g}
  \frac{d^{(m)}_{i,j}g^{(m)}_{i,j}(z,w)}{g^{(m)}_{j,i}(w,z)}=-\q^{-a^{(m)}_{i,j}}
  \left(\frac{z-\q_1 w}{z-\q_3^{-1}w}\right)^{\delta_{j,i+1}}
  \left(\frac{z-\q_2 w}{z-\q_2^{-1}w}\right)^{\delta_{j,i}}
  \left(\frac{z-\q_3 w}{z-\q_1^{-1}w}\right)^{\delta_{j,i-1}}.
\end{equation}
It remains to combine the above formulas~(\ref{lambda action on g}--\ref{ratio of g}) together.
\end{proof}

\noindent
$\bullet$ \emph{Compatibility of $\Phi^{\omega_{mn}}_{m,n}$ and (T3)}.
$\ $

Analogously to the previous verification, this compatibility follows from the following result:

\begin{prop}\label{compatibility with T3}
For any $i,j\in [m]$ and $i',j'\in [mn]$ such that $i'\equiv i,j'\equiv j$, we have
\begin{equation*}
  \frac{d^{(m)}_{j,i}g^{(m)}_{j,i}(\omega_{mn}^{-j'}e^{nv},\omega_{mn}^{-i'}e^{nu})}
  {p^{(mn)}_{j',i'}(v,u)} g_{i'}(u)\lambda^-_{i'}(u)(g_{j'}(v))=
  \frac{g^{(m)}_{i,j}(\omega_{mn}^{-i'}e^{nu},\omega_{mn}^{-j'}e^{nv})}
  {p^{(mn)}_{i',j'}(u,v)} g_{j'}(v)\lambda^-_{j'}(v)(g_{i'}(u)).
\end{equation*}
\end{prop}

The proof is analogous to that of Proposition~\ref{compatibility with T2}
and is based on the above formulas~(\ref{lambda action on g}--\ref{lambda action on gg}).

\medskip
\noindent
$\bullet$ \emph{Compatibility of $\Phi^{\omega_{mn}}_{m,n}$ and (T1)}.
$\ $

Define $g^{(k)}_{i'}(v):=\sum_{r\geq 0} g^{(k)}_{i',r} v^r\in \widehat{\Y}^{(mn)}_{\hbar_1,\hbar_2}[[v]]$ via
$g^{(k)}_{i'}(v):=e^{knv}g_{i'}(v)$. Then
\begin{equation*}
  \Phi^{\omega_{mn}}_{m,n}(e_{i,k})\Phi^{\omega_{mn}}_{m,n}(f_{j,l})=
  \sum_{i'\equiv i, j'\equiv j}^{i',j'\in [mn]} \omega_{mn}^{-ki'-lj'}
  \sum_{r_1,r_2,s\geq 0}g^{(k)}_{i',r_1}\lambda^+_{i',s}(g^{(l)}_{j',r_2})x^+_{i',r_1+s}x^-_{j',r_2},
\end{equation*}
\begin{equation*}
  \Phi^{\omega_{mn}}_{m,n}(f_{j,l})\Phi^{\omega_{mn}}_{m,n}(e_{i,k})=
  \sum_{i'\equiv i, j'\equiv j}^{i',j'\in [mn]} \omega_{mn}^{-ki'-lj'}
  \sum_{r_1,r_2,s\geq 0}g^{(l)}_{j',r_1}\lambda^-_{j',s}(g^{(k)}_{i',r_2}) x^-_{j',r_1+s}x^+_{i',r_2}.
\end{equation*}
Combining this with
  $x^+_{i',r_1+s}x^-_{j',r_2}=x^-_{j',r_2}x^+_{i',r_1+s}+\delta_{i',j'}\xi_{i',r_1+r_2+s}$,
we see that compatibility of $\Phi^{\omega_{mn}}_{m,n}$ with (T1) follows from the following
result (compare to~\cite[Lemma 3.5]{GTL}):

\begin{prop}\label{compatibility with T1}
(a) For any $i',j'\in [mn]$, we have
\begin{equation*}
  g_{i'}(u)\lambda^+_{i'}(u)(g_{j'}(v))=g_{j'}(v)\lambda^-_{j'}(v)(g_{i'}(u)).
\end{equation*}

\noindent
(b) For any $i\in [m], N\in \ZZ$, we have
\begin{equation*}
  \sum_{i'\equiv i}^{i'\in [mn]} \omega_{mn}^{-Ni'} \left\{e^{Nnu}g_{i'}(u)\lambda^+_{i'}(u)(g_{i'}(u))\right\}_{\mid{u^r\mapsto \xi_{i',r}}}=
  \Phi^{\omega_{mn}}_{m,n}\left(\frac{\psi^+_{i,N}-\psi^-_{i,N}}{\q-\q^{-1}}\right).
\end{equation*}
\end{prop}

\begin{proof}[Proof of Proposition~\ref{compatibility with T1}]
$\ $

Part (a) is due to the formulas~(\ref{lambda action on g}--\ref{lambda action on gg})
and the equality
  $\prod_{a\equiv j'} F^+_{i',j',a}(u,v)=\prod_{b\equiv i'} F^-_{j',i',b}(v,u)$.

Consider a homomorphism
  $\Phi^{\omega_{mn},0}_{m,n}\colon \U^{(m),\omega_{mn},0}_{\hbar_1,\hbar_2}
   \to \widehat{\Y}^{(mn),0}_{\hbar_1,\hbar_2}$
defined by~(\ref{Phi0},\ref{Phi1}). Our proof of part (b) is based on the following
result (compare to~\cite[Proposition 4.2]{GTL}).

\begin{prop}\label{cartan homom}
For any $i\in [m], N\in \ZZ$, we have
  $$\Phi^{\omega_{mn},0}_{m,n}\left(\frac{\psi^+_{i,N}-\psi^-_{i,N}}{\q-\q^{-1}}\right)=
    \sum_{i'\equiv i} Q^{(N)}_{i'}(u)_{\mid{u^r\mapsto \xi_{i',r}}},\
    Q^{(N)}_{i'}(u):=\frac{\hbar_2\omega_{mn}^{-Ni'}e^{Nnu}}{m(\q-\q^{-1})}\prod_{j'\equiv i'}\exp(\gamma_{i',j'}(u)).$$
\end{prop}

Combining Proposition~\ref{cartan homom} with
$Q^{(N)}_{i'}(u)=\omega_{mn}^{-Ni'}e^{Nnu}\cdot g_{i'}(u)^2$ and the equality
  $$\lambda^+_{i'}(u)(g_{i'}(u))=g_{i'}(u)\prod_{a\equiv i'}^{a\in [mn]} F^+_{i',i',a}(u,u)=g_{i'}(u)$$
completes our proof of Proposition~\ref{compatibility with T1}(b).
\end{proof}

For completeness of our exposition, we conclude this section with
a proof of Proposition~\ref{cartan homom}.

\begin{proof}[Proof of Proposition~\ref{cartan homom}]
$\ $

Fix $\bar{s}=(s_0,\ldots,s_{mn-1})\in \NN^{mn}$ and set
$S_{\bar{s}}:=\prod_{i'=0}^{mn-1} S_{s_{i'}}$. Consider the rings
  $$R(\bar{s}):=(\CC[[\hbar_1,\hbar_2]][\{a^{(i')}_k\}]_{i'\in [mn]}^{1\leq k\leq s_{i'}})^{S_{\bar{s}}}\ \mathrm{and}\
    S(\bar{s}):=(\CC[[\hbar_1,\hbar_2]][\{(A^{(i')}_k)^{\pm 1}\}]_{i'\in [mn]}^{1\leq k\leq s_{i'}})^{S_{\bar{s}}}.$$
Define homomorphisms
  $\CCD^Y\colon \Y^{(mn),0}_{\hbar_1,\hbar_2}\to R(\bar{s})$
and
  $\CCD^U\colon \U^{(m),\omega_{mn},0}_{\hbar_1,\hbar_2}\to S(\bar{s})$
via
  $$\CCD^Y(\xi_{i'}(u))=\prod_{k=1}^{s_{i'}} \left(\frac{u+\frac{\hbar_2}{mn}-a^{(i')}_k}{u-a^{(i')}_k}\right)^+,\
   \CCD^U(\psi^\pm_i(z))=\prod_{i'\equiv i}^{i'\in [mn]} \prod_{k=1}^{s_{i'}}
   \left(\frac{\q z-\q^{-1}A^{(i')}_k}{z-A^{(i')}_k}\right)^\pm.$$

The following is straightforward (cf. the proof of~\cite[Proposition 4.4]{GTL}):

\begin{lem}\label{explicit D-homom}
(a) For any $i'\in [mn], r\in \NN$, we have
\begin{equation*}
  \CCD^Y(\xi_{i',r})=\sum_{k=1}^{s_{i'}}(a^{(i')}_k)^r \prod_{1\leq k'\leq s_{i'}}^{k'\ne k}
  \frac{a^{(i')}_k-a^{(i')}_{k'}+\frac{\hbar_2}{mn}}{a^{(i')}_k-a^{(i')}_{k'}},\
  \CCD^Y(B_{i'}(v))=\frac{1-e^{-\frac{\hbar_2}{mn}v}}{v}\sum_{k=1}^{s_{i'}} e^{a^{(i')}_kv}.
\end{equation*}

\noindent
(b) For any $i\in [m], r\in \ZZ_{>0}$, we have
\begin{equation*}
\begin{split}
  & \CCD^U(\psi_{i,\pm r})=\pm (\q-\q^{-1})\sum_{i'\equiv i} \sum_{k=1}^{s_{i'}}(A^{(i')}_k)^{\pm r}
  \prod_{j'\equiv i,1\leq k'\leq s_{j'}}^{(j',k')\ne (i',k)}\frac{\q A^{(i')}_{k}-\q^{-1} A^{(j')}_{k'}}{A^{(i')}_k-A^{(j')}_{k'}},\\
  & \CCD^U(h_{i,0})=\sum_{i'\equiv i} s_{i'},\
  \CCD^U(h_{i,\pm r})=\frac{1-\q^{\mp 2r}}{\pm r(\q-\q^{-1})}\sum_{i'\equiv i}\sum_{k=1}^{s_{i'}} (A^{(i')}_k)^{\pm r}.
\end{split}
\end{equation*}
\end{lem}

Let $\widehat{R}(\bar{s})$ be the completion of $R(\bar{s})$ with respect
to the $\NN$-grading defined by $\deg(\hbar_s)=\deg(a^{(i')}_k)=1$. As $\CCD^Y$
preserves the grading, it extends to a homomorphism
  $\widehat{\Y}^{(mn),0}_{\hbar_1,\hbar_2}\to \widehat{R}(\bar{s})$
also denoted by $\CCD^Y$. Consider the homomorphism
  $\ch\colon S(\bar{s})\to \widehat{R}(\bar{s})$
defined by $A^{(i')}_k\mapsto \omega^{-i'}_{mn}e^{na^{(i')}_k}$.

Our proof of Proposition~\ref{cartan homom} is crucially based on the following result:

\begin{lem}\label{compatibility cartan}
(a) We have $\ch\circ \CCD^U=\CCD^Y\circ \Phi^{\omega_{mn},0}_{m,n}$.

\noindent
(b) For $N\in \ZZ$, we have
  $\ch\circ \CCD^U \left(\frac{\psi^+_{i,N}-\psi^-_{i,N}}{\q-\q^{-1}}\right)=
   \CCD^Y(\sum_{i'\equiv i} Q^{(N)}_{i'}(u)_{\mid{u^r\mapsto \xi_{i',r}}}).$
\end{lem}

\begin{proof}[Proof of Lemma~\ref{compatibility cartan}]
$\ $

Part (a) follows by comparing the images of $h_{i,k}$ via Lemma~\ref{explicit D-homom}.

Let us now verify part (b). Using Lemma~\ref{explicit D-homom}(b),
the left-hand side can be written as
\begin{equation*}
  \ch\circ \CCD^U \left(\frac{\psi^+_{i,N}-\psi^-_{i,N}}{\q-\q^{-1}}\right)=
  \sum_{i'\equiv i} \sum_{k=1}^{s_{i'}} \omega_{mn}^{-Ni'}e^{Nna^{(i')}_k}
  \prod_{j'\equiv i, 1\leq k'\leq s_{j'}}^{(j',k')\ne (i',k)}
  \frac{\q\omega_{mn}^{-i'}e^{na^{(i')}_k}-\q^{-1}\omega_{mn}^{-j'}e^{na^{(j')}_{k'}}}
  {\omega_{mn}^{-i'}e^{na^{(i')}_k}-\omega_{mn}^{-j'}e^{na^{(j')}_{k'}}}.
\end{equation*}
On the other hand, due to Lemma~\ref{explicit D-homom}(a), we also have
\begin{equation*}
   \CCD^Y\left(\sum_{i'\equiv i} Q^{(N)}_{i'}(u)_{\mid{u^r\mapsto \xi_{i',r}}}\right)=
   \sum_{i'\equiv i}\sum_{k=1}^{s_{i'}} \CCD^Y(Q^{(N)}_{i'}(u))_{\mid{u\mapsto a^{(i')}_k}}
   \prod_{1\leq k'\leq s_{i'}}^{k'\ne k}\frac{a^{(i')}_k-a^{(i')}_{k'}+\frac{\hbar_2}{mn}}{a^{(i')}_k-a^{(i')}_{k'}}.
\end{equation*}

To evaluate $\CCD^Y(Q^{(N)}_{i'}(u))$, we note that the second equality
of Lemma~\ref{explicit D-homom}(a) implies
  $\CCD^Y(\gamma_{i',j'}(u))=\sum_{k'=1}^{s_{j'}}
   \left(G_{i',j'}(u-a^{(j')}_{k'})-G_{i',j'}(u-a^{(j')}_{k'}+\frac{\hbar_2}{mn})\right)$.
The result follows.
\end{proof}

Lemma~\ref{compatibility cartan} and an injectivity of
  $\oplus \CCD^Y\colon \widehat{\Y}^{(mn),0}_{\hbar_1,\hbar_2}\to
   \oplus_{\bar{s}\in \NN^{mn}}\widehat{R}(\bar{s})$
imply Proposition~\ref{cartan homom}.
\end{proof}


\section{Compatible isomorphisms of representations}\label{section proof via reps}

In this section, we construct isomorphisms of representations compatible with
$\Phi^{\omega_{mn}}_{m,n}$. Combining this with Corollary~\ref{faithfulness} yields
a short proof of Theorem~\ref{main 1}.


\subsection{Isomorphisms $I_{m,n;\omega_{mn}}^{r;\pp,\vv}$}\label{I intertwiner}
$\ $

Given $m,n\geq 1$ and $\omega_{mn}=\exp(2\pi \kk \mathbf{i}/mn)$ with $\gcd(\kk,n)=1$,
we consider the two algebras of interest:
  $\U^{(m),\omega_{mn}}_{\hbar_1,\hbar_2}$ and $\Y^{(mn)}_{\hbar_1,\hbar_2}$.
To proceed further, choose $r\geq 1$ and the following $r$-tuples:
  $$\pp=(p_1,\ldots,p_r) \in [mn]^r,\ \vv=(v_1,\ldots,v_r)\in ((\hbar_1,\hbar_2)\CC[[\hbar_1,\hbar_2]])^r,$$
  $$\pp'=(p'_1,\ldots,p'_r) \in [m]^r,\ \uu'=(u'_1,\ldots,u'_r)\in (\CC[[\hbar_1,\hbar_2]]^\times)^r.$$
Associated to this data, we have a collection of Fock
$\U^{(m),\omega_{mn}}_{R}$-representations $\{F^{p'_k}_R(u'_k)\}_{k=1}^r$ and
Fock $\Y^{(mn)}_{R}$-representations $\{{^{a}F}^{p_k}_R(v_k)\}_{k=1}^r$.
Following Section~\ref{section tensor Fock}, we consider
  $$F^{\pp'}_R(\uu'):=F^{p'_1}_R(u'_1)\otimes F^{p'_2}_R(u'_2)\otimes \cdots \otimes F^{p'_r}_R(u'_r) -
    \mathrm{a\ representation\ of}\ \U^{(m),\omega_{mn}}_{R},$$
  $${^{a}F}^{\pp}_R(\vv):={^{a}F}^{p_1}_R(v_1)\otimes {^{a}F}^{p_2}_R(v_2)\otimes \cdots \otimes {^{a}F}^{p_r}_R(v_r) -
    \mathrm{a\ representation\ of}\ \Y^{(mn)}_{R},$$
whenever these representations are well-defined, i.e.,
$\{u'_k\}_{k=1}^r$ and $\{v_k\}_{k=1}^r$ are \emph{not in resonance}.
Both of these tensor products have natural bases $\{|\boldsymbol{\lambda}\rangle\}$
labeled by $r$-tuples of partitions
   $$\boldsymbol{\lambda}=(\lambda^{(1)},\ldots,\lambda^{(r)})\
     \mathrm{with}\ \lambda^{(k)} - \mathrm{a\ partition}\ (1\leq k\leq r).$$
The action of the generators $\{h_{i,k}, e_{i,k}, f_{i,k}\}_{i\in [m]}^{k\in \ZZ}$
and $\{\xi_{i',r}, x^\pm_{i',r}\}_{i'\in [mn]}^{r\in \NN}$ in these bases is given
by the explicit formulas of Propositions~\ref{Fock_mr} and~\ref{Fock_ar} whereas
$\{h_s\}$ and $\{q_s\}$ are replaced by
  $$h_s\rightsquigarrow \frac{\hbar_s}{mn},\
    q_1\rightsquigarrow \q_1:=\omega_{mn}e^{\frac{\hbar_1}{m}},\
    q\rightsquigarrow \q:=e^{\frac{\hbar_2}{2m}},\
    q_2\rightsquigarrow \q_2:=\q^2,\
    q_3\rightsquigarrow \q_3:=\omega_{mn}^{-1}e^{\frac{\hbar_3}{m}}.$$

Our next result establishes an isomorphism of these tensor products,
compatible with $\Phi^{\omega_{mn}}_{m,n}$.

\begin{thm}\label{intertwiner 1}
For any $r,\pp,\vv$ as above, define $p'_k,u'_k$ via
$p'_k:=p_k\ \mathrm{mod}\ m$ and $u'_k:=\omega_{mn}^{-p_k}e^{nv_k}$.
There exists a unique collection of constants
$c_{\boldsymbol{\lambda}}(m,n;\omega_{mn})\in R$ such that
$c_{\boldsymbol{\emptyset}}(m,n;\omega_{mn})=1$ and the corresponding
$R$-linear isomorphism of vector spaces
  $$I_{m,n;\omega_{mn}}^{r;\pp,\vv}\colon F^{\pp'}_R(\uu')\iso  {^{a}F}^{\pp}_R(\vv)\ \mathrm{given\ by}\
    |\boldsymbol{\lambda}\rangle \mapsto c_{\boldsymbol{\lambda}}(m,n;\omega_{mn})\cdot |\boldsymbol{\lambda}\rangle$$
satisfies the property
\begin{equation}\label{dag}
  I_{m,n;\omega_{mn}}^{r;\pp,\vv}(X(w))=
  \Phi^{\omega_{mn}}_{m,n}(X)(I_{m,n;\omega_{mn}}^{r;\pp,\vv}(w))
  \ \forall\ w\in F^{\pp'}_R(\uu'), X\in \{h_{i,k}, e_{i,k}, f_{i,k}\}_{i\in [m]}^{k\in \ZZ}.
\end{equation}
\end{thm}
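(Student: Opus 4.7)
The plan is to pin down the constants $c^{\Phi}_{\boldsymbol{\lambda}}$ recursively by imposing $(\dag)$ for the generators $e_{j,k}$, and then verify that the resulting diagonal map is compatible with $f_{j,k}$ and the Cartan generators $h_{i,l}$. First I establish the bridge identity linking the two spectral variables: from $u'_l = e^{nv_l}/\omega_{mn}^{p_l}$ together with $\q_1 = \omega_{mn}\exp(nh_1)$ and $\q_3 = \omega_{mn}^{-1}\exp(nh_3)$, a direct computation yields
\[
\chi^{(l)}_i \;=\; \omega_{mn}^{-i'}\,\exp\bigl(n\,x^{(l)}_i\bigr),\qquad i' \equiv c_i(\lambda^{(l)}) - 1 \pmod{mn}.
\]
Fix an addable box $(l,i)$ for $\boldsymbol{\lambda}$, let $i'\in[mn]$ be as above, and set $j := i' \bmod m$. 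Proposition~\ref{Fock_mr} gives $\langle \boldsymbol{\lambda}+1^{(l)}_i \mid e_{j,k}\mid \boldsymbol{\lambda}\rangle = A^+_{j}(\boldsymbol{\lambda},(l,i))\cdot (\chi^{(l)}_i)^k$ with $A^+_j$ the $\psi$-product prefactor. On the Yangian side, using $(\Phi 2)$ together with the fact that each $g_{i',r}$ lies in $\widehat{\Y}^{(mn),0,'}_{\hbar_1,\hbar_2}$ (hence acts diagonally) and with Proposition~\ref{Fock_ar}, one computes
\[
\langle \boldsymbol{\lambda}+1^{(l)}_i\mid \Phi^{\omega_{mn}}_{m,n}(e_{j,k})\mid \boldsymbol{\lambda}\rangle \;=\; (\chi^{(l)}_i)^k \cdot B^+_{i'}(\boldsymbol{\lambda},(l,i))\cdot g_{i'}(x^{(l)}_i)|_{\boldsymbol{\lambda}+1^{(l)}_i},
\]
where $B^+_{i'}$ is the $\phi$-product from Proposition~\ref{Fock_ar} and $g_{i'}(x^{(l)}_i)|_{\boldsymbol{\lambda}+1^{(l)}_i}$ is the eigenvalue of $g_{i'}(x^{(l)}_i)$ on $|\boldsymbol{\lambda}+1^{(l)}_i\rangle$. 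Both sides share the factor $(\chi^{(l)}_i)^k$, so $(\dag)$ for $e_{j,k}$ is an identity independent of $k$ that forces
\[
\frac{c^{\Phi}_{\boldsymbol{\lambda}+1^{(l)}_i}}{c^{\Phi}_{\boldsymbol{\lambda}}} \;=\; \frac{B^+_{i'}(\boldsymbol{\lambda},(l,i))\cdot g_{i'}(x^{(l)}_i)|_{\boldsymbol{\lambda}+1^{(l)}_i}}{A^+_{j}(\boldsymbol{\lambda},(l,i))}.
\]
Combined with $c^{\Phi}_{\boldsymbol{\emptyset}} = 1$, this determines $c^{\Phi}_{\boldsymbol{\lambda}}$ uniquely along any chosen sequence of box additions, giving the uniqueness assertion.

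For existence there are three consistencies to verify. The first is \emph{path-independence}: for any two distinct addable boxes $(l_1,i_1)\prec(l_2,i_2)$ of $\boldsymbol{\lambda}$, both insertion orders must produce the same $c^{\Phi}_{\boldsymbol{\lambda}+1^{(l_1)}_{i_1}+1^{(l_2)}_{i_2}}$, which amounts to a single ``two-box'' identity comparing how $A^+_j$, $B^+_{i'}$ and $g_{i'}(v)|_{\boldsymbol{\lambda}'}$ transform when one inserts the other box. The second is \emph{compatibility with $f_{j,k}$}: repeating the computation with $f_{j,k}$ and $(\Phi 3)$ produces an analogous ratio built from $A^-_j$, $B^-_{i'}$ and $g_{i'}(x^{(l)}_i)|_{\boldsymbol{\lambda}+1^{(l)}_i}$ with $\sigma^-_{i'}$ in place of $\sigma^+_{i'}$, which must match the $e$-ratio. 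The third is \emph{diagonal compatibility}: since both $h_{i,l}$ and $\Phi^{\omega_{mn}}_{m,n}(h_{i,l})$ act diagonally, $(\dag)$ for $h_{i,l}$ is a scalar identity independent of the $c^{\Phi}_{\boldsymbol{\lambda}}$. It is verified by translating the $\xi_{i'}(z)$-eigenvalues into the $B_{i'}$-form of $(\Phi 1)$ via Corollary~\ref{Borel}, applying the inverse Borel transform $B(\log(1-\nu/z)) = (1-e^{\nu w})/w$, and matching against the $\psi^{\pm}_i(z)$-eigenvalue on the quantum side; the root-of-unity averaging $\sum_{i'\equiv i}\omega_{mn}^{-li'}$ then singles out exactly the boxes of residue $i\bmod m$.

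The hardest step will be the two-box cocycle. When one inserts the second box, three things shift: $A^+_j(\boldsymbol{\lambda},(l_1,i_1))$ gains a $\psi$-factor if $(l_2,i_2)\prec(l_1,i_1)$ and the residues match, $B^+_{i'_1}$ gains the parallel $\phi$-factor, and the diagonal eigenvalue of $g_{i'_1}(v)$ picks up additional contributions from the extra $\xi_{j'}$-factors in its defining product. The cocycle asserts that these three corrections, together with their mirror images under $(l_1,i_1)\leftrightarrow (l_2,i_2)$, assemble into an expression symmetric in the two boxes. Using the telescoping identity $\psi(1/z)\psi(\q_2 z) = 1$ and the analogous factorization properties of $H_{i',j'}$ recorded in Section~3.1, this reduces to an elementary rational identity in the variables $x^{(l_1)}_{i_1}$ and $x^{(l_2)}_{i_2}$. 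Once that is established, the compatibility with $f$ follows by the same combinatorics with $\prec$ replaced by $\succ$, and the proof is complete.
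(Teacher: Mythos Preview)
Your outline is essentially the paper's own argument: reduce $(\dag)$ for the Cartan generators to a diagonal identity, reduce $(\dag)$ for $e_{j,k}$ and $f_{j,k}$ to a single ratio condition on the $c^{\Phi}_{\boldsymbol\lambda}$'s, and then verify a two-box cocycle. Two points deserve sharpening.

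First, a small slip: for $f_{j,k}$ the element $g_{i'}(\sigma^-_{i'})x^-_{i',0}=\sum_r g_{i',r}x^-_{i',r}$ acts with $x^-$ first, so the diagonal eigenvalue of $g_{i'}$ is taken on $|\boldsymbol\lambda\rangle$, not on $|\boldsymbol\lambda+1^{(l)}_i\rangle$ as you wrote. This matters for the $e$/$f$ consistency check.

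Second, the paper goes one step further than your outline and this step is what makes the remaining verifications genuinely ``straightforward''. Rather than leaving the ratio in the form $B^+_{i'}\cdot g_{i'}(x^{(l)}_i)|_{\boldsymbol\lambda+1^{(l)}_i}\big/A^+_j$, the paper evaluates the $g_{i'}$-eigenvalue explicitly (the $\xi_{j'}^{1/2}$ factors and the $\gamma_{i',j'}$ terms contribute products over \emph{all} boxes with exponent $1/2$) and combines it with the $\prec$-products from $A^+_j,B^+_{i'}$ to obtain a closed formula
\[
d^{\Phi}_{\boldsymbol\lambda,1^{(l)}_i}
=\prod_{a,s}\psi(\cdots)^{\epsilon^{(a,s)}_{(l,i)}}\cdot
\prod_{a,s}\Bigl(\tfrac{\cdots}{\cdots}\Bigr)^{-\epsilon^{(a,s)}_{(l,i)}},
\qquad
\epsilon^{(a,s)}_{(l,i)}=\pm\tfrac12\ \text{according as}\ (a,s)\gtrless(l,i),
\]
with the products running over all $(a,s)$ subject only to the residue conditions. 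In this symmetric form it is immediate that the $e$-ratio and the $f$-ratio coincide (the $\succ$- and $\prec$-products simply swap signs of $\epsilon$, matching the switch of $A^+\!\to A^-$, $B^+\!\to B^-$ and of the state on which $g_{i'}$ is evaluated), and the two-box cocycle collapses to an obvious symmetry. Your sketch is correct but stops just short of this simplification; carrying out the evaluation of $g_{i'}(x^{(l)}_i)$ on the Fock basis is the missing concrete step that turns the ``elementary rational identity'' you allude to into a visible tautology.
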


We say that $I_{m,n;\omega_{mn}}^{r;\pp,\vv}$ is \emph{compatible} with
$\Phi^{\omega_{mn}}_{m,n}$ if~(\ref{dag}) holds.

\begin{proof}[Proof of Theorem~\ref{intertwiner 1}]
$\ $

First, we claim that~(\ref{dag}) holds for any $w=|\boldsymbol{\lambda}\rangle,\ X=h_{i,k}$,
and an arbitrary choice  of $c_{\boldsymbol{\lambda}}(m,n;\omega_{mn})$.
This follows from the following result:

\begin{lem}\label{Compatibility with Cartan}
We have $\langle \bol| h_{i,k}|\bol\rangle=\langle \bol| \Phi(h_{i,k})|\bol\rangle$
for any $i\in [m],k\in \ZZ$, and $\bol$, where $\Phi(h_{i,k})$ is defined by~(\ref{Phi0}--\ref{Phi1}).
\end{lem}

\begin{proof}[Proof of Lemma~\ref{Compatibility with Cartan}]
\

Define $\chi^{(a)}_s:=\q_1^{\lambda^{(a)}_s}\q_3^{s-1}u'_a$ and
$x^{(a)}_s:=\lambda^{(a)}_s\frac{\hbar_1}{mn}+(s-1)\frac{\hbar_3}{mn}+v_a$.

\noindent
$\bullet$
For $k=0$, we have
  $$\langle \bol| h_{i,0}|\bol\rangle=
    \#\{(a,s)|c^{(a)}_s(\bol)\underset{m}\equiv i\}-\#\{(a,s)|c^{(a)}_s(\bol)\underset{m}\equiv i+1\},$$
  $$\langle \bol| \xi_{i',0}|\bol\rangle=
    \#\{(a,s)|c^{(a)}_s(\bol)\underset{mn}\equiv i'\}-\#\{(a,s)|c^{(a)}_s(\bol)\underset{mn}\equiv i'+1\}.$$
Hence, the equality
  $\langle \bol| h_{i,0}|\bol\rangle=\langle \bol| \sum^{i'\in [mn]}_{i'\equiv i} \xi_{i',0}|\bol\rangle$.

\noindent
$\bullet$
For $k\ne 0$, we have
\begin{equation}\label{h-eigenvalue}
  \langle \bol| h_{i,k}|\bol\rangle=
  \sum_{(a,s)}^{c^{(a)}_s(\bol)\underset{m}\equiv i}\frac{\q_1^{-k}-\q_3^k}{k(\q-\q^{-1})}\left(\chi^{(a)}_s\right)^k+
  \sum_{(a,s)}^{c^{(a)}_s(\bol)\underset{m}\equiv i+1}\frac{1-\q_2^k}{k(\q-\q^{-1})}\left(\chi^{(a)}_s\right)^k.
\end{equation}
Meanwhile, using the equality
  $B\left(\log\left(1-\frac{\nu}{z}\right)\right)=\frac{1-e^{\nu w}}{w}$,
we also get
\begin{equation}\label{B-eigenvalue}
  \langle \bol| B_{i'}(w)|\bol\rangle=
  \sum_{(a,s)}^{c^{(a)}_s(\bol)\underset{mn}\equiv i'} \frac{e^{w(x^{(a)}_s-\frac{\hbar_1}{mn})}-e^{w(x^{(a)}_s+\frac{\hbar_3}{mn})}}{w}+
  \sum_{(a,s)}^{c^{(a)}_s(\bol)\underset{mn}\equiv i'+1}\frac{e^{wx^{(a)}_s}-e^{w(x^{(a)}_s+\frac{\hbar_2}{mn})}}{w}.
\end{equation}
Recalling the explicit formulas for $\q_s$ and $u'_k$, the above
formulas~(\ref{h-eigenvalue}--\ref{B-eigenvalue}) immediately imply
the claimed equality
  $\langle \bol| h_{i,k}|\bol\rangle=
   \left\langle \bol| \frac{n}{\q-\q^{-1}}\sum^{i'\in [mn]}_{i'\equiv i} \omega_{mn}^{-ki'}B_{i'}(kn)|\bol\right\rangle$.
\end{proof}

Next, we will see under which conditions~(\ref{dag}) holds for all
$w=|\boldsymbol{\lambda}\rangle$ and $X=e_{i,k}\ \mathrm{or}\ f_{i,k}$.
To state the result, we introduce the following constants:
\begin{equation}\label{compatibility ratio 1}
\begin{split}
  & d_{\boldsymbol{\lambda},1^{(b)}_l}(m,n;\omega_{mn}):=
  (\q(1-\q_3)/(1-\q_1^{-1}))^{\delta_{m,1}/2}\cdot (-\hbar_1/\hbar_3)^{\delta_{mn,1}/2}\times\\
  & \prod_{(a,s)\ne (b,l)}^{c^{(a)}_s(\bol)\underset{m}\equiv c^{(b)}_l(\bol)-1}
       \psi\left(\q^{-1}_1\chi^{(a)}_s/\chi^{(b)}_l\right)^{\epsilon^{(a,s)}_{(b,l)}}
  \cdot
  \prod_{(a,s)\ne (b,l)}^{c^{(a)}_s(\bol)\underset{m}\equiv c^{(b)}_l(\bol)}
       \psi\left(\chi^{(b)}_l/\chi^{(a)}_s\right)^{\epsilon^{(a,s)}_{(b,l)}}
  \times\\
  & \prod_{(a,s)\ne (b,l)}^{c^{(a)}_s(\bol)\underset{mn}\equiv c^{(b)}_l(\bol)-1}
       \left(\frac{x^{(a)}_s-x^{(b)}_l+\frac{\hbar_3}{mn}}{x^{(a)}_s-x^{(b)}_l-\frac{\hbar_1}{mn}}\right)^{-\epsilon^{(a,s)}_{(b,l)}}
  \cdot
  \prod_{(a,s)\ne (b,l)}^{c^{(a)}_s(\bol)\underset{mn}\equiv c^{(b)}_l(\bol)}
       \left(\frac{x^{(b)}_l-x^{(a)}_s-\frac{\hbar_2}{mn}}{x^{(b)}_l-x^{(a)}_s}\right)^{-\epsilon^{(a,s)}_{(b,l)}},
\end{split}
\end{equation}
where we set
  $\epsilon^{(a,s)}_{(b,l)}:=
    \begin{cases}
      1/2 & \text{if}\ \ (a,s)\succ (b,l),\\
      -1/2 & \text{if}\ \ (a,s)\prec (b,l).
    \end{cases}$

\begin{lem}\label{Compatibility with e-f}
Both equalities
  $$I_{m,n;\omega_{mn}}^{r;\pp,\vv}(e_{i,k}(|\boldsymbol{\lambda}\rangle))=
    \Phi^{\omega_{mn}}_{m,n}(e_{i,k})(I_{m,n;\omega_{mn}}^{r;\pp,\vv}(|\boldsymbol{\lambda}\rangle))\
    \mathrm{for\ all}\ \boldsymbol{\lambda}, i\in [m], k\in \ZZ$$
and
  $$I_{m,n;\omega_{mn}}^{r;\pp,\vv}(f_{i,k}(|\boldsymbol{\lambda}\rangle))=
    \Phi^{\omega_{mn}}_{m,n}(f_{i,k})(I_{m,n;\omega_{mn}}^{r;\pp,\vv}(|\boldsymbol{\lambda}\rangle))\
    \mathrm{for\ all}\ \boldsymbol{\lambda}, i\in [m], k\in \ZZ$$
with $\Phi^{\omega_{mn}}_{m,n}(e_{i,k}), \Phi^{\omega_{mn}}_{m,n}(f_{i,k})$
defined by~(\ref{Phi2}--\ref{Phi3}) are equivalent to
\begin{equation}\label{ratio 1}
  \frac{c_{\boldsymbol{\lambda}+1^{(b)}_l}(m,n;\omega_{mn})}{c_{\boldsymbol{\lambda}}(m,n;\omega_{mn})}=
  d_{\boldsymbol{\lambda},1^{(b)}_l}(m,n;\omega_{mn}).
\end{equation}
\end{lem}

\begin{proof}[Proof of Lemma~\ref{Compatibility with e-f}]
\

This is a straightforward verification. The matrix coefficients of $e_{i,k}$ and $f_{i,k}$
are given by Proposition~\ref{Fock_mr}. To compute the matrix coefficients of
$\Phi^{\omega_{mn}}_{m,n}(e_{i,k})$ and $\Phi^{\omega_{mn}}_{m,n}(f_{i,k})$, one needs
to combine the formulas of Proposition~\ref{Fock_ar} with the identity~(\ref{B-eigenvalue})
and the general formula $e^{\nu\partial_v}G(v)=G(v+\nu)$.
The details are left to the interested reader.
\end{proof}


The uniqueness of $c_{\boldsymbol{\lambda}}(m,n;\omega_{mn})\in R$
satisfying the relation (\ref{ratio 1}) with the initial condition
$c_{\boldsymbol{\emptyset}}(m,n;\omega_{mn})=1$ is obvious.
The existence of such $c_{\boldsymbol{\lambda}}(m,n;\omega_{mn})$ is equivalent to
  $$d_{\boldsymbol{\lambda}+1^{(b)}_l,1^{(a)}_s}(m,n;\omega_{mn})\cdot
    d_{\boldsymbol{\lambda}, 1^{(b)}_l}(m,n;\omega_{mn})=
    d_{\boldsymbol{\lambda}+1^{(a)}_s,1^{(b)}_l}(m,n;\omega_{mn})\cdot
    d_{\boldsymbol{\lambda}, 1^{(a)}_s}(m,n;\omega_{mn})$$
for all possible $\boldsymbol{\lambda}, 1^{(b)}_l, 1^{(a)}_s$.
The verification of this identity is straightforward.
\end{proof}


\subsection{First proof of Theorem~\ref{main 1}}
$\ $


Recall the faithful $\Y^{(mn)}_R$-representation
  ${^{a}\bf{F}}_R:=\bigoplus_r \bigoplus_{\vv}^{\pp} {^{a}}F^{\pp}_R(\vv)$
from Corollary~\ref{faithfulness}(b). Let
  ${\bf{F}}^{0}_R\subset {\bf{F}}_R:=\bigoplus_r \bigoplus_{\uu'}^{\pp'} F^{\pp'}_R(\uu')$
be the subspace corresponding to $u'_k,p'_k$ as in Theorem~\ref{intertwiner 1}.
According to Theorem~\ref{intertwiner 1}, we have an $R$-linear isomorphism
${\bf{I}}\colon {\bf{F}}^{0}_R\iso {^{a}\bf{F}}_R$ compatible with $\Phi^{\omega_{mn}}_{m,n}$
in the following sense:
  $${\bf{I}}(X(w))=\Phi^{\omega_{m,n}}_{m,n}(X)({\bf{I}}(w))\
    \mathrm{for\ any}\ w\in {\bf{F}}^0_R,\ X\in \{h_{i,k},e_{i,k},f_{i,k}\}_{i\in [m]}^{k\in \ZZ}.$$
For any $X\in \{h_{i,k},e_{i,k},f_{i,k}\}_{i\in [m]}^{k\in \ZZ}$, consider the assignment
$X\mapsto \Phi^{\omega_{mn}}_{m,n}(X)$ defined by~(\ref{Phi0}--\ref{Phi3}).
As mentioned in Section~\ref{section partial proof}, Theorem~\ref{main 1} is equivalent to
this assignment being compatible with
all the defining relations of $\U^{(m),\omega_{mn}}_R$.
The latter follows immediately from the faithfulness of ${^{a}\bf{F}}_R$
combined with an existence of the compatible isomorphism $\bf{I}$.


\subsection{Geometric interpretation}\label{section geometric interpretation}
$\ $

The goal of this section is to provide geometric realization for

\noindent
$\bullet$
the Fock modules $F^p(u)$ and ${{^a}F}^p(v)$ of Section~\ref{section Fock},

\noindent
$\bullet$
the tensor products of Fock modules $F^{\pp}(\uu)$ and ${{^a}F}^{\pp}(\vv)$
of Section~\ref{section tensor Fock},

\noindent
$\bullet$
the intertwining isomorphisms $I^{r;\pp,\vv}_{m,n;\omega_{mn}}$ of
Section~\ref{I intertwiner}.

Given a quiver $Q$ and dimension vectors $\vv,\ww\in \NN^{\mathrm{vert}(Q)}$
($\mathrm{vert}(Q)$ is the set of vertices of $Q$), one can define the associated
Nakajima quiver variety $\M^Q(\vv,\ww)$. These varieties play a crucial role in
the geometric representation theory of quantum and Yangian algebras.
For the purposes of our paper, we will be interested only in the following set
of quivers $Q$ (labeled by $n\in \ZZ_{>0}$):

\noindent
$\bullet$
$Q_1$ is the Jordan quiver with one vertex ($\mathrm{vert}(Q)=[1]$) and one loop,

\noindent
$\bullet$
$Q_n$ (with $n>1$) is the cyclic quiver with $\mathrm{vert}(Q)=[n]$.

For any $Q_n$ as above and $\vv,\ww\in \NN^{[n]}$, consider $[n]$-graded
vectors spaces $V=\bigoplus_{i\in [n]} V_i$ and $W=\bigoplus_{i\in [n]} W_i$ such that
$\dim(V_i)=v_i$ and $\dim(W_i)=w_i$. Define
  $$M(\vv,\ww):=\bigoplus_{i\in [n]}\mathrm{Hom}(V_i, V_{i+1})\oplus
    \bigoplus_{i\in [n]}\mathrm{Hom}(V_i, V_{i-1})\oplus
    \bigoplus_{i\in [n]}\mathrm{Hom}(W_i, V_i)\oplus
    \bigoplus_{i\in [n]}\mathrm{Hom}(V_i, W_i).$$
Elements of $M(\vv,\ww)$ can be written as
  $\left(\mathbf{B}=\{B_i\}, \mathbf{\bar{B}}=\{\bar{B}_i\}, \mathbf{a}=\{a_i\}, \mathbf{b}=\{b_i\}\right)_{i\in [n]}$.
Consider the moment map
  $\mu\colon M(\vv,\ww)\to \bigoplus_{i\in [n]} \mathrm{End}(V_i)$
defined by
  $$\mu(\mathbf{B},\mathbf{\bar{B}},\mathbf{a},\mathbf{b})=
    \sum_{i\in [n]} (B_{i-1}\bar{B}_i-\bar{B}_{i+1}B_i+a_ib_i).$$
A point $(\mathbf{B},\mathbf{\bar{B}},\mathbf{a},\mathbf{b})\in \mu^{-1}(0)$
is said to be \emph{stable} if there is no non-zero $(\mathbf{B},\mathbf{\bar{B}})$-invariant
subspace of $V$ contained in $\mathrm{Ker}(\mathbf{b})$. Let us denote by $\mu^{-1}(0)^s$ the
set of stable points. An important property of
$\mu^{-1}(0)^s$ is that the group $G_\vv=\prod_{i\in [n]} \mathrm{GL}(V_i)$ acts freely
on $\mu^{-1}(0)^s$. The Nakajima quiver variety $\M(\vv,\ww)$
is defined as a geometric quotient
  $$\M(\vv,\ww)=\M^{Q_n}(\vv,\ww)=\mu^{-1}(0)^s/G_\vv.$$

There is a natural action of the torus
  $\TT_{\ww}:=\CC^\times \times \CC^\times \times \prod_{i\in [n]} (\CC^\times)^{w_i}$
on $\M(\vv,\ww)$ for any $\vv$. Moreover, it is known that the set of
$\TT_{\ww}$-fixed points is parametrized by the tuples of Young diagrams
  $\boldsymbol{\lambda}=\{\lambda^{(i,k)}\}_{i\in [n]}^{1\leq k\leq w_i}$
satisfying the following requirement. For any $i,k$ as above, let us color
the boxes of $\lambda^{(i,k)}$ into $n$ colors $[n]$, so that the box staying
in the $a$-th row and $b$-th column has color $i+a-b$. Our requirement is that
the total number of color $\iota$ boxes equals $v_\iota$ for every $\iota\in [n]$.

For $\ww\in \NN^{[n]}$, consider the direct sum of equivariant cohomology
$H(\ww)=\bigoplus_{\vv} H^\bullet_{\TT_{\ww}}(\M(\vv,\ww))$. It is a module over
  $H^\bullet_{\TT_{\ww}}(\mathrm{pt})=
   \CC[\mathfrak{t}_{\ww}]=\CC\left[s_1,s_2,\{x_{i,k}\}_{i\in [n]}^{1\leq k\leq w_i}\right]$,
where $\mathfrak{t}_\ww:=\mathrm{Lie}(\TT_\ww)$. Define
  $H(\ww)_{\mathrm{loc}}:=
   H(\ww)\otimes_{H^\bullet_{\TT_\ww}(\mathrm{pt})} \mathrm{Frac}(H^\bullet_{\TT_\ww}(\mathrm{pt}))$.
Let $[\boldsymbol{\lambda}]$ be the direct image of the fundamental cycle
of the $\TT_\ww$-fixed point, corresponding to $\boldsymbol{\lambda}$. The set
$\{[\boldsymbol{\lambda}]\}$ forms a basis of $H(\ww)_{\mathrm{loc}}$.

Let us consider an analogous direct sum of equivariant K-groups
$K(\ww)=\bigoplus_{\vv} K^{\TT_{\ww}}(\M(\vv,\ww))$. It is a module over
  $K^{\TT_{\ww}}(\mathrm{pt})=\CC[\TT_\ww]=
   \CC\left[t^{\pm 1}_1, t^{\pm 1}_2,\{\chi^{\pm 1}_{i,k}\}_{i\in [n]}^{1\leq k\leq w_i}\right]$.
Define the localized version
  $K(\ww)_{\mathrm{loc}}:=
   K(\ww)\otimes_{K^{\TT_\ww}(\mathrm{pt})} \mathrm{Frac}(K^{\TT_\ww}(\mathrm{pt}))$.
Let $[\boldsymbol{\lambda}]$ be the direct image of the structure sheaf
of the $\TT_\ww$-fixed point, corresponding to $\boldsymbol{\lambda}$.
The set $\{[\boldsymbol{\lambda}]\}$ forms a basis of $K(\ww)_{\mathrm{loc}}$.

The following result goes back to~\cite{Na,V} for $n>1$ and~\cite{SV1,SV2,T} for $n=1$ (cf.~\cite{K}):
\begin{thm}
(a) For any $\ww\in \NN^{[n]}$, there is a natural action of $\Y^{(n)}_{s_1,-s_1-s_2,s_2}$
on $H(\ww)_{\mathrm{loc}}$.

\noindent
(b) For any $\ww\in \NN^{[n]}$, there is a natural action of
$\U^{(n)}_{t_1,t^{-1}_1t^{-1}_2,t_2}$ on $K(\ww)_{\mathrm{loc}}$.
\end{thm}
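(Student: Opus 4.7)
The plan, following~\cite{Na,V} for $n>1$ and~\cite{SV1,SV2,T} for $n=1$, is to define the actions via geometric correspondences and then verify the defining relations from Section~1. First I would introduce, for each $i\in[n]$, the Hecke correspondence $\mathfrak{P}_i(\vv,\ww)\subset \M(\vv,\ww)\times\M(\vv+\delta_i,\ww)$ parametrizing pairs $(p,p')$ where $p'$ is obtained from $p$ by extending by a one-dimensional $i$-colored subquotient; this subvariety is smooth and carries a tautological line bundle $\mathcal{L}_i$ recording the added line. Equivariant push-pull along $\mathfrak{P}_i$ with insertion of $c_1(\mathcal{L}_i)^r$ (respectively $\mathcal{L}_i^r$ for part~(b)) defines the operators $x^\pm_{i,r}$ of~(a) (respectively the generators $e_{i,k},f_{i,k}$ of~(b)). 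The Cartan series $\xi_i(z)$ and $\psi_i^\pm(z)$ act diagonally on the fixed-point basis $\{[\boldsymbol{\lambda}]\}$ as appropriate ratios of characteristic polynomials of the tautological bundles $\mathcal{V}_{i-1},\mathcal{V}_i,\mathcal{V}_{i+1}$, with the equivariant parameters $s_1,s_2$ in the cohomological case and $t_1,t_2$ in the $K$-theoretic one.

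Second, I would verify the defining relations using $\TT_\ww$-fixed-point localization. The diagonal relations (Y0), (Y1), (Y4) and their toroidal counterparts (T0), (T1), (T4$'$) reduce to combinatorial identities that can be matched against the explicit formulas from Propositions~\ref{Fock_ar} and~\ref{Fock_mr}, whose matrix coefficients are structurally identical to those of the localized (co)homology of $\M(\vv,\ww)$ when $\ww$ is a single standard basis vector (the \emph{Fock case}). The quadratic exchange relations (Y2), (Y3) and (T2), (T3) translate into rational-function identities among the characteristic classes of the tautological bundles and follow from decomposing the composed correspondence $\mathfrak{P}_i\ast\mathfrak{P}_j$ into its geometric components, which in turn produces the prefactors $p_{i,j}(z,w)$ and $g_{i,j}(z,w)$ from Section~1.

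The main obstacle will be the \emph{Serre relations} (Y5), (T6). For $n>2$ these reduce, as in~\cite{V}, to the finite-type $\ssl_3$ case by restricting to the subquiver on vertices $i, i\pm 1$. For $n=2$ the cubic Serre relation must be verified directly: one evaluates the threefold nested commutator on each fixed-point basis vector and invokes a residue-type cancellation among the resulting rational functions. For $n=1$, following~\cite{SV1,T}, one first verifies the cubic Serre-like relations on the cohomology or $K$-theory of the Hilbert scheme of points on $\CC^2$ (the case $\ww=(1)$ for the quiver $Q_1$), where the explicit formulas of Propositions~\ref{Fock_m} and~\ref{Fock_a} make the identities tractable, and then transfers them to general $\ww$ by identifying $H(\ww)_{\mathrm{loc}}$ (respectively $K(\ww)_{\mathrm{loc}}$) with a tensor product of Fock modules via the stable basis, together with a faithfulness statement of the type established in Corollary~\ref{faithfulness}.
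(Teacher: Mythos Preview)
The paper does not prove this theorem at all: it is stated as a known result, attributed to~\cite{Na,V} for $n>1$ and to~\cite{SV1,SV2,T} for $n=1$, and is simply quoted for later use in the geometric interpretation of the intertwiners $I^{r;\pp,\vv}_{m,n;\omega_{mn}}$ and $J^{r;\pp,\uu}_{m,n;\omega',\omega}$. Your proposal is therefore not a comparison target but rather a summary of the strategy in those references; as such it is broadly accurate (Hecke correspondences with tautological line bundle insertions for the raising/lowering operators, tautological-bundle eigenvalues for the Cartan currents, and localization to check relations), and goes well beyond what the paper itself supplies. If your goal is to match the paper, a one-line citation suffices; if your goal is an actual proof, be aware that the Serre-relation step for $n=1,2$ is genuinely delicate in the cited sources and your sketch of ``residue-type cancellation'' would need to be made precise, and that for general $\ww$ the identification with tensor products of Fock modules via stable bases is not how~\cite{V,SV2} originally argued (they work directly with correspondences on the full quiver variety), though it is a legitimate alternative route.
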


In what follows, we set
$h_1=s_1,\ h_2=-s_1-s_2,\ h_3=s_2$ and $q_1=t_1,\ q_2=t^{-1}_1t^{-1}_2,\ q_3=t_2$.

\begin{prop}\label{geometry Fock}
For $p\in [n]$, define $\ww^{(p)}=(0,\ldots,1,\ldots,0)\in \NN^{[n]}$
with $1$ at the $p$-th place.

\noindent
(a) There is an isomorphism of $\U^{(n)}_{q_1,q_2,q_3}$-representations
 $\alpha\colon F^p(\chi_{p,1})\iso K(\ww^{(p)})_{\mathrm{loc}}$.

\noindent
(b) There is an isomorphism of $\Y^{(n)}_{h_1,h_2,h_3}$-representations
 ${^a}\alpha\colon {{^a}F}^p(x_{p,1})\iso H(\ww^{(p)})_{\mathrm{loc}}$.

\noindent
(c) Both isomorphisms $\alpha$ and ${^a}\alpha$ are given by diagonal matrices in
the bases $\{|\lambda\rangle\}$ and $\{[\lambda]\}$.
\end{prop}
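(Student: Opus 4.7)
The plan is to prove parts (a), (b), and (c) uniformly by matching explicit matrix coefficients in the fixed-point basis. First, I would recall the standard identification of the $\TT_{\ww^{(p)}}$-fixed points of $\bigsqcup_\vv \M(\vv,\ww^{(p)})$ with the set of all partitions: in a partition $\lambda$, the box at row $a$ and column $b$ is assigned color $p+a-b\in[n]$, and the dimension vector $\vv(\lambda)$ records the number of boxes of each color. This matches the partition labeling of the Fock bases $\{|\lambda\rangle\}$, and in particular the addable (resp.\ removable) boxes of color $j$ correspond exactly to the support conditions $c_s(\lambda)\equiv j+1$ governing the matrix coefficients of $e_j(z), x^+_j(z)$ (resp.\ $f_j(z), x^-_j(z)$) in Propositions~\ref{Fock_m},~\ref{Fock_a}. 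The spectral parameter of the Fock module gets identified with the character of the one-dimensional framing: $u=\chi_{p,1}$ for $F^p(u)$ and $v=x_{p,1}$ for $^{a}F^p(v)$.

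Next, I would declare $\alpha$ and $^{a}\alpha$ to be the diagonal maps $|\lambda\rangle\mapsto c_\lambda[\lambda]$ and $|\lambda\rangle\mapsto c'_\lambda[\lambda]$ with $c_\emptyset=c'_\emptyset=1$, the scalars being forced recursively by the requirement that the raising currents intertwine: for each transition $\lambda\to\lambda+1_s$ adding a box of color $j$, the ratio $c_{\lambda+1_s}/c_\lambda$ must equal the quotient of the geometric matrix coefficient of $e_j(z)$ (resp.\ $x^+_j(z)$) by the Fock one. Existence of such consistent scalars amounts to a cocycle check on any pair of commuting box additions, in the same spirit as in the proofs of Theorems~\ref{intertwiner 1},~\ref{intertwiner 2}. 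Compatibility with the diagonal Cartan currents $\psi^\pm_j(z), \xi_j(z)$ is then automatic: their eigenvalues on $[\lambda]$ computed by equivariant localization from the $\TT_{\ww^{(p)}}$-character of $T_\lambda\M(\vv(\lambda),\ww^{(p)})$ reproduce the infinite products in the Fock formulas, after truncation via $\psi(1/z)\psi(q_2z)=1$ and its additive analogue. Intertwining on the lowering currents $f_j(z), x^-_j(z)$ then follows from the commutation relations (T1) and (Y1) combined with what has already been established.

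The main obstacle is the explicit dictionary between equivariant-localization matrix coefficients and the rational-function products over addable/removable boxes in Propositions~\ref{Fock_m},~\ref{Fock_a}. This dictionary is the content of the geometric constructions of~\cite{Na,V} for $n>1$ and of~\cite{SV1,SV2,T} for $n=1$; I would invoke those computations to identify the tangent-space characters with the $\psi$- and $\phi$-factors appearing in the Fock formulas, at which point both the existence of consistent $c_\lambda, c'_\lambda$ and the claimed diagonal intertwining become transparent.
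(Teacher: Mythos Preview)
Your approach is correct in outline but takes a different route from the paper. The paper's proof is a one-line reduction: the case $n=1$ is established in~\cite[Section 4]{T}, and the general case is deduced from it by the standard procedure of taking $\ZZ/n\ZZ$-invariants (passing from the Jordan quiver $Q_1$ to the cyclic quiver $Q_n$ via the $\ZZ/n\ZZ$-action on $\M^{Q_1}$). You instead propose to treat all $n$ directly by matching fixed-point matrix coefficients against Propositions~\ref{Fock_m} and~\ref{Fock_a}, citing~\cite{Na,V} for $n>1$ and~\cite{SV1,SV2,T} for $n=1$. Both routes are legitimate: the paper's reduction is shorter and leverages the already-worked-out $n=1$ case, while yours is more explicit and self-contained at the cost of redoing the localization computation uniformly in $n$.

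One small caution: your claim that intertwining on $f_j(z), x^-_j(z)$ ``follows from the commutation relations (T1) and (Y1)'' is not quite a clean logical deduction, since (T1) expresses $\psi$ in terms of $[e,f]$ rather than $f$ in terms of $e$ and $\psi$. What actually happens (cf.\ the proofs of Theorems~\ref{intertwiner 1},~\ref{intertwiner 2}) is that the $e$-intertwining and the $f$-intertwining each impose a ratio condition $c_{\lambda+1_s}/c_\lambda = d_{\lambda,1_s}$, and these two conditions turn out to be \emph{identical}; so once the scalars are fixed by $e$, the $f$-compatibility is automatic for that reason, not via (T1). This does not affect the correctness of your plan, only the justification of that step.
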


\begin{proof}[Proof of Proposition~\ref{geometry Fock}]
$\ $

The $n=1$ case of this result was treated in~\cite[Section 4]{T},
while the general case can be deduced from the former
by the standard procedure of ``taking a $\ZZ/n\ZZ$-invariant part''.
\end{proof}

The higher-rank generalization of this result is straightforward:

\begin{prop}\label{geometry tensor Fock}
For any $n\in \ZZ_{>0}$ and $\ww=(w_0,\ldots,w_{n-1})\in \NN^{[n]}$, the following holds:

\noindent
(a) There is an isomorphism of $\U^{(n)}_{q_1,q_2,q_3}$-representations
  $\alpha\colon \bigotimes_{i=0}^{n-1} \bigotimes_{k=1}^{w_i} F^{i}(\chi_{i,k})\iso K(\ww)_{\mathrm{loc}}$.

\noindent
(b) There is an isomorphism of $\Y^{(n)}_{h_1,h_2,h_3}$-representations
  ${^a}\alpha\colon \bigotimes_{i=0}^{n-1} \bigotimes_{k=1}^{w_i} {{^a}F}^{i}(x_{i,k})\iso H(\ww)_{\mathrm{loc}}$.

\noindent
(c) Both isomorphisms $\alpha$ and ${^a}\alpha$ are given by diagonal matrices in the bases
$\{|\boldsymbol{\lambda}\rangle\}$ and $\{[\boldsymbol{\lambda}]\}$.

\noindent
(d) Parts (a,b) hold for an arbitrary reordering of the tensor products from the left-hand sides.
\end{prop}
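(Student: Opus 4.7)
The plan is to proceed in close parallel with the single-Fock case of Proposition~\ref{geometry Fock}, extending it to tensor products by matching matrix coefficients in the fixed-point basis. First, I would identify the indexing sets on the two sides: the $\TT_{\ww}$-fixed points of $\M(\vv,\ww)$ are parametrized by tuples $\boldsymbol{\lambda}=\{\lambda^{(i,j)}\}_{i\in[n],\,1\le j\le w_i}$ of Young diagrams whose total color-$\iota$ box count equals $v_\iota$, while a basis of $\bigotimes_{i,j}F^{i}(\chi_{i,j})$ (resp.\ $\bigotimes_{i,j}{^a}F^{i}(x_{i,j})$) is given by $\{|\boldsymbol{\lambda}\rangle\}$ with the same combinatorial labeling. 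This sets up a natural bijection between $\{[\boldsymbol{\lambda}]\}$ and $\{|\boldsymbol{\lambda}\rangle\}$.

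Next, I would define $\alpha$ (resp.\ ${^a}\alpha$) as the diagonal linear map sending $|\boldsymbol{\lambda}\rangle$ to $c_{\boldsymbol{\lambda}}\cdot[\boldsymbol{\lambda}]$ for nonzero scalars $c_{\boldsymbol{\lambda}}$, normalized by $c_{\boldsymbol{\emptyset}}=1$. The coefficients $c_{\boldsymbol{\lambda}}$ are determined recursively by requiring that the matrix coefficients of $e_j(z)$ (resp.\ $x^+_j(z)$) on the geometric side, computed via localization of the standard correspondences from~\cite{Na,V,SV1,SV2,T}, agree with those from Proposition~\ref{Fock_mr} (resp.\ Proposition~\ref{Fock_ar}) after the rescaling. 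Explicitly, for each covering pair $\boldsymbol{\lambda}\to\boldsymbol{\lambda}+1^{(l)}_i$, the ratio $c_{\boldsymbol{\lambda}+1^{(l)}_i}/c_{\boldsymbol{\lambda}}$ is dictated by a ratio of tangent-character factors coming from the normal bundle of the Hecke correspondence at the relevant fixed points. Well-definedness (path-independence) of $c_{\boldsymbol{\lambda}}$ reduces to a standard hexagon-type consistency check over pairs of addable boxes, entirely analogous to the verification in the proof of Theorem~\ref{intertwiner 1}.

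With $c_{\boldsymbol{\lambda}}$ fixed by the compatibility with $e_j(z)$ (resp.\ $x^+_j(z)$), I would then check that the compatibilities with $\psi^\pm_j(z)$ (resp.\ $\xi_j(z)$) and with $f_j(z)$ (resp.\ $x^-_j(z)$) hold automatically. For the diagonal generators the check is immediate: both sides act diagonally in the fixed-point basis, and equality of eigenvalues follows from the identification of equivariant parameters $\chi_{i,j}$ and $x_{i,j}$ with the tautological Chern roots of $W$. For the lowering operators, the check reduces to an adjunction between the correspondences realizing $e_j$ and $f_j$, combined with the explicit formulas of Propositions~\ref{Fock_mr} and~\ref{Fock_ar}; this produces exactly the required symmetry between addable and removable boxes, so the same $c_{\boldsymbol{\lambda}}$ intertwines the $f$-action as well. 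Part (c) is then built into the construction.

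Part (d) follows because the tensor product action via the formal coproducts~(\ref{coproduct 1}) and~(\ref{coproduct 2}) is independent of the ordering of factors, up to a canonical (diagonal in the fixed-point basis) twist whenever the equivariant parameters are not in resonance---the non-resonance assumption is precisely what rules out poles in the R-matrix entries. On the geometric side, the fixed-point basis and the matrix coefficients are manifestly independent of any ordering choice, so the resulting isomorphism descends to any permutation of the tensor factors. The main obstacle I anticipate is purely technical: the careful bookkeeping of signs, tangent weights, and normalization constants at the fixed points of $\M(\vv,\ww)$; however, all the nontrivial structural input is already contained in Proposition~\ref{geometry Fock}, and the higher-rank statement amounts to organizing that input multiplicatively across tensor factors.
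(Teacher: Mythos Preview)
Your proposal is correct and expands in detail on what the paper leaves implicit: the paper offers no proof beyond the sentence ``The higher-rank generalization of this result is straightforward,'' relying on Proposition~\ref{geometry Fock} (whose $n=1$ case is in~\cite{T} and whose $n>1$ case is reduced to it by taking $\ZZ/n\ZZ$-invariants). Your outline---matching matrix coefficients in the fixed-point bases via a diagonal ansatz, fixing the rescaling constants $c_{\boldsymbol{\lambda}}$ through the $e$-action, and then verifying the $\psi$- and $f$-compatibilities---is exactly the mechanism that makes this generalization ``straightforward,'' so the two are in full agreement.
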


There exists a well-known relation between the Nakajima quiver varieties
associated to the quivers $Q_m$ and $Q_{mn}$. Let
$\ww=\sum_{k=1}^r\ww^{(p_k)}$ and $\ww'=\sum_{k=1}^r\ww^{(p_k')}$ with
$p_k\in [mn]$ and $p'_k \in [m]$, where $p'_k:=p_k \bmod m$ (compare to Theorem~\ref{intertwiner 1}).
Then, there is an action of the group $\ZZ/mn\ZZ$ (which factors through its quotient
$(\ZZ/mn\ZZ)/(\ZZ/m\ZZ)\simeq \ZZ/n\ZZ$) on $\bigsqcup_{\vv'} \M^{Q_m}(\vv',\ww')$,
such that the variety of fixed points is isomorphic to $\bigsqcup_{\vv} \M^{Q_{mn}}(\vv,\ww)$.
Therefore, we have an  inclusion
  $\bigsqcup_{\vv} \M^{Q_{mn}}(\vv,\ww)\hookrightarrow \bigsqcup_{\vv'}\M^{Q_m}(\vv',\ww')$.
Let $\mathcal{I}_{m,n}\colon K(\ww')_{\mathrm{loc}} \to H(\ww)_{\mathrm{loc}}$
be a composition of an equivariant Chern character map and a pull-back in
localized equivariant cohomology. This map is diagonal in the fixed point bases,
hence, it is an isomorphism.

Our main result of this subsection reveals a geometric realization
of $I^{r;\pp,\vv}_{m,n;\omega_{mn}}$.

\begin{thm}\label{geometric intertwiner}
The following diagram is commutative:

\setlength{\unitlength}{1cm}
\begin{picture}(4,3.2)
  \put(4.0,2.5){$F^{\pp'}(\uu')$}
  \put(8.4,2.5){${^a}F^{\pp}(\vv)$}
  \put(4.0,0.5){$K(\ww')_{\mathrm{loc}}$}
  \put(8.4,0.5){$H(\ww)_{\mathrm{loc}}$}

  \put(5.4,2.6){\vector (1,0){2.8}}
  \put(4.6,2.3){\vector (0,-1){1.4}}
  \put(5.8,0.6){\vector (1,0){2.3}}
  \put(9.0,2.3){\vector (0,-1){1.4}}

  \put(6.4,2.75){$I^{r;\pp,\vv}_{m,n;\omega_{mn}}$}
  \put(4.3,1.5){$\alpha$}
  \put(8.5,1.5){${^a}\alpha$}
  \put(6.5,0.75){$\mathcal{I}_{m,n}$}
\end{picture}
\end{thm}

\begin{proof}[Proof of Theorem~\ref{geometric intertwiner}]
$\ $

This tedious verification is straightforward and is left to the interested reader.
\end{proof}


\section{Shuffle interpretation}\label{section proof via shuffle}

In this section, following~\cite{Neg0}--\cite{Neg2} we recall the shuffle realizations
of \emph{positive halves} $\U^{(n),>}_{q_1,q_2,q_3}$ and $\Y^{(n),>}_{h_1,h_2,h_3}$ and
provide a shuffle interpretation of $\Phi^{\omega_{mn}}_{m,n}$.
This implies the compatibility of $\Phi^{\omega_{mn}}_{m,n}$ with (T6), completing our
straightforward proof of Theorem~\ref{main 1} from Section~\ref{section partial proof}.


\subsection{Multiplicative shuffle algebras $S^{(n)}$}
\

Consider an $\NN^{[n]}$-graded $\CC$-vector space
  $\sS^{(n)}=\underset{\ol{k}\in \NN^{[n]}}\oplus\sS^{(n)}_{\overline{k}},$
where $\sS^{(n)}_{(k_0,\ldots,k_{n-1})}$ consists of $\prod S_{k_i}$-symmetric
rational functions in the variables $\{x_{i,r}\}_{i\in [n]}^{1\leq r\leq k_i}$.
Following~\cite{FT2}, we also fix an $n\times n$ matrix of rational functions
$(\omega_{i,j}(z,w))_{i,j\in [n]} \in \mathrm{Mat}_{n\times n}(\CC(z,w))$
by setting
\begin{equation}\label{omega matrix}
  \omega_{i,j}(z,w)=d^{-\delta_{j,i+1}\delta_{n>2}}
  \left(\frac{z-q_3^{-1}w}{z-w}\right)^{\delta_{j,i+1}}
  \left(\frac{z-q_2^{-1}w}{z-w}\right)^{\delta_{j,i}}
  \left(\frac{z-q_1^{-1}w}{z-w}\right)^{\delta_{j,i-1}}.
\end{equation}
Let us introduce the bilinear $\star$ product on $\sS^{(n)}$:
for $F\in \sS^{(n)}_{\overline{k}}, G\in \sS^{(n)}_{\overline{l}}$,
define $F\star G\in \sS^{(n)}_{\overline{k}+\overline{l}}$ by
\begin{equation}\label{star product on mult-shuffle}
\begin{split}
  & (F\star G)(x_{0,1},\ldots,x_{0,k_0+l_0};\ldots;x_{n-1,1},\ldots, x_{n-1,k_{n-1}+l_{n-1}}):=\\
  & \Sym \left(F\left(\{x_{i,r}\}_{i\in [n]}^{1\leq r\leq k_i}\right)G\left(\{x_{j,s}\}_{j\in [n]}^{k_{j}<s\leq k_{j}+l_{j}}\right)\cdot
  \prod_{i\in [n]}^{j\in [n]}\prod_{r\leq k_i}^{s>k_{j}}\omega_{i,j}(x_{i,r},x_{j,s})\right).
\end{split}
\end{equation}
Here and afterwards, given a function
$f\in \CC(\{x_{i,1},\ldots,x_{i,m_i}\}_{i\in [n]})$,
we define its symmetrization as follows:
 $\Sym(f):=\prod_{i\in [n]}\frac{1}{m_i!}\cdot
  \sum_{(\sigma_0,\ldots,\sigma_{n-1})\in S_{m_0}\times \ldots\times S_{m_{n-1}}}
  f(\{x_{i,\sigma_i(1)},\ldots,x_{i,\sigma_i(m_i)}\}_{i\in [n]}).$

\medskip
This endows $\sS^{(n)}$ with a structure of an associative unital algebra with
the unit $\textbf{1}\in \sS^{(n)}_{(0,\ldots,0)}$. We will be interested only in
a certain subspace of $\sS^{(n)}$, defined by the \emph{pole} and \emph{wheel conditions}:

\noindent
$\bullet$
We say that $F\in \sS^{(n)}_{\overline{k}}$ \emph{satisfies the pole conditions} if and only if
  $$F=\frac{f(x_{0,1},\ldots,x_{n-1,k_{n-1}})}
  {\prod_{i\in [n]}\prod_{r\leq k_i,s\leq k_{i+1}}^{(i,r)\ne (i+1,s)}(x_{i,r}-x_{i+1,s})},\
    \mathrm{where}\ f\in (\CC[x_{i,r}^{\pm 1}]_{i\in [n]}^{1\leq r\leq k_i})^{\prod S_{k_i}}.$$

\noindent
$\bullet$
We say that $F\in \sS^{(n)}_{\overline{k}}$ \emph{satisfies the wheel conditions} if and only if
  $$F(\{x_{i,r}\})=0\ \mathrm{once}\ x_{i,r_1}/x_{i+\epsilon,l}=qd^{\epsilon}\
    \mathrm{and}\ x_{i+\epsilon,l}/x_{i,r_2}=qd^{-\epsilon}\
    \mathrm{for\ some}\ \epsilon, i, r_1, r_2, l,$$
where $\epsilon\in \{\pm 1\}, i\in [n], 1\leq r_1,r_2\leq k_i, 1\leq l\leq k_{i+\epsilon}$
and we use the cyclic notation as before.

Let $S^{(n),>}_{\overline{k}}\subset \sS^{(n)}_{\overline{k}}$ be the subspace of
all elements $F$ satisfying the above two conditions. Set
  $S^{(n),>}:=\underset{\overline{k}\in \NN^{[n]}}\oplus S^{(n),>}_{\overline{k}}.$
Further
  $S^{(n),>}_{\overline{k}}=\oplus_{r\in \ZZ}S^{(n),>}_{\overline{k},r},\
   S^{(n),>}_{\overline{k},r}:=\{F\in S^{(n),>}_{\overline{k}}|\mathrm{tot.deg}(F)=r\}$.
It is straightforward to see that the subspace $S^{(n),>}\subset\sS^{(n)}$ is $\star$-closed.

\begin{defn}
The algebra $(S^{(n),>},\star)$ is called the multiplicative shuffle algebra
(of $\widehat{\ssl}_n$-type).
\end{defn}

Let $\U^{(n),>}$ be the subalgebra of $\U^{(n)}_{q_1,q_2,q_3}$ generated by
$\{e_{i,k}\}_{i\in [n]}^{k\in \ZZ}$. The former is known to be generated by
$\{e_{i,k}\}_{i\in [n]}^{k\in \ZZ}$ with the defining relations (T2,T6).
We equip $\U^{(n),>}$ with the $\NN^{[n]}\times \ZZ$--grading by assigning
$\deg(e_{i,k})=(1_i;k)$ for all $i\in [n],k\in \ZZ$, where $1_i\in \NN^{[n]}$
is the vector with the $i$-th coordinate $1$ and all other coordinates being zero.

The following beautiful result is due to A.~Negut:

\begin{thm}\cite{Neg0, Neg1}\label{Negut theorem 1}
The assignment $e_{i,k}\mapsto x_{i,1}^k$ for $i\in [n], k\in \ZZ$,
gives rise to an $\NN^{[n]}\times \ZZ$--graded $\CC$-algebra
isomorphism $\Theta_n\colon \U^{(n),>}\iso S^{(n),>}$.
\end{thm}


\subsection{Additive shuffle algebras $W^{(n)}$}
\

Consider an $\NN^{[n]}$-graded $\CC$-vector space
  $\sW^{(n)}=\underset{\ol{k}\in \NN^{[n]}}\oplus\sW^{(n)}_{\overline{k}},$
where $\sW^{(n)}_{(k_0,\ldots,k_{n-1})}$ consists of $\prod S_{k_i}$-symmetric
rational functions in the variables $\{x_{i,r}\}_{i\in [n]}^{1\leq r\leq k_i}$.
We also fix an $n\times n$ matrix of rational functions
  $(\varpi_{i,j}(z,w))_{i,j\in [n]} \in \mathrm{Mat}_{n\times n}(\CC(z,w))$
by setting
\begin{equation}\label{varpi matrix}
  \varpi_{i,j}(z,w)=
  \left(\frac{z-w+h_3}{z-w}\right)^{\delta_{j,i+1}}
  \left(\frac{z-w+h_2}{z-w}\right)^{\delta_{j,i}}
  \left(\frac{z-w+h_1}{z-w}\right)^{\delta_{j,i-1}}.
\end{equation}
We endow $\sW^{(n)}$ with a structure of an associative unital algebra via
the bilinear $\star$ product defined by the formula~(\ref{star product on mult-shuffle})
with $\omega_{i,j}(z,w)\rightsquigarrow \varpi_{i,j}(z,w)$ and with the unit
$\textbf{1}\in \sW^{(n)}_{(0,\ldots,0)}$.  We will be interested only in a certain subspace
of $\sW^{(n)}$, defined by the \emph{pole} and \emph{wheel conditions}:

\noindent
$\bullet$
We say that $F\in \sW^{(n)}_{\overline{k}}$ \emph{satisfies the pole conditions} if and only if
  $$F=\frac{f(x_{0,1},\ldots,x_{n-1,k_{n-1}})}{\prod_{i\in [n]}\prod_{r\leq k_i,s\leq k_{i+1}}^{(i,r)\ne (i+1,s)}(x_{i,r}-x_{i+1,s})},\
    \mathrm{where}\ f\in (\CC[x_{i,r}]_{i\in [n]}^{1\leq r\leq k_i})^{\prod S_{k_i}}.$$

\noindent
$\bullet$
We say that $F\in \sW^{(n)}_{\overline{k}}$ \emph{satisfies the wheel conditions} if and only if
  $$F(\{x_{i,r}\})=0\ \mathrm{once}\ x_{i,r_1}-x_{i+\epsilon,l}=h+\epsilon\beta\
    \mathrm{and}\ x_{i+\epsilon,l}-x_{i,r_2}=h-\epsilon\beta\
    \mathrm{for\ some}\ \epsilon, i, r_1, r_2, l,$$
where $\epsilon\in \{\pm 1\}, i\in [n], 1\leq r_1,r_2\leq k_i, 1\leq l\leq k_{i+\epsilon}$,
and $h=h_2/2, \beta=(h_1-h_3)/2$ as before.

Let $W^{(n),>}_{\overline{k}}\subset \sW^{(n)}_{\overline{k}}$ be the subspace of all elements
$F$ satisfying the above two conditions. Set
  $W^{(n),>}:=\underset{\overline{k}\in \NN^{[n]}}\oplus W^{(n),>}_{\overline{k}}.$
It is easy to see that the subspace $W^{(n),>}\subset\sW^{(n)}$ is $\star$-closed.

\begin{defn}
The algebra $(W^{(n),>},\star)$ is called the additive shuffle algebra
(of $\widehat{\ssl}_n$-type).
\end{defn}

Recall the subalgebra $\Y^{(n),>}$ of $\Y^{(n)}_{h_1,h_2,h_3}$ generated by
$\{x^+_{i,r}\}_{i\in [n]}^{r\in \NN}$. We equip $\Y^{(n),>}$ with the
$\NN^{[n]}$--grading by assigning $\deg(x^+_{i,r})=1_i$.
The following beautiful result is due to A.~Negut:

\begin{thm}\cite{Neg2}\label{Negut theorem 2}
The assignment $x^+_{i,r}\mapsto x_{i,1}^r$ for $i\in [n], r\in \NN$,
gives rise to an $\NN^{[n]}$--graded $\CC$-algebra isomorphism
$\Xi_n\colon \Y^{(n),>}\iso W^{(n),>}$.
\end{thm}

We extend $W^{(n),>}$ to a larger algebra $W^{(n),\geq}$ by adjoining commuting
elements $\{\xi_{i,r}\}_{i\in [n]}^{r\in \NN}$ so that $\Xi_n$ extends to the homonymous
isomorphism $\Xi_n\colon \Y^{(n),\geq}\iso W^{(n),\geq}$ with $\Xi_n(\xi_{i,r})=\xi_{i,r}$.


\subsection{Shuffle realization of $\Phi^{\omega_{mn}}_{m,n}$}
\

Fix $m,n\geq 1$ and an $mn$-th root of unity $\omega_{mn}=\exp(2\pi \kk\mathbf{i}/mn)$
with $\kk\in \ZZ$ and $\gcd(\kk,n)=1$. First, let us introduce the corresponding
formal versions of the above shuffle algebras:

\medskip
\noindent
$\bullet$
The algebra $S^{(m),\omega_{mn},>}_{\hbar_1,\hbar_2}$ is a
$\CC[[\hbar_1,\hbar_2]]$-counterpart of $S^{(m),>}$ with the following modifications
  $q_1\rightsquigarrow \q_1=\omega_{mn}\exp(\hbar_1/m),
   q_2\rightsquigarrow \q_2=\exp(\hbar_2/m),
   q_3\rightsquigarrow \q_3=\omega^{-1}_{mn}\exp(-(\hbar_1+\hbar_2)/m)$.

\medskip
\noindent
$\bullet$
The algebra $\wt{W}^{(mn),>}_{\hbar_1,\hbar_2}$ is a $\CC[[\hbar_1,\hbar_2]]$-counterpart
of $W^{(mn),>}$ with $h_s\rightsquigarrow \frac{\hbar_s}{mn}, s\in \{1,2,3\}$.
Unlike $W^{(mn),>}$, the algebra $\wt{W}^{(mn),>}_{\hbar_1,\hbar_2}$ is $\ZZ$-graded by
the total degree with $\deg(x_{i,r})=\deg(\hbar_s)=1$. Let
$W^{(mn),>}_{\hbar_1,\hbar_2}\subset \wt{W}^{(mn),>}_{\hbar_1,\hbar_2}$
be the subspace of all elements  of non-negative degree. Adjoining commuting elements
$\{\xi_{i,r}\}_{i\in [mn]}^{r\in \NN}$ with $\deg(\xi_{i,r})=r$, we obtain
an extended version $W^{(mn),\geq}_{\hbar_1,\hbar_2}$.

Due to Theorems~\ref{Negut theorem 1} and~\ref{Negut theorem 2},
we have $\CC[[\hbar_1,\hbar_2]]$-algebra isomorphisms
 $$\Theta_m\colon \U^{(m),\omega_{mn},>}_{\hbar_1,\hbar_2}\iso S^{(m),\omega_{mn},>}_{\hbar_1,\hbar_2},\
   \Xi_{mn}\colon \Y^{(mn),>}_{\hbar_1,\hbar_2}\iso W^{(mn),>}_{\hbar_1,\hbar_2},\
   \Xi_{mn}\colon \Y^{(mn),\geq}_{\hbar_1,\hbar_2}\iso W^{(mn),\geq}_{\hbar_1,\hbar_2}.$$
We note that the former isomorphism is $\NN^{[n]}\times \ZZ$-graded,
while the latter two are $\NN^{[n]}\times \NN$-graded.

Let $\widehat{W}^{(mn),\geq}_{\hbar_1,\hbar_2}$ be the completion of
$W^{(mn),\geq}_{\hbar_1,\hbar_2}$ with respect to the above $\NN$-grading.
In what follows, $\left(\omega^{(m)}_{i,j}(z,w)\right)_{i,j\in [m]}$ and
$\left(\varpi^{(mn)}_{i',j'}(z,w)\right)_{i',j'\in [mn]}$ denote the
matrices~(\ref{omega matrix}) and ~(\ref{varpi matrix}) corresponding to
the algebras $S^{(m),\omega_{mn},>}_{\hbar_1,\hbar_2}$ and $W^{(mn),>}_{\hbar_1,\hbar_2}$,
respectively. Finally, we will use shorthand notations $F(x_{i_1},\ldots,x_{i_k})$ and
$F(x_{i'_1},\ldots,x_{i'_k})$ for shuffle elements (skipping double indices).

The following is the key result of this section:

\begin{thm}\label{shuffle homom}
(a) The assignment
\begin{equation}\label{shuffle map}
\begin{split}
  & F\left(\{x_{i_a}\}_{a=1}^k\right)\mapsto
  \sum_{i'_1\equiv i_1,\ldots, i'_k\equiv i_k}^{i'_1,\ldots,i'_k\in [mn]}
  g_{i'_1}(x_{i'_1})\cdots g_{i'_k}(x_{i'_k})\cdot B\left(\{x_{i'_a}\}_{a=1}^k\right)\cdot
  F\left(\{\omega_{mn}^{-i'_a}e^{nx_{i'_a}}\}_{a=1}^k\right)\\
  & \mathrm{with}\ B(x_{i'_1},\ldots,x_{i'_k}):=
  \left[\prod_{1\leq a\ne b\leq k}\frac{\varpi^{(mn)}_{i'_a,i'_b}(x_{i'_a},x_{i'_b})}
  {\omega^{(m)}_{i_a,i_b}(\omega_{mn}^{-i'_a}e^{nx_{i'_a}},\omega_{mn}^{-i'_b}e^{nx_{i'_b}})}\right]^{1/2}
\end{split}
\end{equation}
gives rise to a $\CC[[\hbar_1,\hbar_2]]$-algebra homomorphism
  $\Gamma^{\omega_{mn}}_{m,n}\colon S^{(m),\omega_{mn},>}_{\hbar_1,\hbar_2}
   \to \widehat{W}^{(mn),\geq}_{\hbar_1,\hbar_2}$.

\noindent
(b) The following diagram is commutative:
  $$\begin{CD}
    \U^{(m),\omega_{mn},>}_{\hbar_1,\hbar_2} @>{\Phi^{\omega_{mn}}_{m,n}}>> \widehat{\Y}^{(mn),\geq}_{\hbar_1,\hbar_2}\\
    @V{\Theta_m}V{\wr}V   @V{\wr}V{\Xi_{mn}}V\\
    S^{(m),\omega_{mn},>}_{\hbar_1,\hbar_2} @>>{\Gamma^{\omega_{mn}}_{m,n}}> \widehat{W}^{(mn),\geq}_{\hbar_1,\hbar_2}
    \end{CD}$$
\end{thm}

\begin{proof}[Proof of Theorem~\ref{shuffle homom}]
\

(a) First, we note that in the above product
$g_{i'_1}(x_{i'_1})\cdots g_{i'_k}(x_{i'_k})$ our convention is to take
all the variables $\{x_{i'_a}\}$ to the right of all the Cartan terms.
For $F\in S^{(m),\omega_{mn},>}_{\hbar_1,\hbar_2}$, we denote its image
under the assignment~(\ref{shuffle map}) by $\Gamma^{\omega_{mn}}_{m,n}(F)$.
The verification of the wheel conditions for $\Gamma^{\omega_{mn}}_{m,n}(F)$
is straightforward (they follow from the wheel conditions for $F$). Likewise,
the verification of the pole conditions for $\Gamma^{\omega_{mn}}_{m,n}(F)$ is straightforward
and follows from the explicit formulas~(\ref{omega matrix},\ref{varpi matrix}).

Our key computation is based on the following result:

\begin{lem}\label{last lemma}
For any $i,j\in [m]$ and $i',j'\in [mn]$ such that $i'\equiv i, j'\equiv j$, we have
\begin{equation*}
  \lambda^+_{i'}(u)(g_{j'}(v))=g_{j'}(v)\cdot
  \left(\frac{\varpi^{(mn)}_{j',i'}(v,u)}{\varpi^{(mn)}_{i',j'}(u,v)}\cdot
  \frac{\omega^{(m)}_{i,j}(\omega_{mn}^{-i'}e^{nu},\omega_{mn}^{-j'}e^{nv})}
       {\omega^{(m)}_{j,i}(\omega_{mn}^{-j'}e^{nv},\omega_{mn}^{-i'}e^{nu})}\right)^{1/2}.
\end{equation*}
\end{lem}

\begin{proof}[Proof of Lemma~\ref{last lemma}]
\

According to our proof of Proposition~\ref{compatibility with T2} and
the formulas~(\ref{lambda action on gg},\ref{ratio of p},\ref{ratio of g}), we have
\begin{equation*}
\begin{split}
  & \lambda^+_{i'}(u)(g_{j'}(v))=g_{j'}(v)\cdot \q^{a^{(m)}_{i,j}/2}\times\\
  & \left(\frac{u-v-\frac{\hbar_1}{mn}}{u-v+\frac{\hbar_3}{mn}}\right)^{\frac{\delta_{j',i'+1}}{2}}
  \left(\frac{u-v-\frac{\hbar_2}{mn}}{u-v+\frac{\hbar_2}{mn}}\right)^{\frac{\delta_{j',i'}}{2}}
  \left(\frac{u-v-\frac{\hbar_3}{mn}}{u-v+\frac{\hbar_1}{mn}}\right)^{\frac{\delta_{j',i'-1}}{2}}\times\\
  & \left(\frac{\omega_{mn}^{-i'}e^{nu}-\q_3^{-1}\omega_{mn}^{-j'}e^{nv}}{\omega_{mn}^{-i'}e^{nu}-\q_1\omega_{mn}^{-j'}e^{nv}}\right)^{\frac{\delta_{j,i+1}}{2}}
  \left(\frac{\omega_{mn}^{-i'}e^{nu}-\q_2^{-1}\omega_{mn}^{-j'}e^{nv}}{\omega_{mn}^{-i'}e^{nu}-\q_2\omega_{mn}^{-j'}e^{nv}}\right)^{\frac{\delta_{j,i}}{2}}
  \left(\frac{\omega_{mn}^{-i'}e^{nu}-\q_1^{-1}\omega_{mn}^{-j'}e^{nv}}{\omega_{mn}^{-i'}e^{nu}-\q_3\omega_{mn}^{-j'}e^{nv}}\right)^{\frac{\delta_{j,i-1}}{2}}.
\end{split}
\end{equation*}
The result now follows by recalling the explicit formulas~(\ref{omega matrix})
and~(\ref{varpi matrix}).
\end{proof}

Using Lemma~\ref{last lemma}, we can finally verify that the assignment
$F\mapsto \Gamma^{\omega_{mn}}_{m,n}(F)$ is an algebra homomorphism. For
  $F(\{x_{i_a}\}_{a=1}^k), G(\{x_{i_b}\}_{b=k+1}^{k+l})\in S^{(m),\omega_{mn},>}_{\hbar_1,\hbar_2}$,
the following holds:
\begin{equation*}
\begin{split}
  & \Gamma^{\omega_{mn}}_{m,n}(F)\star \Gamma^{\omega_{mn}}_{m,n}(G)=\\
  & \left(\sum_{i'_1\equiv i_1,\ldots, i'_k\equiv i_k}^{i'_1,\ldots,i'_k\in [mn]}
        g_{i'_1}(x_{i'_1})\cdots g_{i'_k}(x_{i'_k})\cdot B(\{x_{i'_a}\}_{a=1}^k)\cdot F\left(\{\omega_{mn}^{-i'_a}e^{nx_{i'_a}}\}_{a=1}^k\right)\right)\star\\
  & \left(\sum_{i'_{k+1}\equiv i_{k+1},\ldots, i'_{k+l}\equiv i_{k+l}}^{i'_{k+1},\ldots,i'_{k+l}\in [mn]}
        g_{i'_{k+1}}(x_{i'_{k+1}})\cdots g_{i'_{k+l}}(x_{i'_{k+l}})\cdot B(\{x_{i'_b}\}_{b=k+1}^{k+l})\cdot G\left(\{\omega_{mn}^{-i'_b}e^{nx_{i'_b}}\}_{b=k+1}^{k+l}\right)\right)=\\
  & \sum_{i'_1\equiv i_1,\ldots,i'_{k+l}\equiv i_{k+l}}^{i'_1,\ldots,i'_{k+l}\in [mn]}
  \left(g_{i'_1}(x_{i'_1})\cdots g_{i'_k}(x_{i'_k})g_{i'_{k+1}}(x_{i'_{k+1}})\cdots g_{i'_{k+l}}(x_{i'_{k+l}})\times\right.\\
  & \left.\Sym\left(F\left(\{\omega_{mn}^{-i'_a}e^{nx_{i'_a}}\}_{a=1}^k\right) G\left(\{\omega_{mn}^{-i'_b}e^{nx_{i'_b}}\}_{b=k+1}^{k+l}\right)
  B(\{x_{i'_a}\}_{a=1}^k) B(\{x_{i'_b}\}_{b=k+1}^{k+l})\times\right.\right.\\
  & \left.\left. \prod_{1\leq a\leq k}^{k<b\leq k+l} \varpi^{(mn)}_{i'_a,i'_b}(x_{i'_a},x_{i'_b})\cdot
  \prod_{1\leq a\leq k}^{k<b\leq k+l}
  \left(\frac{\varpi^{(mn)}_{i'_b,i'_a}(x_{i'_b},x_{i'_a})}{\varpi^{(mn)}_{i'_a,i'_b}(x_{i'_a},x_{i'_b})}\cdot
  \frac{\omega^{(m)}_{i_a,i_b}(\omega_{mn}^{-i'_a}e^{nx_{i'_a}},\omega_{mn}^{-i'_b}e^{nx_{i'_b}})}
  {\omega^{(m)}_{i_b,i_a}(\omega_{mn}^{-i'_b}e^{nx_{i'_b}},\omega_{mn}^{-i'_a}e^{nx_{i'_a}})}\right)^{1/2}\right)\right)=\\
  & \sum_{i'_1\equiv i_1,\ldots,i'_{k+l}\equiv i_{k+l}}^{i'_1,\ldots,i'_{k+l}\in [mn]}
  \left(g_{i'_1}(x_{i'_1})\cdots g_{i'_{k+l}}(x_{i'_{k+l}})
  \Sym\left(F\left(\{\omega_{mn}^{-i'_a}e^{nx_{i'_a}}\}_{a=1}^k\right) G\left(\{\omega_{mn}^{-i'_b}e^{nx_{i'_b}}\}_{b=k+1}^{k+l}\right)\times\right.\right.\\
  & \left.\left.\prod_{1\leq a,b\leq k+l}^{a\ne b} \left(\frac{\varpi^{(mn)}_{i'_a,i'_b}(x_{i'_a},x_{i'_b})}
  {\omega^{(m)}_{i_a,i_b}(\omega_{mn}^{-i'_a}e^{nx_{i'_a}},\omega_{mn}^{-i'_b}e^{nx_{i'_b}})}\right)^{1/2}\cdot
  \prod_{1\leq a\leq k}^{k<b\leq k+l} \omega^{(m)}_{i_a,i_b}(\omega_{mn}^{-i'_a}e^{nx_{i'_a}},\omega_{mn}^{-i'_b}e^{nx_{i'_b}})\right)\right)=\\
  & \Gamma^{\omega_{mn}}_{m,n}(F\star G).
\end{split}
\end{equation*}
This completes our proof of Theorem~\ref{shuffle homom}(a).

(b) For any $i\in [m]$ and $k\in \ZZ$, we have
\begin{equation*}
\begin{split}
  & \Gamma^{\omega_{mn}}_{m,n}(\Theta_m(e_{i,k}))=\Gamma^{\omega_{mn}}_{m,n}(x_{i,1}^k)=
  \sum_{i'\equiv i}^{i'\in [mn]} g_{i'}(x_{i'})\omega_{mn}^{-ki'}e^{knx_{i'}}=\\
  & \Xi_{mn}\left(\sum_{i'\equiv i}^{i'\in [mn]} \omega_{mn}^{-ki'}e^{kn\sigma^+_{i'}}g_{i'}(\sigma^+_{i'})x^+_{i',0}\right)=
  \Xi_{mn}(\Phi^{\omega_{mn}}_{m,n}(e_{i,k})).
\end{split}
\end{equation*}
This implies part (b), since $\U^{(m),\omega_{mn},>}_{\hbar_1,\hbar_2}$
is generated by $\{e_{i,k}\}_{i\in [m]}^{k\in \ZZ}$.
\end{proof}


\subsection{Second proof of Theorem~\ref{main 1}}
\

As an immediate corollary of Theorem~\ref{shuffle homom}, we see that
the assignment $e_{i,k}\mapsto \Phi^{\omega_{mn}}_{m,n}(e_{i,k})$ defined
by~(\ref{Phi2}) is compatible with the relations~(T2,T6). Similar arguments
also prove the compatibility of the assignment
$f_{i,k}\mapsto \Phi^{\omega_{mn}}_{m,n}(f_{i,k})$ defined by~(\ref{Phi3})
with the relations~(T3,T6). Combining this with the verifications of
Section~\ref{section partial proof} completes our direct proof of Theorem~\ref{main 1}.

\begin{rem}
Informally speaking, the most complicated (Serre) defining relations~(T6,Y5)
are getting replaced by rather simple wheel conditions in the corresponding shuffle algebras.
\end{rem}


\end{document}